\definecolor{red}{rgb}{1,0,0}
\definecolor{blue}{rgb}{0,0,1}
\definecolor{green}{rgb}{0,.6,0}
\pgfplotsset{width=7cm,compat=1.13}
\newtheorem{thm}{Theorem}[section]
\newtheorem{cor}[thm]{Corollary}
\newtheorem{prop}[thm]{Proposition}
\newtheorem{conj}[thm]{Conjecture}
\newtheorem{obs}[thm]{Observation}
\theoremstyle{definition}
\newtheorem{rem}[thm]{Remark}
\theoremstyle{definition}
\newtheorem{defn}[thm]{Definition}
\theoremstyle{definition}
\newtheorem{ex}[thm]{Example}
\numberwithin{figure}{section}
\newcommand{\bit}{\begin{itemize}}
\newcommand{\eit}{\end{itemize}}
\newcommand{\ben}{\begin{enumerate}}
\newcommand{\een}{\end{enumerate}}
\newcommand{\beq}{\begin{equation}}
\newcommand{\eeq}{\end{equation}}
\newcommand{\bea}{\begin{eqnarray*}} 
\newcommand{\eea}{\end{eqnarray*}}
\newcommand{\bpf}{\begin{proof}}
\newcommand{\epf}{\end{proof}\ms}
\newcommand{\bmt}{\begin{bmatrix}}
\newcommand{\emt}{\end{bmatrix}}
\newcommand{\ms}{\medskip}
\title{Techniques for determining equality of the maximum nullity and the zero forcing number of a graph }
\author{Derek Young}
\begin{document}
 
\maketitle
\begin{abstract} 
It is known that the zero forcing number of a graph is an upper bound for the
maximum nullity of the graph (see \cite{MR2388646}).  In this paper, we search
for characteristics of a graph that guarantee the maximum nullity of the graph
and the zero forcing number of the graph are the same by studying a variety of
graph parameters which bound the maximum nullity of a graph below. In
particular, we introduce a new graph parameter which acts as a lower bound for
the maximum nullity of the graph. As a result, we show that the Aztec Diamond
graph's maximum nullity and zero forcing number are the same.  Other graph
parameters that are considered are a Colin de Verdi\'ere type parameter and the
vertex connectivity. We also use matrices, such as a divisor matrix of a graph
and an equitable partition of the adjacency matrix of a graph, to establish a
lower bound for the nullity of the graph's adjacency matrix. 
\bigskip
\end{abstract}

{\bf Keywords} maximum nullity, zero forcing number, minimum rank, equitable
partition, equitable decomposition, extended cube graph, circulant graph,
strong Arnold property, nullity of a graph, aztec diamond graph.


\section{Introduction}
\label{sec:introduction}
The maximum nullity over a set of matrices that can be
described by a graph has been well studied (see \cite{MR2388646, MR3010007,MR2350678, MR3013937}).
While determining the maximum nullity over a set of matrices described
by a graph is not easy to compute, there are graph parameters that allow us to
bound the maximum nullity. For some graphs, these bounds are enough to determine
the maximum nullity. Unfortunately, the bounds available are not enough
to determine the maximum nullity for all graphs. 
\bigskip

The zero forcing number, described in detail below, is an upper bound for
maximum nullity. The problem of characterizing graphs that have the property
that the maximum nullity of the graph is equal to zero forcing number of the
graph was first posed in \cite{MR2388646}. While this problem is still open,
there are many families of graphs that have their maximum nullity equal to
their zero forcing number. A list of families of graphs having this property
can be found in \cite{MRcatalog} including trees, cycles, complete graphs,
complete bipartite graphs, completely subdivided graphs, and graphs with less
than $ 8 $ vertices.  The zero forcing number of a graph can be computed by
using mathematical software.  However, determining the maximum nullity of a
graph is a challenging problem. 
\bigskip

A \textit{graph}, denoted by $ G $, consists of a set $ V(G) $ called a
\textit{vertex set} and an \textit{edge set} $ E(G)  $ where the edge set
contains two element subsets of the vertex set.  For convenience, when $
\{v,u\} \in E(G) $ we may drop the brackets and write $ vu $. The
\textit{order} of a graph, denoted by $ |G| $, is the number of vertices in the
graph.   The spectrum of a symmetric matrix $ A $, denoted by $
\operatorname{spec}(A) $, is the multiset of eigenvalues of $ A $. The nullity
of a symmetric matrix, denoted by $ \operatorname{null}(A) $, is the number of
times zero occurs in $ \operatorname{spec}(A) $.  The rank of a symmetric
matrix $ A $, denoted by $ \operatorname{rank}(A) $, is the dimension of the
vector space spanned by the rows of $ A $. 
\bigskip

The \textit{set of symmetric matrices of a graph $ G $ over a field $
\mathcal{F} $}, denoted by $ \mathcal{S}(\mathcal{F}, G) $, is the set of
symmetric matrices $ A = [a_{ij}] $ having the same off-diagonal nonzero
pattern as the adjacency matrix of $ G $ (for $i \neq j,  a_{ij} \neq 0 \iff ij
\in E(G) $) with free diagonal entries ($ a_{ii} \in \mathcal{F} $). The
adjacency matrix of a graph $ G $, denoted by $ A(G) $, is the matrix in
$ \mathcal{S}(\mathbb{R},G) $ with $ a_{ii} = 0 $ and $ a_{ij} = 1$ for $ i
\neq j $. The \textit{maximum nullity of a graph $ G $ over a field $
\mathcal{F} $}, denoted by $ \operatorname{M}(\mathcal{F},G) $ is the maximum
nullity over $ \mathcal{S}(\mathcal{F},G) $.  Let $ A \in
\mathcal{S}(\mathbb{R}, G) $.  Because the diagonal of a matrix $ A \in
\mathcal{S}(\mathbb{R},G) $ is unrestricted, all eigenvalues of $ A $ are real, the
algebraic and geometric multiplicity of $ A $ are equal, and the nullity of $ A -
\lambda I $ is the multiplicity of $ \lambda $ as an eigenvalue of $ A $.
Thus, the maximum multiplicity over matrices in $ \mathcal{S}(\mathbb{R},G) $ is
the same as the maximum nullity over $ \mathcal{S}(\mathbb{R},G) $.  The
\textit{minimum rank of a graph $ G $ over a field $ \mathcal{F} $}, denoted by
$ \operatorname{mr}(\mathcal{F},G) $, is the minimum rank over $
\mathcal{S}(\mathcal{F},G) $. Whenever the field is not specified, the field is
understood to be the real numbers $ \mathbb{R} $. Observe that $
\operatorname{mr}(\mathcal{F}, G) + \operatorname{M}(\mathcal{F},G) = |G|$,
where $ |G| $ is defined to be the number of vertices in the graph $ G $. This
makes solving for $ \operatorname{M}(\mathcal{F},G) $ equivalent to solving the
associated minimum rank problem.  See \cite{MR2350678} for a discussion on the
motivation of the minimum rank problem. 
\bigskip

Let $ Z $ be a subset of $ V(G) $ such that every vertex in $ Z $ is colored
blue and all other vertices are colored white.  The \textit{color
change rule} for zero forcing is: A blue vertex can change a
white vertex blue if the white vertex is the only white vertex adjacent to the
blue vertex. (Vertices $ v $
and $ u $ are said to be \textit{adjacent} if and only if $ \{v,u\} \in E(G)
$.) In this case, we say that the blue vertex forced the white vertex
blue.  A \textit{zero forcing set} is a subset of $ V(G) $ such that after
applying the color change rule until no more changes are possible, all vertices
in $ G $ are colored blue. The \textit{zero forcing number} of a graph $ G $,
denoted by $ \operatorname{Z}(G) $, is the minimum cardinality over all zero
forcing sets. A \textit{chronological list of forces} is a sequence of forces
performed in the given order. The term zero forcing refers to forcing entries
in the null vector to be zero, which leads to the relationship that the maximum nullity of a graph is bounded above by the zero forcing number of the graph.  

\begin{prop}{\rm\cite[Proposition 2.4]{MR2388646}}
\label{prop:M_is_less_than_Z_over_any_field}
Let $ G $ be a graph and let $ \mathcal{F} $ be a field. Then 
\[
\operatorname{M}(\mathcal{F},G) \leq \operatorname{Z}(G).
\]
\end{prop}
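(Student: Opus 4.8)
The plan is to show that for an arbitrary $A \in \mathcal{S}(\mathcal{F},G)$ and an arbitrary zero forcing set $Z$ of $G$ one has $\operatorname{null}(A) \le |Z|$; taking $Z$ of minimum cardinality and then maximizing over $A \in \mathcal{S}(\mathcal{F},G)$ yields the stated inequality. The underlying idea is exactly the one hinted at in the text: the color change rule tracks how a vanishing coordinate of a null vector propagates to its neighbors, so a zero forcing set ``pins down'' every null vector.

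First I would fix $A = [a_{ij}] \in \mathcal{S}(\mathcal{F},G)$ and a zero forcing set $Z \subseteq V(G)$, together with a chronological list of forces starting from $Z$ that colors all of $V(G)$ blue. Consider the linear restriction map $\rho \colon \ker A \to \mathcal{F}^{Z}$ sending a null vector $x$ to the tuple of its coordinates indexed by $Z$. The core claim is that $\rho$ is injective, i.e.\ the only $x$ with $Ax = 0$ and $x_v = 0$ for all $v \in Z$ is $x = 0$.

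To prove the claim, let $B_0 = Z$ and let $B_t$ be the set of blue vertices after the first $t$ forces in the chronological list, and induct on $t$ to show $x_v = 0$ for all $v \in B_t$. The base case $t = 0$ is the hypothesis on $x$. For the inductive step, the $(t+1)$-st force has some $u \in B_t$ forcing a vertex $w \notin B_t$; by the color change rule $w$ is the unique white neighbor of $u$, so every neighbor of $u$ other than $w$ already lies in $B_t$, and $u \in B_t$ as well. Reading off the $u$-th coordinate of $Ax = 0$ and applying the inductive hypothesis ($x_u = 0$ and $x_j = 0$ for every neighbor $j \neq w$ of $u$) gives
\[
0 = (Ax)_u = a_{uu} x_u + \sum_{j \sim u} a_{uj}\, x_j = a_{uw}\, x_w .
\]
Since $uw \in E(G)$ we have $a_{uw} \neq 0$, and $\mathcal{F}$ is a field, so $x_w = 0$; hence $x$ vanishes on $B_{t+1} = B_t \cup \{w\}$. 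Because $Z$ is a zero forcing set, $B_T = V(G)$ for some finite $T$, so $x = 0$, establishing injectivity of $\rho$.

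From injectivity of $\rho$ we get $\operatorname{null}(A) = \dim \ker A \le |Z|$. Choosing $Z$ to be a minimum zero forcing set gives $\operatorname{null}(A) \le \operatorname{Z}(G)$, and since $A \in \mathcal{S}(\mathcal{F},G)$ was arbitrary, $\operatorname{M}(\mathcal{F},G) \le \operatorname{Z}(G)$. The only delicate point is the inductive step: one must use that at the instant $u$ forces $w$ every other neighbor of $u$ is already blue — which is precisely the color change rule — so that the $u$-th equation collapses to the single term $a_{uw} x_w$. Beyond correctly bookkeeping the chronological list of forces, no real obstacle arises.
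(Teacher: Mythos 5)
Your proof is correct and is essentially the standard argument for this result (the paper itself only cites \cite{MR2388646} rather than reproving it): a null vector vanishing on a zero forcing set is forced to vanish everywhere, since at each force the $u$-th equation of $Ax=0$ collapses to $a_{uw}x_w=0$ with $a_{uw}\neq 0$, so the restriction map $\ker A\to\mathcal{F}^{Z}$ is injective and $\operatorname{null}(A)\leq|Z|$. No gaps; this is the same propagation-of-zeros approach as the original proof.
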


Note that $A(G) - \lambda I$ with $\lambda \in \mathbb{Z}$ can be viewed as a
matrix over any field $\mathcal{F}$. When $ \mathcal{F} $ is of characteristic
$p$, each integer interpreted as its residue modulo $p$. Thus $A(G) - \lambda I
\in \mathcal{S}(\mathcal{F},G)$. When we view $ A \in \mathcal{F}^{n x n} $ we
write $ \operatorname{rank}(\mathcal{F},A) $ for the rank which may depend on $
\mathcal{F} $. An \textit{optimal matrix over a field} $ \mathcal{F} $ is a
matrix $ A \in \mathcal{S}(G) $ such that $\operatorname{rank}(\mathcal{F},A) =
\operatorname{mr}(\mathcal{F},G) $. We say that an integer matrix $ A \in
\mathcal{S}(\mathcal{F},G) $ that has entries $ -1,0,1 $ on the off diagonal is
\textit{universally optimal}  if for all fields $ \mathcal{F} $,
$\operatorname{rank}(\mathcal{F},A) = \operatorname{mr}(\mathcal{F},G) $. The
minimum rank of a graph $ G $ is said to be \textit{field independent} if for
all fields $\mathcal{F}$, $ \operatorname{mr}(\mathcal{F},G) =
\operatorname{mr}(G) $. The minimum rank problem over fields other than the
real numbers was studied as early as 2004 by Wayne Barrett, Hein van der Holst,
and Raphael Loewy in \cite{MR2111528}. In 2009, DeAlba, et. al \cite{MR2530143}
used universally optimal matrices to establish minimum rank field independence
for many graphs listed in \cite{MRcatalog}.  

\begin{prop}{\rm\cite[Corollary 2.3]{MR2530143}}
\label{prop:field_rank_less_than_real_rank}
If $ A \in \mathbb{Z}^{n \times n} $, then $
\operatorname{rank}(\mathbb{Z}_p , A) \leq \operatorname{rank}(A) $ for every prime $ p $.
\end{prop}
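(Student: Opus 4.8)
The plan is to use the classical characterization of the rank of a matrix over a field in terms of the sizes of its nonvanishing minors, together with the fact that taking determinants commutes with the reduction homomorphism $\mathbb{Z} \to \mathbb{Z}_p$.

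First, set $r = \operatorname{rank}(A)$, computed over $\mathbb{R}$ (equivalently over $\mathbb{Q}$, since $A$ has integer entries and rank is unchanged under field extension). Recall that over an arbitrary field $\mathcal{F}$ the rank of a matrix equals the largest integer $k$ for which some $k \times k$ submatrix has nonzero determinant. In particular, every $(r+1) \times (r+1)$ submatrix of $A$ has determinant $0$ in $\mathbb{Q}$; since each such determinant is an integer, it is $0$ in $\mathbb{Z}$, hence reduces to $0$ in $\mathbb{Z}_p$ for every prime $p$.

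Next, let $\overline{A}$ denote the matrix over $\mathbb{Z}_p$ obtained by reducing each entry of $A$ modulo $p$, so that $\operatorname{rank}(\mathbb{Z}_p, A) = \operatorname{rank}(\overline{A})$. Because the reduction map $\mathbb{Z} \to \mathbb{Z}_p$ is a ring homomorphism and the determinant is a polynomial with integer coefficients in the matrix entries, the determinant of any $k \times k$ submatrix of $\overline{A}$ is the image modulo $p$ of the determinant of the corresponding submatrix of $A$. Combining this with the previous paragraph, every $(r+1) \times (r+1)$ submatrix of $\overline{A}$ has determinant $0$ in $\mathbb{Z}_p$, so the minor characterization of rank applied over the field $\mathbb{Z}_p$ gives $\operatorname{rank}(\mathbb{Z}_p, A) = \operatorname{rank}(\overline{A}) \leq r = \operatorname{rank}(A)$.

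There is essentially no hard step here; the only points requiring care are to invoke the subdeterminant description of rank over a general field rather than Gaussian elimination over $\mathbb{Q}$ (which could introduce denominators and obscure the reduction mod $p$), and to record explicitly that determinants are compatible with the homomorphism $\mathbb{Z} \to \mathbb{Z}_p$. One might also remark that the inequality can be strict — for instance the diagonal matrix with entries $1$ and $p$ has rank $2$ over $\mathbb{Q}$ but rank $1$ over $\mathbb{Z}_p$ — which is exactly the phenomenon that makes field‑dependence of the minimum rank possible.
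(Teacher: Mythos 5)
Your argument is correct: the characterization of rank over any field as the size of the largest nonvanishing minor, combined with the fact that determinants commute with the reduction homomorphism $\mathbb{Z} \to \mathbb{Z}_p$, gives the inequality immediately, and your example $\operatorname{diag}(1,p)$ correctly shows it can be strict. Note that the paper itself offers no proof of this statement --- it is quoted as Corollary 2.3 of the cited reference \cite{MR2530143} --- so there is nothing to compare against; your minor-based argument is the standard one and is sound.
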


\begin{cor}
\label{cor:M_equal_Z_field_independence}
Let $ G $ be a graph having the property that for some $ \lambda \in
\mathbb{Z} $, $\operatorname{rank}(A(G)-\lambda I) =|G| -  \operatorname{Z}(G)
$, or equivalently, $ \operatorname{null}(A(G) - \lambda I) =
\operatorname{Z}(G). $
Then the minimum rank of $ G $ is field independent and $ A(G) - \lambda I $
is universally optimal, and $ \operatorname{M}(\mathcal{F},G) =
\operatorname{Z}(G) $ for all fields $ \mathcal{F} $.
\end{cor}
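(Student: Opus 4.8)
The plan is to run a short squeeze argument: use the hypothesis over $\mathbb{R}$ to pin down $\operatorname{mr}(G)$, then push the single matrix $A(G)-\lambda I$ down to every field. Write $n=|G|$ and $A=A(G)-\lambda I$. Since $\lambda\in\mathbb{Z}$, the matrix $A$ is an integer matrix whose off-diagonal entries are $0$ and $1$; subtracting $\lambda I$ alters only the (free) diagonal, and $1\neq 0$ in every field, so $A\in\mathcal{S}(\mathcal{F},G)$ for every field $\mathcal{F}$, and $A$ has off-diagonal entries in $\{-1,0,1\}$. The ``equivalently'' in the statement is just rank--nullity, $\operatorname{rank}(A)+\operatorname{null}(A)=n$, so I may use either form of the hypothesis.

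First I would settle the real case. Because $A\in\mathcal{S}(\mathbb{R},G)$, the hypothesis gives $\operatorname{mr}(G)\le\operatorname{rank}(A)=n-\operatorname{Z}(G)$, while Proposition~\ref{prop:M_is_less_than_Z_over_any_field} gives $\operatorname{M}(G)\le\operatorname{Z}(G)$, i.e.\ $\operatorname{mr}(G)=n-\operatorname{M}(G)\ge n-\operatorname{Z}(G)$. Hence $\operatorname{mr}(G)=n-\operatorname{Z}(G)$, $\operatorname{M}(G)=\operatorname{Z}(G)$, and $A$ is optimal over $\mathbb{R}$.

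Next, fix an arbitrary field $\mathcal{F}$ and bound $\operatorname{rank}(\mathcal{F},A)$ above by $\operatorname{rank}(A)$. The rank of $A$ over any field is the largest size of a nonvanishing minor, and each minor of $A$ is an integer. If $\operatorname{char}\mathcal{F}=0$, then $\mathcal{F}\supseteq\mathbb{Q}$ and an integer vanishes in $\mathcal{F}$ exactly when it vanishes in $\mathbb{R}$, so $\operatorname{rank}(\mathcal{F},A)=\operatorname{rank}(A)$. If $\operatorname{char}\mathcal{F}=p$, then $\mathcal{F}$ contains the prime field $\mathbb{Z}_p$, rank is unchanged under the extension $\mathbb{Z}_p\subseteq\mathcal{F}$, and Proposition~\ref{prop:field_rank_less_than_real_rank} gives $\operatorname{rank}(\mathcal{F},A)=\operatorname{rank}(\mathbb{Z}_p,A)\le\operatorname{rank}(A)$. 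Either way $\operatorname{rank}(\mathcal{F},A)\le\operatorname{rank}(A)=n-\operatorname{Z}(G)$. Now close the squeeze over $\mathcal{F}$: since $A\in\mathcal{S}(\mathcal{F},G)$ we get $\operatorname{mr}(\mathcal{F},G)\le\operatorname{rank}(\mathcal{F},A)\le n-\operatorname{Z}(G)$, while Proposition~\ref{prop:M_is_less_than_Z_over_any_field} gives $\operatorname{mr}(\mathcal{F},G)=n-\operatorname{M}(\mathcal{F},G)\ge n-\operatorname{Z}(G)$. Therefore $\operatorname{mr}(\mathcal{F},G)=n-\operatorname{Z}(G)=\operatorname{mr}(G)$ for every $\mathcal{F}$, which is field independence; the chain of inequalities collapses to $\operatorname{rank}(\mathcal{F},A)=\operatorname{mr}(\mathcal{F},G)$, so $A$ is universally optimal; and $\operatorname{M}(\mathcal{F},G)=n-\operatorname{mr}(\mathcal{F},G)=\operatorname{Z}(G)$.

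The only step with any content is the comparison $\operatorname{rank}(\mathcal{F},A)\le\operatorname{rank}(A)$, and that is immediate from Proposition~\ref{prop:field_rank_less_than_real_rank} together with the fact that rank does not change under field extension, plus the trivial characteristic-$0$ case; everything else is bookkeeping with $\operatorname{mr}+\operatorname{M}=n$. So I do not anticipate a genuine obstacle. The one thing to be careful about is not to overlook fields of characteristic $0$ other than $\mathbb{R}$, since Proposition~\ref{prop:field_rank_less_than_real_rank} is stated only for $\mathbb{Z}_p$.
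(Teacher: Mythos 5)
Your proof is correct and follows essentially the same squeeze argument as the paper: bound $\operatorname{mr}(\mathcal{F},G)$ above by $\operatorname{rank}(\mathcal{F},A(G)-\lambda I)\le\operatorname{rank}(A(G)-\lambda I)=|G|-\operatorname{Z}(G)$ and below by $|G|-\operatorname{Z}(G)$ via Proposition~\ref{prop:M_is_less_than_Z_over_any_field}. Your explicit handling of characteristic-$0$ fields and of the reduction to the prime field is a small but welcome refinement, since Proposition~\ref{prop:field_rank_less_than_real_rank} is stated only for $\mathbb{Z}_p$ while the paper applies it to arbitrary $\mathcal{F}$ without comment.
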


\begin{proof}
\label{pf:M_equal_Z_field_independence}
By \hyperref[prop:M_is_less_than_Z_over_any_field]{Proposition
\ref{prop:M_is_less_than_Z_over_any_field}}, $ |G| -
\operatorname{mr}(\mathcal{F},G) = \operatorname{M}(\mathcal{F},G) \leq
\operatorname{Z}(G) $ and by
\hyperref[prop:field_rank_less_than_real_rank]{Proposition \ref{prop:field_rank_less_than_real_rank}} we have 
$ \operatorname{rank}(\mathcal{F},A(G) - \lambda I ) \leq
\operatorname{rank}(A(G) - \lambda I ) $, so $ \operatorname{null}(A(G) -\lambda I) \leq
\operatorname{null}(\mathcal{F}, A(G) - \lambda I)$. It follows that 

\[
\operatorname{Z}(G) \geq \operatorname{M}(\mathcal{F}, G) \geq
\operatorname{null}(\mathcal{F}, A(G) - \lambda I) \geq
\operatorname{null}(A(G) - \lambda I) = \operatorname{Z}(G).
\]

Therefore, $ \operatorname{mr}(\mathcal{F},G) = 
\operatorname{rank}(\mathcal{F},A(G) + \lambda I) = |G|- \operatorname{Z}(G) $
which shows that $ G $ has field independent minimum rank and $ A(G) + \lambda
I $ is universally optimal.
\end{proof}

\begin{obs}
\label{obs:not_field_independent}
Let $ G $ be a graph.  If there exists a prime $ p $ such that $
\operatorname{mr}(\mathbb{Z}_p,G) \neq \operatorname{mr}(G)$ then $ G $ does
not have field independent minimum rank.
\end{obs}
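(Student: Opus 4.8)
The plan is to argue the contrapositive directly from the definition of field independence. Recall that, by the definition given above, the minimum rank of $G$ is \emph{field independent} precisely when $\operatorname{mr}(\mathcal{F},G)=\operatorname{mr}(G)$ holds simultaneously for \emph{every} field $\mathcal{F}$. Since $\mathbb{Z}_p$ is a field whenever $p$ is prime, a single prime $p$ witnessing $\operatorname{mr}(\mathbb{Z}_p,G)\neq\operatorname{mr}(G)$ already violates this universally quantified condition, so field independence must fail.

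In more detail, the first step is to observe that $\mathcal{S}(\mathbb{Z}_p,G)$ is nonempty, so that $\operatorname{mr}(\mathbb{Z}_p,G)$ is a well-defined nonnegative integer: for instance, $A(G)$ read modulo $p$ lies in $\mathcal{S}(\mathbb{Z}_p,G)$, because each off-diagonal entry $1$ remains nonzero in $\mathbb{Z}_p$ and each zero off-diagonal entry stays zero. The second step is the contradiction: assuming that $G$ has field independent minimum rank, one instantiates the defining condition at $\mathcal{F}=\mathbb{Z}_p$ to conclude $\operatorname{mr}(\mathbb{Z}_p,G)=\operatorname{mr}(G)$, contradicting the hypothesis. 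Hence no graph with a prime exhibiting this inequality can be field independent.

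There is essentially no obstacle in the argument itself; the interest of the observation lies in its use as a practical certificate rather than in its proof. In conjunction with \hyperref[prop:field_rank_less_than_real_rank]{Proposition \ref{prop:field_rank_less_than_real_rank}} (which makes the mod-$p$ rank of an explicit integer matrix in $\mathcal{S}(G)$ directly comparable to its real rank), it reduces the task of ruling out field independence to exhibiting a single prime $p$ over which some concrete matrix in $\mathcal{S}(G)$ behaves differently from the real optimum, instead of having to reason about all fields at once.
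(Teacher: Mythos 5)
Your argument is correct and coincides with the paper's own (implicit) reasoning: the observation follows immediately from the definition of field independence as a universally quantified condition over all fields, which $\mathbb{Z}_p$ witnesses failing, and the paper accordingly states it without proof. Your added remark that $\mathcal{S}(\mathbb{Z}_p,G)$ is nonempty (via $A(G)$ reduced mod $p$) is a harmless extra check, not a new idea.
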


A \textit{generalized Petersen Graph}, denoted by $ P(n,k) $, is a graph having
a labeled vertex set $ \{ u_0, u_1, \dots u_{n-1}, v_0, v_1, \dots, v_{n-1} \}
$ and edge set $ \big\{\{u_iu_{i+1 \bmod n }\}, \{v_iv_{i+k \bmod n }\},
\{u_iv_i\} : i = 0,1,2,\\\dots,n-1 \big\}$, for $ n \geq 3 $ and $ k $ a positive
integer less than $ \lfloor \frac{n}{2} \rfloor $. In \cite{MR3760976}, the
adjacency matrix was used to show that the maximum nullity is equal to the zero
forcing number for certain generalized Petersen graphs.

\begin{thm}{\rm\cite[Theorem 2.4]{MR3760976}}
\label{thm:GP_M_equal_Z}
Let $ r $ be a positive integer. Then 
\[
\operatorname{M}(P(15r,2))= \operatorname{Z}(P(15r,2)) = 6 \quad \text{and}
\quad \operatorname{M}(P(24r,5))= \operatorname{Z}(P(24r,5)) = 12
\]
and the maximum nullity is attained by the adjacency matrix.
\end{thm}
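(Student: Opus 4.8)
The plan is to bracket the maximum nullity between the zero forcing number (an upper bound, by Proposition~\ref{prop:M_is_less_than_Z_over_any_field}) and the nullity of one explicitly chosen matrix of $\mathcal{S}(G)$ (a lower bound). For $G=P(15r,2)$ the extremal matrix is $A(G)+2I$, that is $A(G)-\lambda I$ with $\lambda=-2$, rather than $A(G)$ itself; for $G=P(24r,5)$ it is $A(G)$. I will prove
\[
\operatorname{null}\bigl(A(P(15r,2))+2I\bigr)=6,\qquad \operatorname{null}\bigl(A(P(24r,5))\bigr)=12,
\]
and
\[
\operatorname{Z}(P(15r,2))\le 6,\qquad \operatorname{Z}(P(24r,5))\le 12 .
\]
Since $A(G)-\lambda I\in\mathcal{S}(G)$ for every integer $\lambda$, the first pair of identities gives $\operatorname{M}(P(15r,2))\ge 6$ and $\operatorname{M}(P(24r,5))\ge 12$; combined with Proposition~\ref{prop:M_is_less_than_Z_over_any_field} this yields $6\le\operatorname{M}(P(15r,2))\le\operatorname{Z}(P(15r,2))\le 6$ and $12\le\operatorname{M}(P(24r,5))\le\operatorname{Z}(P(24r,5))\le 12$, which is the theorem (the extremal matrix being, up to a harmless integer diagonal shift, the adjacency matrix). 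Corollary~\ref{cor:M_equal_Z_field_independence} then also gives field independence, with $A(P(15r,2))+2I$ and $A(P(24r,5))$ universally optimal.

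\textbf{The zero forcing bound.} I will exhibit explicit zero forcing sets of the stated sizes and argue that the color-change rule eventually colors everything blue, using the periodic structure. For $P(15r,2)$, view the graph as an outer $n$-cycle $u_0u_1\cdots u_{n-1}$ with spokes $u_iv_i$ to inner vertices joined in steps of $2$; then the $6$-set $\{u_0,u_1,v_0,v_1,v_2,v_3\}$ works: one checks, by tracking a few rounds, that a blue ``front'' advances one rung per round in both directions along the outer cycle, dragging a blue block of inner vertices along, until the two fronts meet. For $P(24r,5)$ the inner edges skip by $5$, which forces the $12$ starting vertices to be arranged more carefully (roughly, a short band of outer vertices together with enough inner vertices that the inner front is never stalled waiting for the outer front to reach a vertex it needs). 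In each case the verification is local, hence a finite check independent of $r$ once $n\ge 15$ (respectively $n\ge 24$). I expect the construction for $P(24r,5)$ to be the fussiest part of the whole argument.

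\textbf{The maximum nullity bound.} This is the cleaner half and the crux. The shift $\sigma\colon u_i\mapsto u_{i+1},\,v_i\mapsto v_{i+1}$ is an automorphism of $P(n,k)$ of order $n$, so $A=A(P(n,k))$ is block-diagonalized along the eigenspaces of $\sigma$: for $\omega=e^{2\pi i j/n}$ the $\omega$-eigenspace of $\sigma$ is two-dimensional and $A$-invariant, and on it $A$ acts as
\[
B_j=\begin{pmatrix} 2\cos\frac{2\pi j}{n} & 1 \\ 1 & 2\cos\frac{2\pi jk}{n} \end{pmatrix},\qquad j=0,1,\dots,n-1.
\]
Hence $\operatorname{spec}(A)=\bigcup_{j=0}^{n-1}\operatorname{spec}(B_j)$, and since the off-diagonal entry $1$ prevents $B_j=\lambda I$, the multiplicity of $\lambda$ as an eigenvalue of $A$ equals $\#\{\,j:\det(B_j-\lambda I)=0\,\}$. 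For $P(15r,2)$ take $\lambda=-2$: writing $c=\cos\frac{2\pi j}{n}$ and using $2\cos\frac{4\pi j}{n}+2=4c^2$,
\[
\det(B_j+2I)=(2c+2)(4c^2)-1=8c^3+8c^2-1=(2c+1)(4c^2+2c-1),
\]
whose roots are $c\in\bigl\{-\tfrac12,\cos\tfrac{2\pi}{5},\cos\tfrac{4\pi}{5}\bigr\}$; for $n=15r$ these are attained exactly at $j\equiv\pm3r,\pm5r,\pm6r\pmod{15r}$, six distinct indices, so $\operatorname{null}(A(P(15r,2))+2I)=6$. For $P(24r,5)$ take $\lambda=0$: with $c=\cos\frac{2\pi j}{n}$ and the identity $\cos5\theta=16c^5-20c^3+5c$,
\[
\det B_j=4c(16c^5-20c^3+5c)-1=64c^6-80c^4+20c^2-1,
\]
and the substitution $u=c^2$ gives $64u^3-80u^2+20u-1$ with roots $u\in\bigl\{\tfrac14,\tfrac{2+\sqrt3}{4},\tfrac{2-\sqrt3}{4}\bigr\}=\{\cos^2 60^\circ,\cos^2 15^\circ,\cos^2 75^\circ\}$, hence $c\in\{\pm\tfrac12,\pm\cos15^\circ,\pm\cos75^\circ\}$; for $n=24r$ these are attained exactly at $j\equiv\pm r,\pm4r,\pm5r,\pm7r,\pm8r,\pm11r\pmod{24r}$, twelve distinct indices, so $\operatorname{null}(A(P(24r,5)))=12$.

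Assembling the two bounds finishes the proof. Apart from the zero-forcing construction, the delicate point is having picked the right shift for $P(15r,2)$ — the adjacency matrix alone has nullity only $2$, so one must first identify $-2$ as the genuinely degenerate eigenvalue — and then the arithmetic that the roots of $(2c+1)(4c^2+2c-1)$ (respectively of $64u^3-80u^2+20u-1$) are cosines of rational multiples of $2\pi$ which, for $n=15r$ (respectively $n=24r$), hit the $n$th roots of unity in exactly the claimed multiplicity for every $r\ge 1$.
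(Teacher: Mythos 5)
This theorem is imported verbatim from \cite{MR3760976}; the present paper offers no proof of it, only the remark that ``the adjacency matrix was used,'' so your attempt can only be measured against the cited source's strategy, which it evidently shares. On its own terms the spectral half of your argument is correct and complete. The rotation $u_i\mapsto u_{i+1}$, $v_i\mapsto v_{i+1}$ commutes with $A=A(P(n,k))$, each character $e^{2\pi i j/n}$ of $\mathbb{Z}_n$ contributes exactly the $2\times 2$ block $B_j$ you wrote down, and because the off-diagonal entry is $1$ the nullity of $B_j-\lambda I$ is $0$ or $1$, so the multiplicity of $\lambda$ is the number of indices $j$ with $\det(B_j-\lambda I)=0$. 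Your factorizations check: $8c^3+8c^2-1=(2c+1)(4c^2+2c-1)$ has roots $\cos\tfrac{2\pi}{3},\cos\tfrac{2\pi}{5},\cos\tfrac{4\pi}{5}$, attained for $n=15r$ at the six distinct residues $\pm 3r,\pm 5r,\pm 6r$; and $64u^3-80u^2+20u-1=(4u-1)(16u^2-16u+1)$ has roots $\cos^2 60^{\circ},\cos^2 15^{\circ},\cos^2 75^{\circ}$, giving for $n=24r$ the twelve distinct residues $\pm r,\pm 4r,\pm 5r,\pm 7r,\pm 8r,\pm 11r$. One small inaccuracy: $\operatorname{null}(A(P(15r,2)))=2$ only when $r$ is odd (for even $r$ the values $\cos\tfrac{\pi}{5},\cos\tfrac{3\pi}{5}$ are also hit and the nullity is already $6$); this does not affect your argument, which uses $\lambda=-2$ uniformly, and is still consistent with ``attained by the adjacency matrix'' since that phrase refers to eigenvalue multiplicity.

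The one genuine gap is the zero forcing bound for $P(24r,5)$: you never exhibit the $12$-vertex set, and ``arranged more carefully'' is not a proof. The gap closes with the direct analogue of your $P(15r,2)$ set: take $Z=\{u_0,u_1\}\cup\{v_0,v_1,\dots,v_9\}$. Immediately $u_0\to u_{n-1}$, $u_1\to u_2$, $v_0\to v_{n-5}$, $v_1\to v_{n-4}$; thereafter $u_i\to u_{i+1}$ fires as soon as $v_i$ is blue, $v_i\to v_{i-5}$ (for $2\le i\le 4$) fires as soon as $u_i$ is blue, and $v_i\to v_{i+5}$ (for $i\ge 5$) fires as soon as $u_i$ and $v_{i-5}$ are blue, so an induction on the index shows the outer front at $u_{i}$ and the inner front at $v_{i+9}$ advance in lockstep around the cycle (with the symmetric statement in the backward direction) until everything is blue. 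With that set supplied, your argument is a complete and self-contained proof of the theorem, and via Corollary~\ref{cor:M_equal_Z_field_independence} it also yields Corollary~\ref{cor:gp_field_independent}.
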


\begin{cor}
\label{cor:gp_field_independent}
Let $ r $ be a positive integer. Then the two subfamilies $ P(15r,2) $ and $
P(24r,5) $ have field independent minimum rank with universally optimal
matrices. Moreover, for all fields $ \mathcal{F} $,
\[
\operatorname{M}(\mathcal{F}, P(15r,2))= \operatorname{Z}(P(15r,2)) \quad \text{and}
\quad \operatorname{M}(\mathcal{F}, P(24r,5))= \operatorname{Z}(P(24r,5)).
\]
\end{cor}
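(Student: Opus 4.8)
The plan is to obtain this as an immediate consequence of Theorem \ref{thm:GP_M_equal_Z} and Corollary \ref{cor:M_equal_Z_field_independence}. Fix $G = P(15r,2)$. Theorem \ref{thm:GP_M_equal_Z} tells us that $\operatorname{M}(G) = \operatorname{Z}(G) = 6$ and that this maximum nullity is realized by the adjacency matrix; read literally this says $\operatorname{null}(A(G)) = 6 = \operatorname{Z}(G)$, so with the integer $\lambda = 0$ we have $\operatorname{rank}(A(G) - \lambda I) = |G| - \operatorname{Z}(G)$. (If one prefers to allow the maximum nullity to be attained by $A(G) - \lambda I$ for a nonzero eigenvalue $\lambda$ of $A(G)$, then the relevant point is that this $\lambda$ is an integer, which is contained in the proof of Theorem \ref{thm:GP_M_equal_Z} in \cite{MR3760976}; for that $\lambda$ one likewise gets $\operatorname{null}(A(G) - \lambda I) = \operatorname{Z}(G)$.) In either case $G$ satisfies the hypothesis of Corollary \ref{cor:M_equal_Z_field_independence} for an integer $\lambda$.

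With this $\lambda$ in hand, I would simply invoke Corollary \ref{cor:M_equal_Z_field_independence}: it yields that the minimum rank of $G$ is field independent, that $A(G) - \lambda I$ is universally optimal, and that $\operatorname{M}(\mathcal{F}, G) = \operatorname{Z}(G)$ for every field $\mathcal{F}$ — exactly the assertion for the subfamily $P(15r,2)$. The argument for $G = P(24r,5)$ is identical, using the second half of Theorem \ref{thm:GP_M_equal_Z} (here $\operatorname{M}(G) = \operatorname{Z}(G) = 12$, attained by the adjacency matrix) to supply the required integer $\lambda$ and then applying Corollary \ref{cor:M_equal_Z_field_independence} in the same way. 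Since the two subfamilies are handled independently, combining the two conclusions gives the corollary.

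There is no genuine obstacle here: the corollary is essentially a repackaging of Theorem \ref{thm:GP_M_equal_Z} through the field-independence criterion of Corollary \ref{cor:M_equal_Z_field_independence}. The one point that deserves a moment's attention is confirming that the shift $\lambda$ for which $\operatorname{null}(A(G) - \lambda I) = \operatorname{Z}(G)$ is an integer — this is what makes $A(G) - \lambda I$ an integer matrix so that Proposition \ref{prop:field_rank_less_than_real_rank}, on which Corollary \ref{cor:M_equal_Z_field_independence} rests, applies. Under the phrasing of Theorem \ref{thm:GP_M_equal_Z} this is immediate, since $\lambda = 0$ works.
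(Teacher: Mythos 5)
Your proposal is correct and is exactly the argument the paper intends: the corollary is stated without proof as an immediate consequence of Theorem \ref{thm:GP_M_equal_Z} (maximum nullity attained by the adjacency matrix, so $\lambda = 0$ works) combined with Corollary \ref{cor:M_equal_Z_field_independence}. Your side remark confirming that $\lambda$ is an integer is the right point to check and is trivially satisfied here.
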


The \textit{Cartesian product} of the graphs $ G $ and $ H $, denoted by $ G
\square H $, has vertex set $ \{ (v,w) | v \in V(G), w \in
V(H) \} $ and edge set 
\[
\{ (v_1,w_1)(v_2,w_2) \, | \, ( v_1 = v_2
\text{ and } w_1w_2 \in E(H) ) \text{ or } ( v_1v_2 \in E(G) \text{ and } w_1
=w_2 ) \}.
\]

\begin{thm}{\rm\cite[Theorem 3.8]{MR2388646}}
\label{thm:M_equal_cartesian_product}
Let $ k \geq 3 $. Then $ \operatorname{M}(C_k \square P_t) =
\operatorname{Z}(C_k \square P_t) = \min\{k,2t\} $.
\end{thm}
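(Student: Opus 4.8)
The plan is to prove the two bounds $\operatorname{Z}(C_k\square P_t)\le\min\{k,2t\}$ and $\operatorname{M}(C_k\square P_t)\ge\min\{k,2t\}$; combined with $\operatorname{M}\le\operatorname{Z}$ from Proposition~\ref{prop:M_is_less_than_Z_over_any_field}, this forces $\operatorname{M}=\operatorname{Z}=\min\{k,2t\}$. Write the vertices as pairs $(i,j)$ with $i\in\mathbb{Z}_k$ the cycle coordinate and $1\le j\le t$ the path coordinate, and call $\{(i,j):1\le j\le t\}$ the \emph{fiber over $i$} and $\{(i,j):i\in\mathbb{Z}_k\}$ the \emph{$j$-th layer}.

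For the upper bound on $\operatorname{Z}$ I would exhibit two zero forcing sets. First, the $1$st layer, of size $k$: since $k\ge 3$, every blue vertex $(i,1)$ has $(i,2)$ as its unique white neighbor, so layer $1$ forces layer $2$, then layer $2$ forces layer $3$, and so on. Second, the union of the fibers over $0$ and over $1$, of size $2t$: for each $j$ the vertex $(0,j)$ has $(k-1,j)$ as its only white neighbor (its other neighbors $(1,j)$ and $(0,j\pm1)$, when they exist, are blue), so the fiber over $0$ forces the fiber over $k-1$, and likewise the fiber over $1$ forces the fiber over $2$; the blue fibers then propagate around the cycle in both directions until every vertex is blue. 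Hence $\operatorname{Z}(C_k\square P_t)\le\min\{k,2t\}$.

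For the lower bound on $\operatorname{M}$ I would exhibit a matrix of nullity $\min\{k,2t\}$ of the form $M=I_t\otimes B+T\otimes I_k$ with $B\in\mathcal{S}(C_k)$ and $T\in\mathcal{S}(P_t)$. One checks directly that $M\in\mathcal{S}(C_k\square P_t)$: the cycle edges come from the nonzero off-diagonal entries of $B$, the path edges from the nonzero off-diagonal entries of $T\otimes I_k$, these two sets of positions do not overlap, and the diagonal of $M$ is unrestricted. The two summands commute, so $M$ is diagonalized by tensor products of eigenvectors and $\operatorname{null}(M)=\sum_{\ell=1}^{t}\operatorname{mult}_B(-\tau_\ell)$, where $\tau_1,\dots,\tau_t$ are the eigenvalues of the irreducible tridiagonal matrix $T$; these are automatically distinct, and any $t$ distinct reals occur as the spectrum of such a $T$ by the classical inverse eigenvalue theorem for Jacobi matrices. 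So it suffices to choose $B\in\mathcal{S}(C_k)$ concentrating as much multiplicity as possible: if $B$ has $\lfloor k/2\rfloor$ double eigenvalues (together with one further simple eigenvalue when $k$ is odd), then taking the $\tau_\ell$ to be the negatives of $\min\{t,\lceil k/2\rceil\}$ of the distinct eigenvalues of $B$, the double ones first, gives $\operatorname{null}(M)=\min\{2t,k\}$ in every range of $t$. When $k$ is odd the adjacency matrix $B=A(C_k)$ already has exactly this spectrum, and for $k$ even it works for every $t$ except $t=k/2$.

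The crux is therefore the even case at $t=k/2$: one must produce $B\in\mathcal{S}(C_k)$ with $k$ even all of whose $k$ eigenvalues have multiplicity $2$ — equivalently $B=\sum_{i=1}^{k/2}\mu_i\Pi_i$ for pairwise orthogonal rank-$2$ projections $\Pi_i$. This is precisely the multiplicity missing from the adjacency matrix (for $k$ even $A(C_k)$ has only $k/2-1$ double eigenvalues), and it is exactly why $\operatorname{M}(C_k\square P_{k/2})=k$ is the tight point of the statement; in particular no single shift $A(C_k\square P_{k/2})-\lambda I$ has nullity $k$, since that would require the $k/2$ distinct numbers $\lambda-\alpha_1,\dots,\lambda-\alpha_{k/2}$ (with $\alpha_\ell$ the eigenvalues of $A(P_{k/2})$) to all be double eigenvalues of $A(C_k)$. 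I would construct such a $B$ by hand: for $C_4$ one solves $B^2=I$ inside $\mathcal{S}(C_4)$ explicitly, and for general even $k$ one seeks $B$ of this doubled‑spectrum type among the $C_k$‑patterned symmetric matrices, taking care that every cyclic off-diagonal entry stays nonzero. Establishing that such a $B$ exists for all even $k$ — the dimension of the $C_k$‑patterned symmetric matrices comfortably exceeds the codimension of the all‑eigenvalues‑doubled locus, but turning this into an actual construction is delicate — is the part that needs the most work; the remainder of the argument is the bookkeeping above.
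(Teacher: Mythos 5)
This theorem is only cited in the paper (it is \cite[Theorem 3.8]{MR2388646}), so there is no in-house proof to compare against; measured against the argument in the cited source, your outline follows the same two-pronged strategy (explicit zero forcing sets for the upper bound, a tensor-sum matrix $I_t\otimes B+T\otimes I_k$ for the lower bound), and the zero-forcing half and the multiplicity bookkeeping are correct. But the one step you yourself flag as the crux is genuinely missing: you never establish that for every even $k$ there is a matrix $B\in\mathcal{S}(C_k)$ all of whose eigenvalues have multiplicity $2$. A dimension count comparing the space of $C_k$-patterned symmetric matrices to the codimension of the all-doubled locus does not by itself produce a matrix in that locus whose cycle entries are all nonzero, and without this matrix the case $k$ even, $t=k/2$ of the lower bound is unproved. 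As written, the argument only proves $\operatorname{M}(C_k\square P_{k/2})\ge k-1$ for even $k$.

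The gap is real but easily closed, in either of two ways. The cited source appeals to the solution of the inverse eigenvalue problem for periodic Jacobi matrices (Ferguson), which says in particular that any spectrum of the form $\lambda_1=\lambda_2<\lambda_3=\lambda_4<\cdots<\lambda_{k-1}=\lambda_k$ is realized by a matrix in $\mathcal{S}(C_k)$. Alternatively, an explicit $B$ does the job: take the adjacency matrix of $C_k$ with exactly one edge weight changed to $-1$. Its eigenvectors are $\left(\zeta^0,\zeta^1,\dots,\zeta^{k-1}\right)^T$ with $\zeta^k=-1$, so its spectrum is $\bigl\{2\cos\bigl((2m+1)\pi/k\bigr):m=0,\dots,k-1\bigr\}$; for $k$ even the pairing $m\leftrightarrow k-1-m$ has no fixed point, giving $k/2$ distinct eigenvalues each of multiplicity exactly $2$ (the angles $(2m+1)\pi/k$, $0\le m\le k/2-1$, lie in $(0,\pi)$). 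Your $C_4$ computation is the $k=4$ instance of this, up to signature similarity. With that matrix inserted, your proof is complete and coincides in substance with the standard one.
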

 
\begin{ex}
\label{ex:generalized_petersen_not_field_independent}
By 
\hyperref[thm:M_equal_cartesian_product]{Theorem
\ref{thm:M_equal_cartesian_product}}, $ \operatorname{M}(
C_7 \square P_2) = 4 $ which implies $ \operatorname{mr}(
C_7 \square P_2) = 10 $. By computation via SageMath (see \cite{ecgsage}), there does not
exist a matrix in $ \mathcal{S}(\mathbb{Z}_2,C_7 \square P_2) $
having rank equal to 10.  Therefore by
\hyperref[obs:not_field_independent]{Observation
\ref{obs:not_field_independent}}, $ C_7 \square P_2 $ does not have field
independent minimum rank. 
\end{ex}

\hyperref[ex:generalized_petersen_not_field_independent]{Example
\ref{ex:generalized_petersen_not_field_independent}} shows that the generalized
Petersen graphs do not have field independent minimum rank field independent
since $ C_7 \square P_2 $ is isomorphic to $ P(7,1) $. It is known that $ C_n \square
P_t $ does not have field independent minimum rank (see {\rm\cite[Example
3.5]{MR2530143}}).

\section{An application of the nullity of a graph}
\label{sec:the_Nullity_of_a_Graph}
In this section, we introduce a new graph parameter based on the nullity of the
adjacency matrix that acts as a lower bound for the maximum nullity of the
graph.  We then apply this to the Aztec diamond graphs and to some circulants
to compute the maximum nullity and the zero forcing number, and show that they
have field independent minimum rank with the adjacency matrix as a universally
optimal matrix.
\bigskip

A \textit{general graph}  is a graph that may contain loops (edges of the form
$ vv $) and/or multi-edges (two edges containing the same vertices $ u $ and $
v $ are called \textit{multi-edges}). Let $ G $ be a general graph and let $
v,u \in V(G) $. The neighborhood of $ v $ in a general graph $ G $, denoted by
$ N_G(v) $, is a multiset containing vertices of $ V(G) $ such that $ k $
copies of $ u $ are in $ N_G(v) $ if and only if there are $ k $ copies of $ uv
$ in $ E(G) $. Let $ X $ and $ Y $ be multisets containing elements of $ V(G)
$. The general graph $ G_{v+X} $ is obtained from $ G $ by adding one edge $ vw
$ for each $ w \in N_G(x) $ and for every $ x \in X $ (see
\hyperref[fig:added_substracted_vertices]{Figure
\ref{fig:added_substracted_vertices}}). Let $ v \in X $ and $ y \in Y $.
Suppose $ N_{G_{v+X}}(y) \subseteq N_{G_{v+X}}(v) $. Then the general graph $
G_{v+X-Y} $ is obtained from $ G_{v+X} $ by deleting one edge $ vw $ for each $
w \in N_{G_{v+X}}(y) $ and for every $ y \in Y $ (see
\hyperref[fig:added_substracted_vertices]{Figure
\ref{fig:added_substracted_vertices}}). In the case that $ X $ and $ Y $
consists of a single vertex $ x $ or $ y $, we write $ G_{v+x} $ or $ G_{v+x-y}
$.
\bigskip

We define a color change rule as follows: 
In a graph $ G $, having each vertex colored red or white, a white vertex $ u
$ can be colored red if there exists a white vertex $ v$ and multisets of
white vertices $X , Y $ such that 
\begin{enumerate}
\item $ u \notin \{v\} \cup X \cup Y $, and
\item $ N_{G_{u+U_k}}(u) =
N_{G_{v+X-Y}}(v) $ 
\end{enumerate}
for some nonnegative integer $ k $ and the multiset $ U_k $ containing $ k $
copies of $ u $, (whenever $ k = 0 $, $ U_k $ is the empty set and $
N_{G_{u+U_k}}(u) = N_{G}(u) $). In this case we say that $ u $ can be colored
red by $
(v,X,Y,k) $.
\bigskip

\begin{ex}
\label{ex:red_set}
\hyperref[fig:added_substracted_vertices]{Figure
\ref{fig:added_substracted_vertices}} illustrates the process of creating $ G_{1+4-0} $. Moreover, vertices $ 1 $ and $ 3 $ have the
same neighborhood in $ G_{1+4-0} $, so vertex $ 3 $ can be colored red
in $ G$ by $ (1,\{4\},\{0\},0) $. We can also color vertex $ 5 $ red. Consider
the general graph $ G_{1+4-2} $ in which vertices $ 1 $ and $ 5 $
have the same neighborhood in $ G_{1+4-2}$. 
\end{ex}

\begin{figure}[t]
\begin{center}
\begin{tikzpicture}[scale=0.83,font=\scriptsize]
\GraphInit[vstyle=Normal]
%
\tikzset{VertexStyle/.style = {shape = circle, font=\large}}
\Vertex[L=\hbox{$G$},x=0cm,y=3cm]{G}
\tikzset{VertexStyle/.style = {shape = circle, draw=black,
minimum size=14pt, inner sep=0.5pt}}
\Vertex[L=\hbox{${0}$},x=2cm,y=3cm]{v0}
\Vertex[L=\hbox{${1}$},x=-0.2cm,y=0.3cm]{v1}
\Vertex[L=\hbox{${2}$},x=4.2cm,y=0.3cm]{v2}
\Vertex[L=\hbox{${3}$},x=2cm,y=2cm]{v3}
\Vertex[L=\hbox{${4}$},x=1cm,y=1cm]{v4}
\Vertex[L=\hbox{${5}$},x=3cm,y=1cm]{v5}
\Edge[](v0)(v1)
\Edge[](v0)(v3)
\Edge[](v0)(v2)
\Edge[](v1)(v2)
\Edge[](v1)(v4)
\Edge[](v2)(v5)
\Edge[](v3)(v4)
\Edge[](v3)(v5)
\Edge[](v4)(v5)
\end{tikzpicture}
\hspace{1cm}
\begin{tikzpicture}[scale=0.83,font=\scriptsize]
\GraphInit[vstyle=Normal]
%
\tikzset{VertexStyle/.style = {shape = circle, font=\large}}
\Vertex[L=\hbox{$G_{1+4}$},x=0cm,y=3cm]{G}
\tikzset{VertexStyle/.style = {shape = circle, font=\large}}
\tikzset{VertexStyle/.style = {shape = circle, draw=black,
minimum size=14pt, inner sep=0.5pt}}
\Vertex[L=\hbox{${0}$},x=2cm,y=3cm]{v0}
\Vertex[L=\hbox{${1}$},x=-0.2cm,y=0.3cm]{v1}
\Vertex[L=\hbox{${2}$},x=4.2cm,y=0.3cm]{v2}
\Vertex[L=\hbox{${3}$},x=2cm,y=2cm]{v3}
\Vertex[L=\hbox{${4}$},x=1cm,y=1cm]{v4}
\Vertex[L=\hbox{${5}$},x=3cm,y=1cm]{v5}
\Edge[](v0)(v1)
\Edge[](v0)(v3)
\Edge[](v0)(v2)
\Edge[](v1)(v2)
\Edge[](v1)(v4)
\Edge[](v2)(v5)
\Edge[](v3)(v4)
\Edge[](v3)(v5)
\Edge[](v4)(v5)
\Edge[](v1)(v5)

\Edge[style=bend left](v1)(v3)
\Loop[dist=1cm,dir=WE,style={thick,-}](v1)
\end{tikzpicture}
\hspace{1cm}
\begin{tikzpicture}[scale=0.83,font=\scriptsize]
\GraphInit[vstyle=Normal]
%
\tikzset{VertexStyle/.style = {shape = circle, font=\large}}
\Vertex[L=\hbox{${G_{1+4-0}}$},x=0cm,y=3cm]{G}
\tikzset{VertexStyle/.style = {shape = circle, font=\large}}
\tikzset{VertexStyle/.style = {shape = circle, draw=black,
minimum size=14pt, inner sep=0.5pt}}
\Vertex[L=\hbox{${0}$},x=2cm,y=3cm]{v0}
\Vertex[L=\hbox{${1}$},x=-0.2cm,y=0.3cm]{v1}
\Vertex[L=\hbox{${2}$},x=4.2cm,y=0.3cm]{v2}
\Vertex[L=\hbox{${3}$},x=2cm,y=2cm]{v3}
\Vertex[L=\hbox{${4}$},x=1cm,y=1cm]{v4}
\Vertex[L=\hbox{${5}$},x=3cm,y=1cm]{v5}
\Edge[](v0)(v1)
\Edge[](v0)(v3)
\Edge[](v0)(v2)
\Edge[](v1)(v4)
\Edge[](v2)(v5)
\Edge[](v3)(v4)
\Edge[](v3)(v5)
\Edge[](v4)(v5)
\Edge[](v1)(v5)

\end{tikzpicture}
\end{center}
\caption{This shows the graph $ G_{1+4-0}. $}
\label{fig:added_substracted_vertices}
\end{figure}
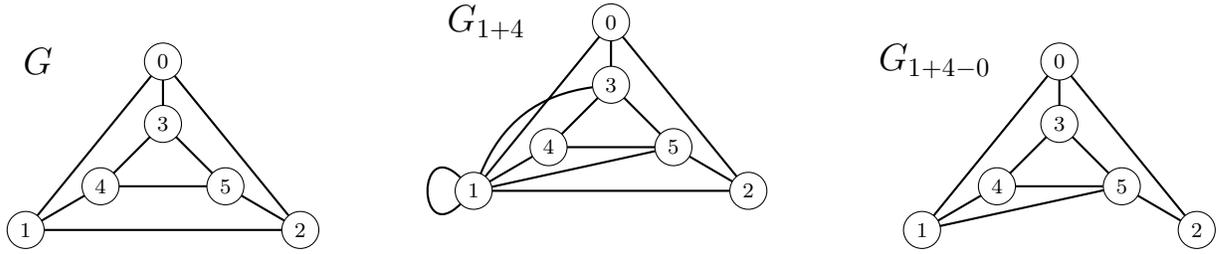

A set of red vertices is called a \textit{red set}, denoted by $ \mathcal{R}
$, if the vertices $ v_1,v_2, \dots, v_t $ of $ \mathcal{R} $ can be
sequentially colored red. The \textit{nullity of a graph} $ G $, denoted by $
\operatorname{null}(G) $, is the maximum cardinality over the set of all red
sets.
\bigskip

\begin{obs}
\label{obs:ccr_equation}
Let $ u,v $ be white vertices of $ V(G) $, $ X $ and $ Y $ be multisets
containing white vertices of $ V(G) $, and $ k $ be a nonnegative
integer. Then $ u $ can be colored red by $ (v,X,Y,k) $ if and only if 
\begin{equation}
\label{eq:linear_comb}
(k+1)\cdot\operatorname{row}_{A(G)}(u)
= \operatorname{row}_{A(G)}(v) + \sum_{x \in X}
\operatorname{row}_{A(G)}(x) - \sum_{y \in Y}
\operatorname{row}_{A(G)}(y).
\end{equation}
\end{obs}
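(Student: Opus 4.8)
The plan is to convert the multiset identity $N_{G_{u+U_k}}(u)=N_{G_{v+X-Y}}(v)$ from condition~(2) of the color change rule into the row identity~\eqref{eq:linear_comb}. I treat the disjointness condition~(1), $u\notin\{v\}\cup X\cup Y$, as a standing hypothesis on the tuple $(v,X,Y,k)$: it is part of the definition of $u$ being colorable and is not recovered from~\eqref{eq:linear_comb}, so the content to be established is the equivalence of condition~(2) with~\eqref{eq:linear_comb}. The key point is that a finite multiset with elements in $V(G)$ is determined by its \emph{multiplicity vector} $\mathbf{m}\in\mathbb{Z}_{\ge 0}^{V(G)}$, where $\mathbf{m}_z$ is the number of copies of $z$, and that for the original simple graph $G$ the multiplicity vector of $N_G(w)$ is exactly $\operatorname{row}_{A(G)}(w)$, since $A(G)$ has zero diagonal and a $1$ in entry $(w,z)$ precisely when $wz\in E(G)$. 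Every general graph occurring in the rule is obtained from $G$ by adding and deleting edges incident to the single vertex $v$ (or $u$) whose other endpoint lies in $V(G)$, so $N_{G_{u+U_k}}(u)$ and $N_{G_{v+X-Y}}(v)$ remain multisets over $V(G)$ and it suffices to compute their multiplicity vectors.

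First I would compute the multiplicity vector of $N_{G_{u+U_k}}(u)$. Forming $G_{u+U_k}$ adds, for each of the $k$ copies of $u$ in $U_k$, one edge $uw$ for every $w\in N_G(u)$, and since $u$ carries no loop in $G$ these are $k$ extra copies of each edge joining $u$ to one of its neighbours; hence $N_{G_{u+U_k}}(u)$ consists of $k+1$ copies of each element of $N_G(u)$, with multiplicity vector $(k+1)\operatorname{row}_{A(G)}(u)$. Next I would compute the multiplicity vector of $N_{G_{v+X-Y}}(v)$ in two stages. Passing from $G$ to $G_{v+X}$ adds, for each $x\in X$, one edge $vw$ for every $w\in N_G(x)$, which puts one extra copy of each element of $N_G(x)$ into the neighbourhood of $v$ --- a coincidence $w=v$ simply records a loop at $v$, consistently, because $\operatorname{row}_{A(G)}(x)$ has a $1$ in coordinate $v$ exactly when $xv\in E(G)$ --- so the multiplicity vector of $N_{G_{v+X}}(v)$ is $\operatorname{row}_{A(G)}(v)+\sum_{x\in X}\operatorname{row}_{A(G)}(x)$. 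Passing from $G_{v+X}$ to $G_{v+X-Y}$ deletes, for each $y\in Y$, one edge $vw$ for every $w$ in the relevant neighbourhood of $y$; the hypothesis $N_{G_{v+X}}(y)\subseteq N_{G_{v+X}}(v)$ built into the definition of $G_{v+X-Y}$ guarantees these edges are present with sufficient multiplicity, and each $y$ then removes one copy of each of its neighbours from the neighbourhood of $v$, subtracting $\operatorname{row}_{A(G)}(y)$. Assembling the two stages, the multiplicity vector of $N_{G_{v+X-Y}}(v)$ is $\operatorname{row}_{A(G)}(v)+\sum_{x\in X}\operatorname{row}_{A(G)}(x)-\sum_{y\in Y}\operatorname{row}_{A(G)}(y)$.

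The equivalence follows directly, since two multisets are equal if and only if their multiplicity vectors are equal: condition~(2) holds exactly when the two multiplicity vectors computed above coincide, which is precisely~\eqref{eq:linear_comb}. For the implication from~\eqref{eq:linear_comb} back to the rule one further observes that~\eqref{eq:linear_comb} forces its right-hand side to be entrywise nonnegative, since the left-hand side is, and this nonnegativity is exactly what makes the construction $G_{v+X-Y}$ well defined, so the color change rule does apply.

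I expect the only place needing care to be the multiset bookkeeping in the $G_{v+X-Y}$ stage: tracking the $v$-coordinate and the loops created at $v$, handling vertices that occur with multiplicity in $X$, in $Y$, or among the sets $N_G(x)$, and using the precise forms of the general-graph operations --- in particular the ``$\subseteq$'' hypothesis that legitimizes the edge deletions --- to confirm that each $y\in Y$ contributes exactly $-\operatorname{row}_{A(G)}(y)$ rather than something larger. The rest is a routine unwinding of the definitions of the operations $G_{\cdot}$ and of $\operatorname{row}_{A(G)}$.
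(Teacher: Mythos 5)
The paper states this result as an Observation and supplies no proof, so there is nothing to compare against; your multiplicity-vector strategy is surely the intended argument, and your computations of $N_{G_{u+U_k}}(u)$ and of $N_{G_{v+X}}(v)$ (including the loop bookkeeping at $v$) are correct. But the one step you explicitly defer --- that each $y\in Y$ contributes exactly $-\operatorname{row}_{A(G)}(y)$ --- is precisely the step that fails under the paper's literal definition of $G_{v+X-Y}$. The deletion for $y$ is indexed by $N_{G_{v+X}}(y)$, not by $N_G(y)$, and these multisets differ in the multiplicity of $v$: forming $G_{v+X}$ adds $\sum_{x\in X}A(G)_{x,y}$ new edges $vy$, so the multiplicity vector of $N_{G_{v+X}}(y)$ is $\operatorname{row}_{A(G)}(y)+\left(\sum_{x\in X}A(G)_{x,y}\right)e_v$, where $e_v$ is the standard basis vector indexed by $v$. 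Each $y$ therefore subtracts this larger vector, and the multiplicity vector of $N_{G_{v+X-Y}}(v)$ is
\[
\operatorname{row}_{A(G)}(v)+\sum_{x\in X}\operatorname{row}_{A(G)}(x)-\sum_{y\in Y}\operatorname{row}_{A(G)}(y)-\left(\sum_{y\in Y}\sum_{x\in X}A(G)_{x,y}\right)e_v,
\]
which matches the right-hand side of \eqref{eq:linear_comb} only when no element of $X$ is adjacent to an element of $Y$.

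This is not a removable technicality: take $G=C_4$ with vertices $u,v,x,y$ in cyclic order and the tuple $(v,\{x\},\{y\},0)$. Then $\operatorname{row}_{A(G)}(u)=\operatorname{row}_{A(G)}(v)+\operatorname{row}_{A(G)}(x)-\operatorname{row}_{A(G)}(y)$, so \eqref{eq:linear_comb} holds; yet $N_{G_{v+x}}(v)=\{u,x,v,y\}$ (the loop created because $v\in N_G(x)$ contributes one copy of $v$), $N_{G_{v+x}}(y)=\{x,u,v\}$, the containment hypothesis is satisfied, and the deletion leaves $N_{G_{v+x-y}}(v)=\{y\}\neq\{v,y\}=N_{G_{u+U_0}}(u)$, so condition (2) of the color change rule fails. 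The equivalence (and your proof verbatim) is restored if the deletion rule is read as removing one edge $vw$ for each $w\in N_G(y)$ rather than $w\in N_{G_{v+X}}(y)$ --- evidently the intended meaning, since the proof of Theorem \ref{thm:null_G_null_AG} invokes the observation with no restriction on adjacencies between $X$ and $Y$ --- but as your argument stands, the assertion that each $y$ subtracts exactly $\operatorname{row}_{A(G)}(y)$ is the gap, and carrying out the bookkeeping you postponed would have exposed it.
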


\begin{thm}
\label{thm:null_G_null_AG}
Let $ G $ be a simple graph. Then $ \operatorname{null}(G) =
\operatorname{null}(A(G)) $.
\end{thm}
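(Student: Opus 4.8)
The plan is to prove the equality $\operatorname{null}(G) = \operatorname{null}(A(G))$ by establishing two inequalities, translating the combinatorial ``red set'' definition into linear algebra via \hyperref[obs:ccr_equation]{Observation \ref{obs:ccr_equation}}. The key observation I would exploit is that, by Equation~\eqref{eq:linear_comb}, coloring a vertex $u$ red by some $(v,X,Y,k)$ is equivalent to a specific linear dependence among the rows of $A(G)$: namely $\operatorname{row}(u)$ is expressible (after scaling by $k+1$) as an integer combination of $\operatorname{row}(v)$ and the rows indexed by $X$ and $Y$, where all of $v, X, Y$ consist of currently-white vertices, i.e., vertices not yet in the red set being built. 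Since the diagonal of $A(G)$ is zero and the off-diagonal pattern is symmetric, there is a symmetry between rows and columns that makes this clean.

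For the inequality $\operatorname{null}(G) \le \operatorname{null}(A(G))$: suppose $\mathcal{R} = \{v_1, \dots, v_t\}$ is a red set, colored in that order. I claim the corresponding rows $\operatorname{row}(v_1), \dots, \operatorname{row}(v_t)$ of $A(G)$ lie in the span of the remaining $|G| - t$ rows; more precisely, I would show by induction (on the reverse order $v_t, v_{t-1}, \dots, v_1$) that each $\operatorname{row}(v_i)$ is a linear combination of $\{\operatorname{row}(w) : w \notin \mathcal{R}\} \cup \{\operatorname{row}(v_j) : j > i\}$, and then unwind this so that in fact every $\operatorname{row}(v_i)$ is a combination of the non-red rows alone. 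This is legitimate because when $v_i$ was colored red, the witnessing triple $(v,X,Y,k)$ used only white vertices, and a vertex is white at that stage precisely when it is either outside $\mathcal{R}$ or equals some $v_j$ with $j > i$. Hence $\operatorname{rank}(A(G)) \le |G| - t$, giving $\operatorname{null}(A(G)) \ge t$, and taking $t = \operatorname{null}(G)$ finishes this direction.

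For the reverse inequality $\operatorname{null}(A(G)) \ge \operatorname{null}(G)$ — wait, I should be careful: the direction I actually need in the second half is $\operatorname{null}(A(G)) \le \operatorname{null}(G)$. Here I would argue that if $\operatorname{rank}(A(G)) = r$, then there is a set $B$ of $r$ rows forming a basis of the row space, and every row outside $B$ is a linear combination of rows in $B$. I want to color all $|G| - r$ vertices outside $B$ red, one at a time. The delicate point is that the color-change rule at each step requires the witnessing combination to involve \emph{one} distinguished white vertex $v$ plus multisets $X, Y$, with the coefficient pattern $(k+1)\operatorname{row}(u) = \operatorname{row}(v) + \sum_{X}\operatorname{row}(x) - \sum_{Y}\operatorname{row}(y)$ — that is, the coefficients on the right are $\pm 1$ with multiplicity (from the multisets) and the coefficient on $\operatorname{row}(u)$ is a positive integer $k+1$. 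A general linear combination over $\mathbb{R}$ need not have this shape. So the main obstacle — and the step I expect to require the real work — is showing that the dependence of a non-basis row on the basis rows can always be rewritten in this constrained ``rational with a common denominator, $\pm 1$ numerators'' form using multisets, or else choosing the basis and the order of coloring cleverly so that this is automatic.

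I anticipate resolving the obstacle as follows. Since $A(G)$ is an integer matrix, by clearing denominators any rational linear dependence $\operatorname{row}(u) = \sum_j c_j \operatorname{row}(b_j)$ among its rows can be scaled to $m \operatorname{row}(u) = \sum_j n_j \operatorname{row}(b_j)$ with $m$ a positive integer and $n_j \in \mathbb{Z}$; writing each integer coefficient $n_j$ as a signed sum of $\pm 1$'s realizes the multisets $X$ (for a ``+1'' contribution of $b_j$) and $Y$ (for a ``$-1$''), with $k = m-1$, and one of the basis vectors with a nonzero coefficient can be singled out as the vertex $v$ (absorbing a $+1$ or $-1$ of its coefficient accordingly, possibly after also moving it to $Y$-side — minor bookkeeping). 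The requirement in the color change rule that $N_{G_{v+X-Y}}(y) \subseteq N_{G_{v+X-Y}}(v)$ for the subtraction to be defined corresponds exactly to the combination still being a valid equality of rows at each intermediate stage, which it is since we are working with an honest identity in $\mathbb{R}^{|G|}$. Finally, I must handle the sequencing: I color the non-basis vertices in an arbitrary order, and at each step the ``basis'' rows are still white (they are never colored red), so each witnessing tuple uses only white vertices as required; thus all $|G| - r$ non-basis vertices form a red set, giving $\operatorname{null}(G) \ge |G| - r = \operatorname{null}(A(G))$. Combining the two inequalities yields the theorem. Throughout I would keep an eye on the edge cases $k=0$ and $X = Y = \emptyset$ (the rule explicitly allows these), and on the fact that $A(G)$ is symmetric with zero diagonal so that ``rows'' and the neighborhood multisets carry the same information.
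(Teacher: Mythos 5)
Your proposal is correct and follows essentially the same route as the paper's proof: the first inequality via the same substitution/induction showing red rows are spanned by the white rows, and the second via choosing a row basis, clearing denominators with a common multiple $d$ (so $k=d-1$), and encoding the integer coefficients as multiplicities in the multisets $X$ and $Y$ with one positively-weighted basis vertex singled out as $v$. The bookkeeping points you flag (sign of the coefficient on $v$, the definedness of the $-Y$ operation) are handled at the same level of detail in the paper itself.
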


\begin{proof}
Let $ G $ be a graph with all vertices initially colored white. Suppose that at
some stage the vertices $ u_1,u_2,\dots,u_{q-1} $ have been sequentially
colored red, the remaining vertices colored white, and that each $
\operatorname{row}_{A(G)}(u_i) $ can be expressed as a linear combination of
rows indexed $ W = V(G) \setminus \{u_1,u_2,\dots,u_{q-1}\} $. Suppose that $ v
$ and the vertices of $ X,Y $ are white and $ u_q $ can be colored red by $
(v,X,Y,k) $. We show that $ \operatorname{row}_{A(G)}(u_i) $ for $ i =
1,2,\dots,q $ can each be expressed as a linear combination of rows indexed by
$ W' = W \setminus \{u_q\} $. 
\bigskip

Let $ W' = \{w_1, w_2, \dots, w_\ell \} $. By
\hyperref[eq:linear_comb]{(\ref{eq:linear_comb})}, $ 
\operatorname{row}_{A(G)}(u_q) $ can be expressed as a linear combination of rows
indexed by $ W' $. We know that, $ \operatorname{row}_{A(G)}(u_i)
$ can be expressed as a linear combination of the rows associated with the vertices in $ W = W' \cup \{u_q\}$. 
By substituting the expression for $ \operatorname{row}_{A(G)}(u_q) $ into that for $ \operatorname{row}_{A(G)}(u_i) $, we see that $ \operatorname{row}_{A(G)}(u_i) $ is a linear
combination of rows associated with vertices in $ W' $.
At the conclusion of this process $
\operatorname{rank}(A(G)) \leq n - \operatorname{null}(G)$, so $
\operatorname{null}(G) \leq \operatorname{null}(A(G))  $.
\bigskip

Let $ W $ be a set of linearly independent rows of $ A(G) $ that forms a basis
for the row space of $ A(G) $. Let $ r = |W| $ and let $ v_1,v_2, \dots, v_r $
be the vertices associated with these rows. Then each row not in $ W $, $
\operatorname{row}_{A(G)}(v_j) $ with $ j > r $, can be written as 
\[
\frac{c_1}{d_1} \operatorname{row}_{A(G)}(v_1) +\frac{c_2}{d_2} \operatorname{row}_{A(G)}(v_2) + \cdots + \frac{c_r}{d_r} \operatorname{row}_{A(G)}(v_r)
\]
where $ c_i,d_i \in \mathbb{Z} $ and $ d_i > 0 $ for $ i = 1, \dots , r $.
By letting $ d = \operatorname{lcm}(d_1,d_2, \dots,d_r) $ we can write 
\begin{equation}
d \cdot \operatorname{row}_{A(G)}(v_j) = {c_1}{s_1} \operatorname{row}_{A(G)}(v_1) +{c_2}{s_2} \operatorname{row}_{A(G)}(v_2) + \cdots + {c_r}{s_r} \operatorname{row}_{A(G)}(v_r)
\end{equation}
where $ s_i = d/d_i \in \mathbb{Z} $. Fix $ v_j $ corresponding to a row in $ W
$. Let $ \ell \in \{1,2,\dots,r\} $ such that $ c_\ell s_\ell > 0 $. Let $ X $
be the multiset of vertices consisting of $ c_\ell s_\ell -1 $ copies of
$ v_\ell $ and $ c_i s_i $ copies of $ v_i $ for $ i \neq \ell $ and $ c_i s_i
> 0$ and let $ Y $ be the multiset of vertex consisting of $ c_is_i $ copies of
$ v_i $ for $ c_is_i < 0 $. Then $ v_j $ can be colored red by $ (v_\ell,X,Y,d-1) $. This implies $
\operatorname{null}(G) \geq n - r  \geq n - \operatorname{rank}(A(G)) =
\operatorname{null}(A(G))$. 
\end{proof}

\begin{cor}
\label{cor:null_of_bipartite}
Let $ G $ be a bipartite graph with independent sets $ \mathbf{B}  $ and $
\mathbf{\bar{B}} $ such that $ |\mathbf{B}| = |\mathbf{\bar{B}}| $. Let $ \mathcal{R}
\subseteq \mathbf{B} $ be a red set  such that every vertex in $ \mathcal{R} $
is colored with some $ (v,X,Y,k) $ where $ \{v\} \cup X \cup Y $ contains only
vertices from $ \mathbf{B} $. Then $2|\mathcal{R}| \leq
\operatorname{null}(A(G))$.
\end{cor}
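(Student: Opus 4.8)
The plan is to exploit the block structure of the adjacency matrix of a bipartite graph together with the row-dependence reformulation of the color change rule in \hyperref[obs:ccr_equation]{Observation \ref{obs:ccr_equation}}. Order the vertices so that those of $\mathbf{B}$ come first and write
\[
A(G) = \begin{bmatrix} 0 & N \\ N^{T} & 0 \end{bmatrix},
\]
where $N$ is the $m \times m$ biadjacency matrix and $m = |\mathbf{B}| = |\mathbf{\bar B}|$. The standard fact about block anti-diagonal matrices gives $\operatorname{rank}(A(G)) = \operatorname{rank}(N) + \operatorname{rank}(N^{T}) = 2\operatorname{rank}(N)$, hence $\operatorname{null}(A(G)) = 2m - 2\operatorname{rank}(N) = 2\bigl(m - \operatorname{rank}(N)\bigr)$. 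So it suffices to show $m - \operatorname{rank}(N) \ge |\mathcal{R}|$, that is, that the rows of $N$ indexed by $\mathcal{R}$ are dispensable from a row-spanning set.

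For this, observe that for any $w \in \mathbf{B}$ the row $\operatorname{row}_{A(G)}(w)$ is supported entirely on the columns indexed by $\mathbf{\bar B}$ (there are no edges inside $\mathbf{B}$), and its restriction to those columns is exactly $\operatorname{row}_N(w)$. Now run a sequential coloring $u_1, \dots, u_t$ of $\mathcal{R}$. When $u_q$ is colored red by $(v,X,Y,k)$, \hyperref[obs:ccr_equation]{Observation \ref{obs:ccr_equation}} gives $(k+1)\operatorname{row}_{A(G)}(u_q) = \operatorname{row}_{A(G)}(v) + \sum_{x \in X}\operatorname{row}_{A(G)}(x) - \sum_{y\in Y}\operatorname{row}_{A(G)}(y)$; since by hypothesis $\{v\}\cup X\cup Y \subseteq \mathbf{B}$ and, by the color change rule, $v, X, Y$ are white (hence lie in $\mathbf{B}\setminus\{u_1,\dots,u_q\}$), restricting this identity to the $\mathbf{\bar B}$-columns turns it into a rational linear dependence expressing $\operatorname{row}_N(u_q)$ as a combination of rows of $N$ indexed by $\mathbf{B}\setminus\{u_1,\dots,u_q\}$. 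Following exactly the back-substitution argument from the first half of the proof of \hyperref[thm:null_G_null_AG]{Theorem \ref{thm:null_G_null_AG}}, after processing the whole sequence we conclude that every $\operatorname{row}_N(u_i)$, $i = 1,\dots,t$, lies in the span of the rows of $N$ indexed by $W' = \mathbf{B}\setminus\mathcal{R}$. Hence $\operatorname{rank}(N) \le |W'| = m - |\mathcal{R}|$, and combining with the nullity formula above gives $\operatorname{null}(A(G)) \ge 2|\mathcal{R}|$.

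I do not expect a serious obstacle: the corollary is essentially \hyperref[thm:null_G_null_AG]{Theorem \ref{thm:null_G_null_AG}} run inside a single side of the bipartition, with the extra factor of $2$ coming from the symmetric block $N^{T}$. The only points needing care are (i) justifying $\operatorname{rank}(A(G)) = 2\operatorname{rank}(N)$ for the bipartite adjacency matrix, and (ii) checking that the vertices $v, X, Y$ produced at each step really are white vertices of $\mathbf{B}$, so that every equation obtained is genuinely a dependence among rows of $N$ and the back-substitution never escapes $\mathbf{B}$ — both of which are immediate from the hypotheses and the definition of the color change rule.
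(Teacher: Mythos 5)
Your proof is correct, and it is essentially the argument the paper evidently intends (the corollary is stated without proof, immediately after Theorem \ref{thm:null_G_null_AG} and Observation \ref{obs:ccr_equation}): the dependence from Observation \ref{obs:ccr_equation}, restricted to the $\mathbf{\bar B}$-columns, shows the rows of the biadjacency matrix $N$ indexed by $\mathcal{R}$ are redundant, so $\operatorname{rank}(N)\le |\mathbf{B}|-|\mathcal{R}|$, and the block anti-diagonal identity $\operatorname{rank}(A(G))=2\operatorname{rank}(N)$ supplies the factor of $2$. The two points you flag as needing care are indeed the only ones, and both check out as you describe.
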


The \textit{Aztec diamond of order $ r $} is a diamond shape configuration of
2r(r+1) unit squares, as illustrated in \hyperref[fig:Aztec_diamond]{Figure \ref{fig:Aztec_diamond}}. The \textit{Aztec diamond graph of order $ r $},
denoted by $ \operatorname{AD}_r $, is the graph such that vertices $ v,u \in
V(\operatorname{AD}_r) $ are adjacent if and only if squares $ v $ and $ u $
share an edge in the Aztec diamond of order $ r $. The vertices of $
\operatorname{AD}_r $ are labeled by ordered pairs $ (i,j) $ where $ 1 \leq i,j
\leq 2r$, $ r+1 \leq i+j \leq 3r+1 $, and $ 0 \leq |j-i| \leq r $.
\bigskip

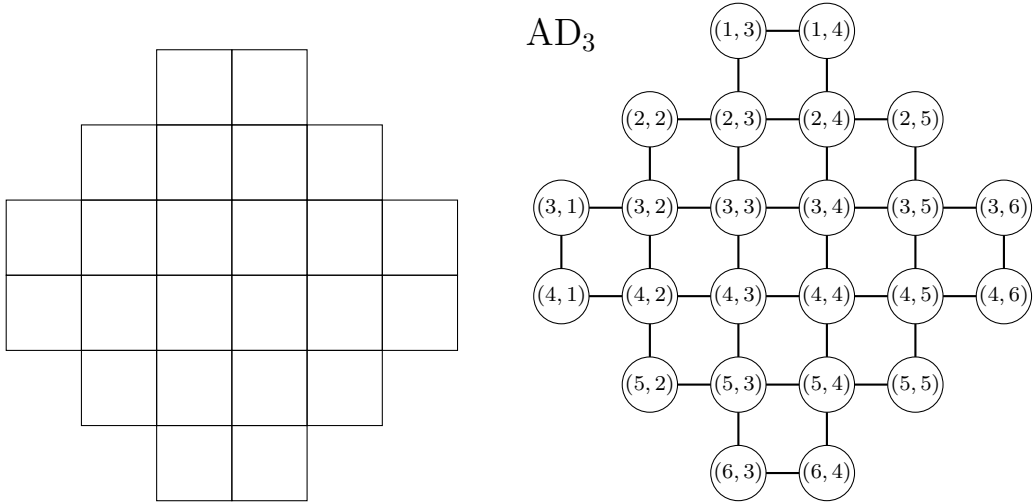
\begin{figure}[h]
\begin{center}
\begin{tikzpicture}
\draw (0,0) rectangle (1,1);
\draw (1,0) rectangle (2,1);

\draw (0,1) rectangle (1,2);
\draw (1,1) rectangle (2,2);
\draw (-1,1) rectangle (0,2);
\draw (2,1) rectangle (3,2);

\draw (0,2) rectangle (1,3);
\draw (1,2) rectangle (2,3);
\draw (-1,2) rectangle (0,3);
\draw (2,2) rectangle (3,3);
\draw (-2,2) rectangle (-1,3);
\draw (3,2) rectangle (4,3);

\draw (0,3) rectangle (1,4);
\draw (1,3) rectangle (2,4);
\draw (-1,3) rectangle (0,4);
\draw (2,3) rectangle (3,4);
\draw (-2,3) rectangle (-1,4);
\draw (3,3) rectangle (4,4);

\draw (0,4) rectangle (1,5);
\draw (1,4) rectangle (2,5);
\draw (-1,4) rectangle (0,5);
\draw (2,4) rectangle (3,5);

\draw (0,5) rectangle (1,6);
\draw (1,5) rectangle (2,6);
\end{tikzpicture}
\hspace{0.4cm}
\begin{tikzpicture}[scale=0.53,font=\scriptsize]
\GraphInit[vstyle=Normal]
\tikzset{VertexStyle/.style = {shape = circle, font=\large}}
\Vertex[L=\hbox{$\operatorname{AD}_3$},x=0cm,y=11cm]{G}
\tikzset{VertexStyle/.style = {shape = circle, draw=black,
minimum size=8pt, inner sep=0.2pt}}
\Vertex[L=\hbox{$\left(1, 3\right)$},x=4.44cm,y=11.1cm]{v0}
\Vertex[L=\hbox{$\left(1, 4\right)$},x=6.66cm,y=11.1cm]{v1}
\Vertex[L=\hbox{$\left(2, 2\right)$},x=2.22cm,y=8.88cm]{v2}
\Vertex[L=\hbox{$\left(2, 3\right)$},x=4.44cm,y=8.88cm]{v3}
\Vertex[L=\hbox{$\left(2, 4\right)$},x=6.66cm,y=8.88cm]{v4}
\Vertex[L=\hbox{$\left(2, 5\right)$},x=8.88cm,y=8.88cm]{v5}
\Vertex[L=\hbox{$\left(3, 1\right)$},x=0.0cm,y=6.66cm]{v6}
\Vertex[L=\hbox{$\left(3, 2\right)$},x=2.22cm,y=6.66cm]{v7}
\Vertex[L=\hbox{$\left(3, 3\right)$},x=4.44cm,y=6.66cm]{v8}
\Vertex[L=\hbox{$\left(3, 4\right)$},x=6.66cm,y=6.66cm]{v9}
\Vertex[L=\hbox{$\left(3, 5\right)$},x=8.88cm,y=6.66cm]{v10}
\Vertex[L=\hbox{$\left(3, 6\right)$},x=11.1cm,y=6.66cm]{v11}
\Vertex[L=\hbox{$\left(4, 1\right)$},x=0.0cm,y=4.44cm]{v12}
\Vertex[L=\hbox{$\left(4, 2\right)$},x=2.22cm,y=4.44cm]{v13}
\Vertex[L=\hbox{$\left(4, 3\right)$},x=4.44cm,y=4.44cm]{v14}
\Vertex[L=\hbox{$\left(4, 4\right)$},x=6.66cm,y=4.44cm]{v15}
\Vertex[L=\hbox{$\left(4, 5\right)$},x=8.88cm,y=4.44cm]{v16}
\Vertex[L=\hbox{$\left(4, 6\right)$},x=11.1cm,y=4.44cm]{v17}
\Vertex[L=\hbox{$\left(5, 2\right)$},x=2.22cm,y=2.22cm]{v18}
\Vertex[L=\hbox{$\left(5, 3\right)$},x=4.44cm,y=2.22cm]{v19}
\Vertex[L=\hbox{$\left(5, 4\right)$},x=6.66cm,y=2.22cm]{v20}
\Vertex[L=\hbox{$\left(5, 5\right)$},x=8.88cm,y=2.22cm]{v21}
\Vertex[L=\hbox{$\left(6, 3\right)$},x=4.44cm,y=0.0cm]{v22}
\Vertex[L=\hbox{$\left(6, 4\right)$},x=6.66cm,y=0.0cm]{v23}
\Edge[](v0)(v1)
\Edge[](v0)(v3)
\Edge[](v1)(v4)
\Edge[](v2)(v3)
\Edge[](v2)(v7)
\Edge[](v3)(v4)
\Edge[](v3)(v8)
\Edge[](v4)(v5)
\Edge[](v4)(v9)
\Edge[](v5)(v10)
\Edge[](v6)(v7)
\Edge[](v6)(v12)
\Edge[](v7)(v8)
\Edge[](v7)(v13)
\Edge[](v8)(v9)
\Edge[](v8)(v14)
\Edge[](v9)(v10)
\Edge[](v9)(v15)
\Edge[](v10)(v11)
\Edge[](v10)(v16)
\Edge[](v11)(v17)
\Edge[](v12)(v13)
\Edge[](v13)(v14)
\Edge[](v13)(v18)
\Edge[](v14)(v15)
\Edge[](v14)(v19)
\Edge[](v15)(v16)
\Edge[](v15)(v20)
\Edge[](v16)(v17)
\Edge[](v16)(v21)
\Edge[](v18)(v19)
\Edge[](v19)(v20)
\Edge[](v19)(v22)
\Edge[](v20)(v21)
\Edge[](v20)(v23)
\Edge[](v22)(v23)
\end{tikzpicture}
\end{center}
\caption{The Aztec diamond of order $ 3 $ and the Aztec diamond graph $
\operatorname{AD}_3 $.}
\label{fig:Aztec_diamond}
\end{figure}

\begin{prop}
\label{prop:zero_forcing_number_aztec_graph}
Let $ G $ be a Aztec diamond graph $ \operatorname{AD}_r $. Then $ \operatorname{Z}(G)
\leq 2r $.
\end{prop}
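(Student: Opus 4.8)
The plan is to exhibit an explicit zero forcing set of size $2r$, chosen so that the forcing propagates in a transparent way. For $1 \le j \le 2r$, the set of rows $i$ with $(i,j)\in V(\operatorname{AD}_r)$ is cut out by the three conditions $1\le i\le 2r$, $r+1\le i+j\le 3r+1$, and $-r\le i-j\le r$, each of which is an interval condition on $i$; hence this set is itself an interval, and in particular each nonempty column has a unique topmost vertex. Let $Z$ be the set of topmost vertices of the columns. Since $Z$ meets each of the $2r$ columns at most once, $|Z|\le 2r$ (in fact every column is nonempty, so $|Z|=2r$, but $\le 2r$ is all that is needed). I claim $Z$ is a zero forcing set, which immediately gives $\operatorname{Z}(\operatorname{AD}_r)\le 2r$.

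To prove the claim I colour $Z$ blue and give a chronological list of forces, proving by induction on $i$ that after the forces scheduled within rows $1,\dots,i$ every vertex of those rows is blue. The only structural fact I use is that two cells of the Aztec diamond are adjacent exactly when they are orthogonally adjacent in the integer grid, so the neighbours of a cell $(i-1,j)$ lie among $(i-2,j)$, $(i-1,j-1)$, $(i-1,j+1)$, and $(i,j)$. Base case $i=1$: every vertex of row $1$ is the topmost cell of its column (row $1$ is the minimal possible row), hence lies in $Z$ and is blue. Inductive step: fix $(i,j)$ in row $i$. If $(i,j)\in Z$ it is already blue; otherwise $(i,j)$ is not the topmost vertex of its column, so $(i-1,j)$ is a vertex, it lies in row $i-1$, and hence is blue by the inductive hypothesis. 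Each of the (at most) three other neighbours $(i-2,j)$, $(i-1,j-1)$, $(i-1,j+1)$ of $(i-1,j)$, when it exists, lies in row $i-1$ or row $i-2$, so it too is blue. Thus $(i,j)$ is the unique white neighbour of $(i-1,j)$ and the force $(i-1,j)\to(i,j)$ is legal. These forces, performed for all non-$Z$ vertices of row $i$, are mutually independent (each colours a distinct cell of row $i$, and the legality of each depends only on rows $i-1$ and $i-2$ being blue), so after them all of row $i$ is blue, completing the induction. Taking $i=2r$ shows $Z$ forces the whole graph.

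The proof above is largely coordinate bookkeeping once the right set $Z$ is identified, so the one real decision is that choice. The seemingly more natural candidate — a whole widest row (row $r$ or $r+1$, which has exactly $2r$ vertices) — does turn out to be a zero forcing set, but its forcing order is an intricate, interlocking cascade that spirals outward from the middle rows and is painful to describe uniformly in $r$; the "one endpoint per column" set avoids this entirely by giving the clean top-to-bottom sweep above. The routine facts I will need to pin down are the interval description of each column (hence the existence and uniqueness of a topmost cell, and that a cell with nothing directly above it is the topmost cell of its column) and, if I want $|Z|=2r$ rather than just $|Z|\le 2r$, that every column $1\le j\le 2r$ is nonempty; both are immediate from the defining inequalities of $\operatorname{AD}_r$.
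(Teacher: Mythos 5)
Your proof is correct and is essentially the paper's argument: your set $Z$ of topmost vertices of the $2r$ columns is exactly the paper's explicit set $\{(1,r),\dots,(r,1)\}\cup\{(1,r+1),\dots,(r,2r)\}$, and both proofs force downward along each column. Your write-up just verifies the legality of each force in more detail than the paper does.
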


\begin{proof}
We show that the set $ Z = \{(1,r),(2,r-1),(3,r-2),\dots,(r,1)\} \cup
\{(1,r+1),(2,r+2),(3,r+3),\dots,(r,2r)\}$ is a zero forcing set. For $ i \in
\{1,2,\dots,r\}$ in order $ (i,j) $ can force $ (i+1,j) $ as long as $ (i,j) $
and $ (i+1,j) $ exist.
\end{proof}

\begin{thm}
\label{thm:m_equals_z_aztec_diamond}
Let $ \operatorname{AD}_r $ be a Aztec diamond graph of order $ r $  and $ \mathcal{F} $
be an arbitrary field. Then
\[
\operatorname{M}(\mathcal{F},\operatorname{AD}_r)= \operatorname{Z}(\operatorname{AD}_r)=2r
\]
and field independent minimum rank is established with the universally optimal matrix $ A(G) $.
\end{thm}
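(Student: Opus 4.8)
The plan is to reduce the whole statement to Corollary~\ref{cor:M_equal_Z_field_independence} applied with $\lambda = 0$. Write $G = \operatorname{AD}_r$. Since $A(G) \in \mathcal{S}(G)$ we have $\operatorname{null}(A(G)) \le \operatorname{M}(G)$, and $\operatorname{M}(G) \le \operatorname{Z}(G) \le 2r$ by Proposition~\ref{prop:M_is_less_than_Z_over_any_field} and Proposition~\ref{prop:zero_forcing_number_aztec_graph}. Hence it suffices to prove the single inequality $\operatorname{null}(A(G)) \ge 2r$: then $\operatorname{null}(A(G)) = \operatorname{M}(G) = \operatorname{Z}(G) = 2r$, and because $\operatorname{null}(A(G) - 0\cdot I) = \operatorname{Z}(G)$, Corollary~\ref{cor:M_equal_Z_field_independence} immediately yields that $\operatorname{AD}_r$ has field independent minimum rank, that $A(G)$ (whose off-diagonal entries lie in $\{0,1\}$) is universally optimal, and that $\operatorname{M}(\mathcal{F}, \operatorname{AD}_r) = \operatorname{Z}(\operatorname{AD}_r) = 2r$ for every field $\mathcal{F}$.

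To obtain $\operatorname{null}(A(\operatorname{AD}_r)) \ge 2r$ I would use the bipartite structure together with Corollary~\ref{cor:null_of_bipartite}. A checkerboard colouring of the unit squares partitions $V(\operatorname{AD}_r)$ by the parity of $i+j$ into classes $\mathbf{B}$ and $\mathbf{\bar{B}}$, each of size $r(r+1)$ (the Aztec diamond admits a domino tiling). The neighbours of $(i,j)$ are exactly $(i\pm1, j)$ and $(i, j\pm1)$, so the neighbours of any vertex lying on a diagonal line $\{i-j = c\}$, or on a line $\{i+j = c\}$, all lie on the two adjacent parallel lines. From the coordinate description $1 \le i,j \le 2r$, $r+1 \le i+j \le 3r+1$, $|i-j| \le r$ one checks that every such line $L$ of maximal length $r+1$ carries an alternating $\pm 1$ vector $z_L$ in the kernel of $A(\operatorname{AD}_r)$: a vertex off $L$ has either no neighbour on $L$ or exactly two consecutive ones, which receive opposite signs in $z_L$ and cancel. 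There are $r$ maximal lines in each of the two directions; the maximal lines in one direction consist of vertices with $i+j$ of one fixed parity and those in the other direction of the opposite parity, so one of the two families, say $L_1, \dots, L_r$, consists of $r$ pairwise disjoint maximal lines lying inside $\mathbf{B}$.

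For each $\ell \in \{1,\dots,r\}$, symmetry of $A(\operatorname{AD}_r)$ turns $A(\operatorname{AD}_r)\,z_{L_\ell} = 0$ into the relation $\sum_{v \in L_\ell} (z_{L_\ell})_v\,\operatorname{row}_{A(G)}(v) = 0$ among the rows indexed by $L_\ell \subseteq \mathbf{B}$. Picking $u_\ell \in L_\ell$ with $(z_{L_\ell})_{u_\ell} = 1$ and solving the relation for $\operatorname{row}_{A(G)}(u_\ell)$ writes it as $\operatorname{row}_{A(G)}(v) + \sum_{x \in X}\operatorname{row}_{A(G)}(x) - \sum_{y \in Y}\operatorname{row}_{A(G)}(y)$ with $\{v\}\cup X\cup Y$ contained in $L_\ell \setminus \{u_\ell\}$, so by Observation~\ref{obs:ccr_equation} (taking the integer parameter to be $0$) the vertex $u_\ell$ can be coloured red by $(v, X, Y, 0)$, a colour using only vertices of $\mathbf{B}$. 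Since $L_1, \dots, L_r$ are disjoint, $u_1, \dots, u_r$ are distinct and the vertices needed to colour $u_\ell$ remain white throughout, so $\mathcal{R} = \{u_1, \dots, u_r\}$ is a red set of exactly the type required by Corollary~\ref{cor:null_of_bipartite}, giving $2r = 2|\mathcal{R}| \le \operatorname{null}(A(\operatorname{AD}_r))$. (Alternatively one can bypass Corollary~\ref{cor:null_of_bipartite} altogether: the full collection of $2r$ vectors $z_L$, over both directions, has pairwise disjoint supports and hence is linearly independent, so $\operatorname{null}(A(\operatorname{AD}_r)) \ge 2r$ directly.)

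The main obstacle is the boundary bookkeeping in the second paragraph: writing down the two families of diagonal lines from the coordinate constraints, verifying that a length-$(r+1)$ line is genuinely ``nested'' by its neighbouring lines---so that near the four tips of the diamond no off-line vertex ever sees exactly one vertex of the line---and confirming that there are exactly $r$ maximal lines in each direction, with all of one family lying inside $\mathbf{B}$. The remaining steps---the reduction through Corollary~\ref{cor:M_equal_Z_field_independence}, the observation that $A(\operatorname{AD}_r)$ has $\{0,1\}$ off-diagonal entries, and turning a kernel relation into a legal chronological list of red colourings---are routine.
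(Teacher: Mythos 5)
Your proposal is correct and follows essentially the same route as the paper: the maximal anti-diagonals you use are exactly the paper's sets $D_\ell$, the alternating-sign relation on each line is the paper's coloring of one vertex per $D_\ell$, and the conclusion goes through Corollary \ref{cor:null_of_bipartite} and the sandwich $2r \le \operatorname{null}(A(\operatorname{AD}_r)) \le \operatorname{M} \le \operatorname{Z} \le 2r$, with Corollary \ref{cor:M_equal_Z_field_independence} supplying field independence just as intended. Your parenthetical alternative --- observing that the $2r$ alternating kernel vectors on the two families of maximal diagonals have pairwise disjoint supports and hence give $\operatorname{null}(A(\operatorname{AD}_r)) \ge 2r$ directly --- is a clean shortcut that bypasses Corollary \ref{cor:null_of_bipartite} entirely, and the boundary check you flag (every off-line vertex sees $0$ or $2$ consecutive vertices of a maximal line) does hold.
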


\begin{proof}
Let $
D_\ell = \{ (i+\ell,r+2+\ell-i) | 1 \leq i \leq r+1 \} $ for $ 0 \leq \ell \leq
r-1$. Note that the $ D_\ell $ are independent sets and disjoint.
Let $ \mathbf{B} = D_0\cup D_1 \cup D_2 \cup \dots \cup D_{(r-1)} $. We show
that $ r $ vertices of $ \mathbf{B} $ can be colored red by other vertices of $ \mathbf{B}
$. The vertex $ (r+1,1) $ in the set $ D_0 $ can be colored red by $
\big((r,2),\{(i,j) \in D_0 | i < r , j \text{ is even}\},\{(i,j) \in D_0 | i < r , j
\text{ is odd}\},0\big)$. See \hyperref[fig:Aztex_diamond_coloring]{Figure
\ref{fig:Aztex_diamond_coloring}} for an example. Using a similar argument each $ D_\ell $ has a vertex
that can be colored red using only vertices from $ D_\ell $. Since $
\mathbf{B} $ is partitioned into $ r $ sets $ D_\ell $,
a total of $ r $ vertices that can be colored red. By
\hyperref[cor:null_of_bipartite]{Corollary \ref{cor:null_of_bipartite}}, $
2r \leq \operatorname{null}( \operatorname{AD}_r ) $. By
\hyperref[thm:null_G_null_AG]{Theorem \ref{thm:null_G_null_AG}} and
\hyperref[prop:zero_forcing_number_aztec_graph]{Proposition
\ref{prop:zero_forcing_number_aztec_graph}}, $ 2r \leq
\operatorname{null}(A(\operatorname{AD}_r)) \leq
\operatorname{M}(\operatorname{AD}_r) \leq
\operatorname{Z}(\operatorname{AD}_r) \leq 2r $.
\end{proof}

\begin{prop}
\label{prop:m_equals_z_circ_n_divides_8}
Let $ n $ be a multiple of $ 8 $. Then,
\[
\operatorname{Z}(\operatorname{Circ}[n,\{1,\tfrac{n}{2}-1\}]) \leq
\tfrac{n}{2}+2.
\]
\end{prop}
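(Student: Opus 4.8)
The plan is to prove the bound by exhibiting an explicit zero forcing set of size $\tfrac{n}{2}+2$ together with a chronological list of forces that colors the whole graph blue. Write $G=\operatorname{Circ}[n,\{1,\tfrac{n}{2}-1\}]$ on vertex set $\{0,1,\dots,n-1\}$, identified with $\mathbb{Z}_n$. The first step is to record the neighborhood structure: vertex $i$ is adjacent exactly to $i-1$, $i+1$, $i+(\tfrac{n}{2}-1)$, and $i-(\tfrac{n}{2}-1)$, and since $-(\tfrac{n}{2}-1)\equiv \tfrac{n}{2}+1 \pmod n$, the last of these is $i+\tfrac{n}{2}+1$. Because $n\geq 8$ these four neighbors are pairwise distinct, so $G$ is $4$-regular. (It is also worth noting, though not strictly needed, that $G$ is obtained from the cycle $C_{n/2}$ by replacing each vertex with two nonadjacent copies and each edge with a $K_{2,2}$; this is what motivates the set chosen below.)

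Next I would take $Z=\{0,1,2,\dots,\tfrac{n}{2}+1\}$, which has exactly $\tfrac{n}{2}+2$ elements, color it blue, and perform forces in the following order: for $j=1,2,\dots,\tfrac{n}{2}-2$ in turn, vertex $j$ forces vertex $j+\tfrac{n}{2}+1$. The core of the argument is to verify that, at the moment vertex $j$ acts, exactly one of its four neighbors is white, namely $j+\tfrac{n}{2}+1$. The neighbors $j-1$ and $j+1$ lie in $\{0,\dots,\tfrac{n}{2}+1\}=Z$ and were blue from the start. The neighbor $j+\tfrac{n}{2}-1$ lies in $Z$ when $j\leq 2$, and otherwise equals $(j-2)+\tfrac{n}{2}+1$, which is the vertex turned blue at step $j-2\geq 1$, hence blue before step $j$. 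The neighbor $j+\tfrac{n}{2}+1$ is not in $Z$ and is turned blue only at step $j$ itself, so it is still white when vertex $j$ acts; hence the force is legal. After all $\tfrac{n}{2}-2$ forces, the blue set is $Z\cup\{\tfrac{n}{2}+2,\tfrac{n}{2}+3,\dots,n-1\}=\{0,1,\dots,n-1\}$, so $Z$ is a zero forcing set and $\operatorname{Z}(G)\leq \tfrac{n}{2}+2$.

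I do not expect a genuine obstacle here: the whole argument is an explicit construction followed by routine verification. The two points needing care are the modular arithmetic identifying $i-(\tfrac{n}{2}-1)$ with $i+\tfrac{n}{2}+1$ and confirming the four neighbors of a vertex are distinct --- this is the one place $n\geq 8$ (equivalently $\tfrac{n}{2}-1\neq 1$) is used --- and the index bookkeeping needed to be certain that at step $j$ the three ``already blue'' neighbors really are blue and that the forced vertices are distinct and exhaust $\{0,\dots,n-1\}$. The full hypothesis $8\mid n$ is stronger than this proof requires (only $n$ even with $n\geq 8$ is used); it is presumably retained because the proposition will later be combined with a lower bound for $\operatorname{M}$, obtained via Theorem~\ref{thm:null_G_null_AG}, that does use the divisibility.
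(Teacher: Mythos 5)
Your proof is correct and takes essentially the same approach as the paper: an explicit set of $\tfrac{n}{2}+2$ cyclically consecutive vertices together with a chronological list of forces $j \to j+\tfrac{n}{2}+1$ (the paper uses $Z=\{0,1,\dots,\tfrac{n}{2},n-1\}$, which is your set rotated by one). Your verification that each force is legal is more detailed than the paper's, and your observation that only $n$ even with $n\geq 8$ is needed is accurate.
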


\begin{proof}
Let $ G = \operatorname{Circ}[n,\{1,\tfrac{n}{2}-1\}] $.
Then $ Z = \{ 0,1,2, \dots,\tfrac{n}{2}, n-1 \} $ is a
zero forcing set with forces $ 0 \to n/2 + 1, 1 \to n/2 + 2 , \cdots ,
n/2-3 \to n-2$. This shows that
$ \operatorname{Z}(G)\leq\tfrac{n}{2}+2
$
\end{proof}

\begin{figure}[h]
\begin{center}
\begin{tikzpicture}[scale=0.48,font=\scriptsize]
\GraphInit[vstyle=Normal]
\tikzset{VertexStyle/.style = {shape = circle, font=\large}}
\Vertex[L=\hbox{$\operatorname{AD}_{3_{(3,2)+(1,4)}}$},x=0cm,y=13cm]{G}
\tikzset{VertexStyle/.style = {shape = circle, draw=black,
minimum size=8pt, inner sep=0.2pt}}
\Vertex[L=\hbox{$\left(1, 3\right)$},x=4.44cm,y=11.1cm]{v0}
\Vertex[L=\hbox{$\left(1, 4\right)$},x=6.66cm,y=11.1cm]{v1}
\Vertex[L=\hbox{$\left(2, 4\right)$},x=6.66cm,y=8.88cm]{v4}
\Vertex[L=\hbox{$\left(2, 2\right)$},x=2.22cm,y=8.88cm]{v2}
\Vertex[L=\hbox{$\left(2, 3\right)$},x=4.44cm,y=8.88cm]{v3}
\Vertex[L=\hbox{$\left(2, 5\right)$},x=8.88cm,y=8.88cm]{v5}
\Vertex[L=\hbox{$\left(3, 1\right)$},x=0.0cm,y=6.66cm]{v6}
\Vertex[L=\hbox{$\left(3, 2\right)$},x=2.22cm,y=6.66cm]{v7}
\Vertex[L=\hbox{$\left(3, 3\right)$},x=4.44cm,y=6.66cm]{v8}
\Vertex[L=\hbox{$\left(3, 4\right)$},x=6.66cm,y=6.66cm]{v9}
\Vertex[L=\hbox{$\left(3, 5\right)$},x=8.88cm,y=6.66cm]{v10}
\Vertex[L=\hbox{$\left(3, 6\right)$},x=11.1cm,y=6.66cm]{v11}
\Vertex[L=\hbox{$\left(4, 1\right)$},x=0.0cm,y=4.44cm]{v12}
\Vertex[L=\hbox{$\left(4, 2\right)$},x=2.22cm,y=4.44cm]{v13}
\Vertex[L=\hbox{$\left(4, 3\right)$},x=4.44cm,y=4.44cm]{v14}
\Vertex[L=\hbox{$\left(4, 4\right)$},x=6.66cm,y=4.44cm]{v15}
\Vertex[L=\hbox{$\left(4, 5\right)$},x=8.88cm,y=4.44cm]{v16}
\Vertex[L=\hbox{$\left(4, 6\right)$},x=11.1cm,y=4.44cm]{v17}
\Vertex[L=\hbox{$\left(5, 2\right)$},x=2.22cm,y=2.22cm]{v18}
\Vertex[L=\hbox{$\left(5, 3\right)$},x=4.44cm,y=2.22cm]{v19}
\Vertex[L=\hbox{$\left(5, 4\right)$},x=6.66cm,y=2.22cm]{v20}
\Vertex[L=\hbox{$\left(5, 5\right)$},x=8.88cm,y=2.22cm]{v21}
\Vertex[L=\hbox{$\left(6, 3\right)$},x=4.44cm,y=0.0cm]{v22}
\Vertex[L=\hbox{$\left(6, 4\right)$},x=6.66cm,y=0.0cm]{v23}
\Edge[](v4)(v1)
\Edge[](v0)(v1)
\Edge[](v0)(v7)
\Edge[](v0)(v3)
\Edge[](v7)(v4)
\Edge[](v2)(v3)
\Edge[](v2)(v7)
\Edge[](v3)(v4)
\Edge[](v3)(v8)
\Edge[](v4)(v5)
\Edge[](v4)(v9)
\Edge[](v5)(v10)
\Edge[](v6)(v7)
\Edge[](v6)(v12)
\Edge[](v7)(v8)
\Edge[](v7)(v13)
\Edge[](v8)(v9)
\Edge[](v8)(v14)
\Edge[](v9)(v10)
\Edge[](v9)(v15)
\Edge[](v10)(v11)
\Edge[](v10)(v16)
\Edge[](v11)(v17)
\Edge[](v12)(v13)
\Edge[](v13)(v14)
\Edge[](v13)(v18)
\Edge[](v14)(v15)
\Edge[](v14)(v19)
\Edge[](v15)(v16)
\Edge[](v15)(v20)
\Edge[](v16)(v17)
\Edge[](v16)(v21)
\Edge[](v18)(v19)
\Edge[](v19)(v20)
\Edge[](v19)(v22)
\Edge[](v20)(v21)
\Edge[](v20)(v23)
\Edge[](v22)(v23)
\end{tikzpicture}
\hspace{0.1cm}
\begin{tikzpicture}[scale=0.48,font=\scriptsize]
\GraphInit[vstyle=Normal]
\tikzset{VertexStyle/.style = {shape = circle, font=\large}}
\Vertex[L=\hbox{$\operatorname{AD}_{3_{(3,2)+(1,4)-(2,3)}}$},x=0cm,y=13cm]{G}
\tikzset{VertexStyle/.style = {shape = circle, draw=black,
minimum size=8pt, inner sep=0.2pt}}
\Vertex[L=\hbox{$\left(1, 3\right)$},x=4.44cm,y=11.1cm]{v0}
\Vertex[L=\hbox{$\left(1, 4\right)$},x=6.66cm,y=11.1cm]{v1}
\Vertex[L=\hbox{$\left(2, 2\right)$},x=2.22cm,y=8.88cm]{v2}
\Vertex[L=\hbox{$\left(2, 3\right)$},x=4.44cm,y=8.88cm]{v3}
\Vertex[L=\hbox{$\left(2, 4\right)$},x=6.66cm,y=8.88cm]{v4}
\Vertex[L=\hbox{$\left(2, 5\right)$},x=8.88cm,y=8.88cm]{v5}
\Vertex[L=\hbox{$\left(3, 1\right)$},x=0.0cm,y=6.66cm]{v6}
\Vertex[L=\hbox{$\left(3, 2\right)$},x=2.22cm,y=6.66cm]{v7}
\Vertex[L=\hbox{$\left(3, 3\right)$},x=4.44cm,y=6.66cm]{v8}
\Vertex[L=\hbox{$\left(3, 4\right)$},x=6.66cm,y=6.66cm]{v9}
\Vertex[L=\hbox{$\left(3, 5\right)$},x=8.88cm,y=6.66cm]{v10}
\Vertex[L=\hbox{$\left(3, 6\right)$},x=11.1cm,y=6.66cm]{v11}
\Vertex[L=\hbox{$\left(4, 1\right)$},x=0.0cm,y=4.44cm]{v12}
\Vertex[L=\hbox{$\left(4, 2\right)$},x=2.22cm,y=4.44cm]{v13}
\Vertex[L=\hbox{$\left(4, 3\right)$},x=4.44cm,y=4.44cm]{v14}
\Vertex[L=\hbox{$\left(4, 4\right)$},x=6.66cm,y=4.44cm]{v15}
\Vertex[L=\hbox{$\left(4, 5\right)$},x=8.88cm,y=4.44cm]{v16}
\Vertex[L=\hbox{$\left(4, 6\right)$},x=11.1cm,y=4.44cm]{v17}
\Vertex[L=\hbox{$\left(5, 2\right)$},x=2.22cm,y=2.22cm]{v18}
\Vertex[L=\hbox{$\left(5, 3\right)$},x=4.44cm,y=2.22cm]{v19}
\Vertex[L=\hbox{$\left(5, 4\right)$},x=6.66cm,y=2.22cm]{v20}
\Vertex[L=\hbox{$\left(5, 5\right)$},x=8.88cm,y=2.22cm]{v21}
\Vertex[L=\hbox{$\left(6, 3\right)$},x=4.44cm,y=0.0cm]{v22}
\Vertex[L=\hbox{$\left(6, 4\right)$},x=6.66cm,y=0.0cm]{v23}
%
\Edge[](v0)(v3)
\Edge[](v2)(v3)
\Edge[](v4)(v1)
\Edge[](v0)(v1)
\Edge[](v3)(v4)
\Edge[](v3)(v8)
\Edge[](v4)(v5)
\Edge[](v4)(v9)
\Edge[](v5)(v10)
\Edge[](v6)(v7)
\Edge[](v6)(v12)
\Edge[](v7)(v13)
\Edge[](v8)(v9)
\Edge[](v8)(v14)
\Edge[](v9)(v10)
\Edge[](v9)(v15)
\Edge[](v10)(v11)
\Edge[](v10)(v16)
\Edge[](v11)(v17)
\Edge[](v12)(v13)
\Edge[](v13)(v14)
\Edge[](v13)(v18)
\Edge[](v14)(v15)
\Edge[](v14)(v19)
\Edge[](v15)(v16)
\Edge[](v15)(v20)
\Edge[](v16)(v17)
\Edge[](v16)(v21)
\Edge[](v18)(v19)
\Edge[](v19)(v20)
\Edge[](v19)(v22)
\Edge[](v20)(v21)
\Edge[](v20)(v23)
\Edge[](v22)(v23)
\end{tikzpicture}
\end{center}
\caption{Coloring $ (4,1) $ red with $ ((3,2),\{(1,4)\},\{(2,3)\},0) $ in the Aztec diamond graph $ \operatorname{AD}_3 $.}
\label{fig:Aztex_diamond_coloring}
\end{figure}

A \textit{circulant} graph, denoted by $ \operatorname{Circ}[n,S] $, is a graph
with vertex set $ \{ 0, 1, \dots, n-1 \} \subseteq \mathbb{Z} $ and
a \textit{connection set}  $ S \subseteq \{1,2,\dots, \frac{n}{2}\} \subseteq
\mathbb{Z}$, where the edge set of $ \operatorname{Circ}[n,S] $ is precisely
$ \big\{ \{i,i\pm s \} : s \in S \}\big\} $ with arithmetic performed modulo
$ n $ (see \hyperref[fig:circ_24_13_ex]{Figure \ref{fig:circ_24_13_ex}}). For
any $ a \in [n] $, the graphs $ \operatorname{Circ}[n,S] $ and
$ \operatorname{Circ}[n,aS]  $ are isomorphic whenever $ a $ and $ n $ are
relatively prime. Thus if there exists $ b \in S $ such that $ \gcd(b,n) = 1 $,
then $ 1 \in b^{-1}S $ and $ \operatorname{Circ}[n,S] \cong
\operatorname{Circ}[n,b^{-1}S]$. For simplicity, all circulant graphs
considered here have $ 1 $ in the connection set. 

\begin{figure}[ht]
\centering
\begin{tikzpicture}[scale=0.35,font=\scriptsize]
\GraphInit[vstyle=Normal]
\tikzset{VertexStyle/.style = {shape = circle, draw=black, minimum size=14pt, inner sep=0.5pt}}
\Vertex[L=\hbox{$0$},x=11.1cm,y=5.55cm]{v0}
\Vertex[L=\hbox{$1$},x=9.4744cm,y=9.4744cm]{v1}
\Vertex[L=\hbox{$2$},x=5.55cm,y=11.1cm]{v2}
\Vertex[L=\hbox{$3$},x=1.6256cm,y=9.4744cm]{v3}
\Vertex[L=\hbox{$4$},x=0.0cm,y=5.55cm]{v4}
\Vertex[L=\hbox{$5$},x=1.6256cm,y=1.6256cm]{v5}
\Vertex[L=\hbox{$6$},x=5.55cm,y=0.0cm]{v6}
\Vertex[L=\hbox{$7$},x=9.4744cm,y=1.6256cm]{v7}
\Edge[](v0)(v1)
\Edge[](v0)(v2)
\Edge[](v0)(v6)
\Edge[](v0)(v7)
\Edge[](v1)(v2)
\Edge[](v1)(v3)
\Edge[](v1)(v7)
\Edge[](v2)(v3)
\Edge[](v2)(v4)
\Edge[](v3)(v4)
\Edge[](v3)(v5)
\Edge[](v4)(v5)
\Edge[](v4)(v6)
\Edge[](v5)(v6)
\Edge[](v5)(v7)
\Edge[](v6)(v7)
\end{tikzpicture}
\hspace{2cm}
\begin{tikzpicture}[scale=0.35,font=\scriptsize]
\GraphInit[vstyle=Normal]
\tikzset{VertexStyle/.style = {shape = circle, draw=black, minimum size=14pt, inner sep=0.5pt}}
\Vertex[L=\hbox{$0$},x=11.1cm,y=5.55cm]{v0}
\Vertex[L=\hbox{$1$},x=9.4744cm,y=9.4744cm]{v1}
\Vertex[L=\hbox{$2$},x=5.55cm,y=11.1cm]{v2}
\Vertex[L=\hbox{$3$},x=1.6256cm,y=9.4744cm]{v3}
\Vertex[L=\hbox{$4$},x=0.0cm,y=5.55cm]{v4}
\Vertex[L=\hbox{$5$},x=1.6256cm,y=1.6256cm]{v5}
\Vertex[L=\hbox{$6$},x=5.55cm,y=0.0cm]{v6}
\Vertex[L=\hbox{$7$},x=9.4744cm,y=1.6256cm]{v7}
\Edge[](v0)(v1)
\Edge[](v0)(v3)
\Edge[](v0)(v5)
\Edge[](v0)(v7)
\Edge[](v1)(v2)
\Edge[](v1)(v4)
\Edge[](v1)(v6)
\Edge[](v2)(v3)
\Edge[](v2)(v5)
\Edge[](v2)(v7)
\Edge[](v3)(v4)
\Edge[](v3)(v6)
\Edge[](v4)(v5)
\Edge[](v4)(v7)
\Edge[](v5)(v6)
\Edge[](v6)(v7)
\end{tikzpicture}
\caption{ The circulants $ \operatorname{Circ}[8,\{1,2\}] $ and $ \operatorname{Circ}[8,\{1,3\}] $.}
\label{fig:circ_24_13_ex}
\end{figure}

\begin{thm}
\label{thm:m_equals_z_circ_n_divides_8}
Let $ n $ be a multiple of $ 8 $. Then,
\[
\operatorname{M}(\operatorname{Circ}[n,\{1,\tfrac{n}{2}-1\}])=
\operatorname{Z}(\operatorname{Circ}[n,\{1,\tfrac{n}{2}-1\}])=\tfrac{n}{2}+2
\]
and field independent minimum rank  is established  with the universally optimal matrix $ A(G) $.
\bigskip

\end{thm}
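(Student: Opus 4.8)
The plan is to pin all four quantities against one another. Write $G=\operatorname{Circ}[n,\{1,\tfrac{n}{2}-1\}]$ and $m=\tfrac{n}{2}+2$. Proposition~\ref{prop:m_equals_z_circ_n_divides_8} already gives $\operatorname{Z}(G)\le m$, Proposition~\ref{prop:M_is_less_than_Z_over_any_field} gives $\operatorname{M}(G)\le\operatorname{Z}(G)$, and since $A(G)\in\mathcal{S}(\mathbb{R},G)$ we have $\operatorname{M}(G)\ge\operatorname{null}(A(G))$. So everything reduces to showing $\operatorname{null}(A(G))\ge m$: this forces $\operatorname{null}(A(G))=\operatorname{M}(G)=\operatorname{Z}(G)=m$, and then Corollary~\ref{cor:M_equal_Z_field_independence} applied with $\lambda=0$ (since $\operatorname{null}(A(G))=\operatorname{Z}(G)$) yields field independence, universal optimality of $A(G)$, and $\operatorname{M}(\mathcal{F},G)=\operatorname{Z}(G)$ for every field $\mathcal{F}$.

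To obtain the lower bound on $\operatorname{null}(A(G))$ I would follow the red-set strategy used for the Aztec diamond in Theorem~\ref{thm:m_equals_z_aztec_diamond}. Since $\tfrac{n}{2}$ is even, both connection values $1$ and $\tfrac{n}{2}-1$ are odd, so $G$ is bipartite with parts $\mathbf{B}$ (even labels) and $\mathbf{\bar{B}}$ (odd labels), each of size $\tfrac{n}{2}$. Writing $\operatorname{row}_{A(G)}(k)=e_{k-1}+e_{k+1}+e_{k+\frac{n}{2}-1}+e_{k+\frac{n}{2}+1}$ with indices mod $n$, the key point is the telescoping identity
\[
\sum_{j=0}^{n/4-1}(-1)^{j}\,\operatorname{row}_{A(G)}(i+2j)=\bigl(1+(-1)^{n/4-1}\bigr)\bigl(e_{i-1}+e_{i+\frac{n}{2}-1}\bigr)=0
\]
for every $i$, where the last equality uses that $8\mid n$ makes $\tfrac{n}{4}-1$ odd. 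To verify it, split each row as $f_{k}+f_{k+n/2}$ with $f_{k}=e_{k-1}+e_{k+1}$; in the alternating sum the ``interior'' basis vectors $e_{i+2j+1}$ cancel against $e_{i+2(j+1)-1}$ in consecutive terms, and only the two endpoint contributions survive in each of the two halves, which then cancel against the corresponding endpoints of the shifted-by-$\tfrac{n}{2}$ copy.

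Now fix $i$ even. The identity rewrites $\operatorname{row}_{A(G)}(i)$ as a $\{-1,0,1\}$-combination of the rows $\operatorname{row}_{A(G)}(i+2),\dots,\operatorname{row}_{A(G)}(i+\tfrac{n}{2}-2)$ --- all even labels --- with the coefficient of $\operatorname{row}_{A(G)}(i+2)$ equal to $+1$; so by Observation~\ref{obs:ccr_equation}, vertex $i$ can be colored red by $(i+2,X,Y,0)$, taking $X$ to be the even helpers with coefficient $+1$ (other than $i+2$) and $Y$ those with coefficient $-1$. Coloring $0,2,4,\dots,\tfrac{n}{2}$ in this order, at step $t$ we color $2t$ using helpers from $\{2t+2,2t+4,\dots,2t+\tfrac{n}{2}-2\}$; because $2t+\tfrac{n}{2}-2\le n-2$ for $t\le\tfrac{n}{4}$, there is no wraparound and every helper is an even label strictly greater than $2t$, hence still white. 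Thus $\mathcal{R}=\{0,2,\dots,\tfrac{n}{2}\}\subseteq\mathbf{B}$ is a red set of size $\tfrac{n}{4}+1$ all of whose forces use only vertices of $\mathbf{B}$, and Corollary~\ref{cor:null_of_bipartite} gives $\operatorname{null}(A(G))\ge 2(\tfrac{n}{4}+1)=m$, completing the argument.

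The only real work is the telescoping identity together with the bookkeeping that the coloring of $0,2,\dots,\tfrac{n}{2}$ is legal; the boundary case $n=8$ is worth stating separately, since there $\tfrac{n}{4}-1=1$, the identity degenerates to $\operatorname{row}_{A(G)}(i)=\operatorname{row}_{A(G)}(i+2)$, and each force is simply $(i+2,\emptyset,\emptyset,0)$. As a cross-check (and an alternative that bypasses red sets entirely), one may instead diagonalize the circulant $A(G)$ directly: its eigenvalues are $2\bigl(1+(-1)^{j}\bigr)\cos\tfrac{2\pi j}{n}$ for $j=0,\dots,n-1$, which vanish for all $\tfrac{n}{2}$ odd values of $j$ and for the two even values $j=\tfrac{n}{4}$ and $j=\tfrac{3n}{4}$, giving $\operatorname{null}(A(G))=\tfrac{n}{2}+2$ once more.
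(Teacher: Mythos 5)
Your proof is correct, and its skeleton matches the paper's: both arguments sandwich $\tfrac{n}{2}+2 \le \operatorname{null}(A(G)) \le \operatorname{M}(G) \le \operatorname{Z}(G) \le \tfrac{n}{2}+2$, using Proposition \ref{prop:m_equals_z_circ_n_divides_8} for the upper bound, a red set of $\tfrac{n}{4}+1$ even-labeled vertices together with Corollary \ref{cor:null_of_bipartite} for the lower bound, and Corollary \ref{cor:M_equal_Z_field_independence} (with $\lambda=0$) for field independence and universal optimality. Where you genuinely diverge is in the row dependencies realizing the red set. The paper first observes the duplicate-row relation $N_G(v)=N_G(v+\tfrac{n}{2})$ and uses it to color each of $0,2,\dots,\tfrac{n}{2}-2$ with a single helper from the opposite half of $\mathbf{B}$, then colors the one remaining vertex $\tfrac{n}{2}$ by a separate alternating combination. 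You instead establish the single uniform identity $\sum_{j=0}^{n/4-1}(-1)^{j}\operatorname{row}_{A(G)}(i+2j)=0$ (valid precisely because $8\mid n$ forces $\tfrac{n}{4}-1$ to be odd) and apply it verbatim to every vertex $0,2,\dots,\tfrac{n}{2}$; your check that the helpers $\{2t+2,\dots,2t+\tfrac{n}{2}-2\}$ never wrap around and are never previously colored is the analogue of the paper's check that its helpers stay white. Your version is more systematic (one identity, no exceptional vertex), at the cost of each force using $\tfrac{n}{4}-1$ helpers instead of one. The closing circulant-eigenvalue computation $2\bigl(1+(-1)^{j}\bigr)\cos\tfrac{2\pi j}{n}$ is a correct independent confirmation that $\operatorname{null}(A(G))=\tfrac{n}{2}+2$ and would by itself give the lower bound over $\mathbb{R}$, though the field-independence conclusion still requires Corollary \ref{cor:M_equal_Z_field_independence}, which you correctly invoke.
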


\begin{proof}
First note that $ G $ is bipartite with partite set $ \mathbf{B} = \{ 2k \, |
\, 0 \leq k \leq \tfrac{n}{2} - 1 \}  $ and $ \mathbf{\bar{B}} = \{2k+1 \, | \, 0
\leq k \leq \tfrac{n}{2} - 1\} $.  We show that $ \frac{n}{4} + 1 $
vertices from $ \mathbf{B} $ can be colored red using only white vertices of $
\mathbf{B} $.  Note that for every vertex $ v $ in $ \{0,1,2, \dots ,
\tfrac{n}{2}-1\} $, $ v $ is adjacent to $ v+1, v-1, v+\tfrac{n}{2}-1,
v+\tfrac{n}{2}+1 $, and $ v + \tfrac{n}{2}$ is adjacent $ v+\tfrac{n}{2}+1,
v+\tfrac{n}{2}-1, v+\tfrac{n}{2}+\tfrac{n}{2}-1 \equiv v - 1 \bmod n,
v+\tfrac{n}{2}+\tfrac{n}{2}+1 \equiv v+1 \bmod n$. Hence, $ N_G(v) =
N_G(v+\tfrac{n}{2})$ and $ v $ can be colored red by $
(v+\tfrac{n}{2},\emptyset, \emptyset,0)$ where $ v \in \{ 0,2,4,\dots,
\tfrac{n}{2}-2 \}$. This shows that $ \tfrac{n}{4} $ vertices from $ \mathbf{B}
$ can be colored red. The vertex $ \tfrac{n}{2} $ can be colored red by $
(\tfrac{n}{2}+2,\{2i \, : \, 2 | i \text{ and } \tfrac{n}{2}+2 < 2i \leq n-1\},\{2i
\, : \, 2 \nmid i \text{ and } \tfrac{n}{2}+2 < 2i \leq n-1 \},0) $. 
Hence, the vertices of $ \{0,2,4,\dots,\tfrac{n}{2} \} $ can be colored red
with the vertices $ \mathbf{B} \setminus \{0,2,4,\dots,\tfrac{n}{2} \} $. By
\hyperref[cor:null_of_bipartite]{Corollary \ref{cor:null_of_bipartite}}, $
2(\tfrac{n}{4} + 1) = \tfrac{n}{2} + 2 \leq \operatorname{null}(G)  $. So by \hyperref[thm:null_G_null_AG]{Theorem \ref{thm:null_G_null_AG}} and by
\hyperref[prop:m_equals_z_circ_n_divides_8]{Proposition
\ref{prop:m_equals_z_circ_n_divides_8}} 
\[
\tfrac{n}{2} + 2 \leq  \operatorname{null}(A(G)) \leq \operatorname{M}(G) \leq \operatorname{Z}(G) \leq
\tfrac{n}{2} + 2.
\]
\end{proof}
\section{An application of the strong arnold property}
\label{sec:strong_arnold_property}
In this section, we use the Colin de Verdi\`ere type parameter $ \xi $ to show that 
the maximum nullity and zero forcing number of various families of graphs are
equal. 
\bigskip

A matrix $ A \in \mathcal{S}(G) $ is said to have the \textit{Strong Arnold
Property} (SAP) if there does not exist a nonzero symmetric matrix $ X $ having
the following three properties: (1) $ AX = 0 $, (2) $ A \circ X = 0 $, (3) $ I
\circ X = 0 $ where $ \circ $ is the Hadamard (entrywise) product. The Colin de
Verdi\`ere type parameter associated with the maximum nullity is \[ \xi(G) =
\max\{ \operatorname{null}(A)\, |\, A \in \mathcal{S}(G) \text{ and } A \text{
has the SAP}\}.  \] Clearly $ \xi(G) \leq \operatorname{M}(G) \leq
\operatorname{Z}(G) $ for all graphs $ G $. The parameter $ \xi $ was
introduced in 2005 in \cite{MR2181887} to gain more insight on the minimum rank
of a graph.

\begin{ex}
\label{ex:cycle_8_path_3}
By using SageMath (see \cite{SWforSAP}), $ A(C_8 \square P_3) $ has the SAP and $
\operatorname{null}(A(C_8 \square P_3)) = 6 $. By
\hyperref[thm:M_equal_cartesian_product]{Theorem
\ref{thm:M_equal_cartesian_product}},  $ \operatorname{M}(C_8 \square P_3) =
\operatorname{Z}(C_8 \square P_3) = 6 $. Therefore, $ \xi(C_8
\square P_3) =  \operatorname{M}(C_8 \square P_3) = \operatorname{Z}(C_8 \square P_3) = 6$.
\end{ex}

An edge \textit{contraction} of a graph $ G $ is defined to be a deletion of two
adjacent vertices $ v_1 $ and $ v_2 $ and an insertion of a vertex $ u $ such
that $ uv \in E(G) $ if and only if $ vv_1 \in E(G) $ or $ vv_2 \in E(G) $. A
graph $ H $ is a \textit{minor} of a graph $ G $ if $ H $ can be constructed
from $ G $ by performing edge deletions, vertex deletions, and/or contractions.
We write $ H \preceq G $ when $ H $ is a minor of $ G $. Note that $ G \preceq
G' \preceq G''$ implies $ G \preceq G'' $.

\begin{obs}
\label{obs:cartesian_product_minor}
Let $ 3 \leq k \leq n $ and $ 1 \leq r \leq t $. Then $ C_k \square P_r \preceq
C_n \square P_t $.
\end{obs}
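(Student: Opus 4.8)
The plan is to exhibit $C_k \square P_r$ as a minor of $C_n \square P_t$ by a two-stage reduction: first handle the path factor, then the cycle factor. Since $C_n \square P_t$ has vertex set $\{(v,w) : v \in V(C_n), w \in V(P_t)\}$, I would write $V(P_t) = \{1,2,\dots,t\}$ with consecutive integers adjacent, and similarly index the vertices of $C_n$ by $\ZZ_n$. The first step is to reduce $P_t$ to $P_r$: since $r \leq t$, simply delete all vertices $(v,w)$ with $w > r$. The induced subgraph on the remaining vertices is exactly $C_n \square P_r$, because deleting a "top slice" of a Cartesian product with a path leaves the Cartesian product with the shorter path. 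So $C_n \square P_r \preceq C_n \square P_t$, and it remains to show $C_k \square P_r \preceq C_n \square P_r$ for $k \leq n$.

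For the second step, I would contract edges within the cycle factor. Think of $C_n \square P_r$ as $n$ disjoint copies of $P_r$ (the "columns" indexed by $\ZZ_n$) with a perfect matching between consecutive columns $i$ and $i+1$ joining $(i,w)$ to $(i+1,w)$ for each $w$. To turn the $n$-cycle of columns into a $k$-cycle of columns, pick $k$ of the $n$ column-indices to survive and, for each maximal run of columns strictly between two surviving columns, contract that run into its neighboring surviving column one vertex at a time. Concretely, contracting the edge $(i,w)(i+1,w)$ for every $w = 1,\dots,r$ merges column $i+1$ into column $i$: the merged column is still a copy of $P_r$, and its adjacencies to columns $i-1$ and $i+2$ are exactly the standard matchings (here one uses that column $i$ and column $i+1$ had identical neighborhoods in column $i+2$, namely the matching, so no multi-edges or extra edges are created beyond what $C_{n-1}\square P_r$ would have). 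Iterating this contraction $n-k$ times collapses the cycle of columns from length $n$ down to length $k$, yielding precisely $C_k \square P_r$. Finally, transitivity of the minor relation (noted in the excerpt: $G \preceq G' \preceq G''$ implies $G \preceq G''$) gives $C_k \square P_r \preceq C_n \square P_r \preceq C_n \square P_t$.

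The only genuinely delicate point is verifying that the column-contraction does not introduce stray edges or parallel edges, i.e.\ that after contracting $(i,w)(i+1,w)$ the result is honestly $C_{n-1}\square P_r$ rather than some graph with a doubled edge or a chord. This is where I would be careful: the edge-contraction definition in the paper says the new vertex $u$ is adjacent to $v$ iff $v$ was adjacent to one of the two contracted endpoints, so the merged vertex $(i{=}i{+}1, w)$ is adjacent to $(i,w\pm1)$ (from both, agreeing), to $(i-1,w)$ (from the old $(i,w)$), and to $(i+2,w)$ (from the old $(i+1,w)$) — and to nothing else, since in $C_n\square P_r$ with $n \geq 4$ columns $i$ and $i+1$ are not both adjacent to any common column. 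Hence the contracted graph is a simple graph isomorphic to $C_{n-1}\square P_r$, and the induction goes through down to $k$, which needs $k \geq 3$ so that $C_k$ is still a cycle (guaranteed by hypothesis). One should note the boundary case $n = 3$: then $k = 3 = n$ and there is nothing to contract, so the statement is immediate.
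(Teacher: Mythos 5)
Your proof is correct. The paper states this as an \emph{Observation} and supplies no argument at all, so there is nothing to compare against line by line; your two-stage reduction (delete the vertices $(v,w)$ with $w>r$ to shorten the path factor, then repeatedly contract a full column matching $(i,w)(i+1,w)$, $w=1,\dots,r$, to shorten the cycle factor from $C_n$ to $C_k$) is exactly the standard justification the author is implicitly relying on, and your care about the absence of parallel edges after contraction (using $n\ge 4$ during each contraction step and $k\ge 3$ as the stopping point) covers the only point where the argument could go wrong.
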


\begin{thm}{\rm\cite[Corollary 2.5]{MR2181887}}
\label{thm:minor_monotone}
If $ H $ is a minor of $ G $ then $ \xi(H) \leq \xi(G) $.
\end{thm}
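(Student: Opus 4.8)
The plan is to reduce the statement to the three elementary operations generating the minor relation and, in each, to \emph{lift} an optimal SAP matrix of the smaller graph up to the larger one. Since $H \preceq G$ means $H$ is obtained from $G$ by a finite sequence of edge deletions, vertex deletions, and edge contractions, and since $\preceq$ is transitive, it suffices to prove $\xi(H) \le \xi(G)$ when $H$ arises from $G$ by a single such move. In every case I would begin with a matrix $A_H \in \mathcal{S}(H)$ that has the SAP and realizes $\operatorname{null}(A_H) = \xi(H)$, and construct $A_G \in \mathcal{S}(G)$ that still has the SAP with $\operatorname{null}(A_G) \ge \xi(H)$; this yields $\xi(G) \ge \operatorname{null}(A_G) \ge \xi(H)$, as desired. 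Note the direction is forced: we push an SAP witness \emph{up} from $H$ to $G$, never the reverse.

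The organizing tool is the transversality reading of the SAP. The three conditions $AX=0$, $A \circ X = 0$, $I \circ X = 0$ say exactly that the symmetric matrix $X$ lies in the intersection of the normal space to the manifold $\mathcal{R}_r$ of symmetric matrices of rank $r = \operatorname{rank}(A)$ at $A$ with the orthogonal complement (in the trace inner product) of the linear pattern space of $G$; so $A$ has the SAP precisely when $\mathcal{R}_r$ meets that pattern space transversally at $A$. Two facts then drive the argument: the SAP is an open condition stable under small motions of $A$ along $\mathcal{R}_r$, and transversality with a smaller pattern space forces transversality with any larger one. For edge deletion $H = G-e$ these facts suffice: $A_H$ carries a forced zero in the $e$-position, its pattern space sits inside that of $G$, so $A_H$ already meets $\mathcal{R}_r$ transversally with respect to $G$, and the implicit function theorem lets me slide $A_H$ a short distance inside $\mathcal{R}_r$ to make the $e$-entry nonzero, landing in $\mathcal{S}(G)$ while preserving both the rank (hence $\operatorname{null}$) and the SAP. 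For vertex deletion $H = G-v$ I would border $A_H$ with a row and column indexed by $v$, taking the off-diagonal border supported exactly on $N_G(v)$ and lying in $\operatorname{range}(A_H)$, with a compatible new diagonal entry: a range border keeps $\ker(A_H)\times\{0\}$ inside the kernel so the nullity cannot drop, and the added degrees of freedom let the transversality check go through.

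The genuinely delicate case, which I expect to be the main obstacle, is edge contraction $H = G/e$ with $e = v_1 v_2$ contracted to a single vertex $u$ satisfying $N_H(u) = N_G(v_1)\cup N_G(v_2)$. Lifting here means \emph{splitting} the row and column of $u$ in $A_H$ into two rows and columns, one for $v_1$ and one for $v_2$, inserting the nonzero $(v_1,v_2)$ entry demanded by $e$ and distributing $u$'s neighbor entries so that the resulting $A_G$ lies in $\mathcal{S}(G)$. Two things can fail at once: the nullity can drop under the split, and the fragile transversality defining the SAP can be destroyed. I would control both by first using a generic perturbation of $A_H$ inside $\mathcal{R}_r$ to put $u$'s row in sufficiently general position, then performing the split so that the new $2\times 2$ block at $\{v_1,v_2\}$ is rank one in precisely the direction that preserves $\operatorname{null}$, and finally verifying the SAP for $A_G$ by showing that any $X$ meeting the three conditions for $A_G$ restricts to one meeting them for $A_H$, which must therefore vanish. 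The supergraph persistence established in the edge-deletion case is what lets me absorb the extra pattern freedom created by the split, completing the lift and hence the induction.
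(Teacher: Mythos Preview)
The paper does not prove this theorem at all: it is quoted verbatim as \cite[Corollary 2.5]{MR2181887} and used as a black box. There is therefore no ``paper's own proof'' to compare against; your proposal is an attempt to reprove a result the author simply imports.

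That said, your strategy is the standard one and is essentially what Barioli, Fallat, and Hogben do in the cited reference: reduce to the three elementary minor operations and, for each, lift an optimal SAP witness from $H$ to $G$ using the transversality reformulation of the Strong Arnold Property. Your treatment of edge deletion is correct and complete: the pattern space only grows when an edge is added, so transversality persists, and the implicit-function perturbation along the fixed-rank manifold $\mathcal{R}_r$ lands you in $\mathcal{S}(G)$ with the same nullity and the SAP intact. Your vertex-deletion sketch is also on the right track, though you should note that arranging the border vector $b$ to lie in $\operatorname{range}(A_H)$ \emph{and} have support exactly $N_G(v)$ is not automatic and in the original argument is handled by a genericity step.

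Where your sketch is genuinely incomplete is the contraction case, and you flag this yourself. Splitting the row and column of the contracted vertex $u$ into rows and columns for $v_1,v_2$ while (i) keeping the nullity, (ii) forcing the $(v_1,v_2)$ entry nonzero, and (iii) preserving the SAP is the heart of the proof, and your description (``rank-one $2\times2$ block in precisely the direction that preserves $\operatorname{null}$'', then ``any $X$ for $A_G$ restricts to one for $A_H$'') is a plan rather than an argument. In particular, the restriction claim is not obvious: an $X$ satisfying the SAP constraints for $A_G$ need not have its $\{v_1,v_2\}$-block vanish, so collapsing it to a candidate $X'$ for $A_H$ requires work. This is exactly the step where the original paper invests its effort, so if you want a self-contained proof you will need to fill this in carefully.
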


\begin{defn}
Let $ H  $ be a minor of $ G $. We say that $ H $ is a \textit{zero forcing minor}
of $ G $ if $ \operatorname{Z}(G) \leq \operatorname{Z}(H) $.
\end{defn}

\begin{thm}
\label{thm:m_equal_z_zeroforcing_minor_family}
Let $H$ be a zero forcing minor of $ G $ such that $
\xi(H) = \operatorname{Z}(H) $. Then $ \xi(G) =
\operatorname{M}(G) = \operatorname{Z}(G) = \operatorname{Z}(H) $.
\end{thm}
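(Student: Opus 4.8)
The plan is to chain together the inequalities $\xi(H)\le\xi(G)\le\operatorname{M}(G)\le\operatorname{Z}(G)$ with the hypotheses and force all of them to collapse to equalities. First I would invoke \hyperref[thm:minor_monotone]{Theorem \ref{thm:minor_monotone}}: since $H\preceq G$, we have $\xi(H)\le\xi(G)$. Next, recall the universally valid chain $\xi(G')\le\operatorname{M}(G')\le\operatorname{Z}(G')$ noted just after the definition of $\xi$; applied to $G$ this gives $\xi(G)\le\operatorname{M}(G)\le\operatorname{Z}(G)$. Now bring in the zero-forcing-minor hypothesis $\operatorname{Z}(G)\le\operatorname{Z}(H)$ and the assumption $\xi(H)=\operatorname{Z}(H)$.

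Stringing these together yields
\[
\operatorname{Z}(H)=\xi(H)\le\xi(G)\le\operatorname{M}(G)\le\operatorname{Z}(G)\le\operatorname{Z}(H),
\]
so every inequality in the chain is an equality. In particular $\xi(G)=\operatorname{M}(G)=\operatorname{Z}(G)$, and each of these equals $\operatorname{Z}(H)$, which is exactly the claimed conclusion.

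There is essentially no obstacle here — the statement is a short deduction from \hyperref[thm:minor_monotone]{Theorem \ref{thm:minor_monotone}}, the standing inequality $\xi\le\operatorname{M}\le\operatorname{Z}$, and the two defining hypotheses (that $H$ is a zero forcing minor of $G$, giving $\operatorname{Z}(G)\le\operatorname{Z}(H)$, and that $\xi(H)=\operatorname{Z}(H)$). The only thing to be careful about is stating the minor-monotonicity of $\xi$ in the right direction ($H\preceq G\implies\xi(H)\le\xi(G)$) and making sure the "sandwich" closes up, i.e.\ that the first and last terms of the chain coincide; both are immediate. If anything, the "work" in applying this theorem lies not in the proof but in verifying, for a given pair $(G,H)$, that $H$ is genuinely a zero forcing minor with $\xi(H)=\operatorname{Z}(H)$ — but that verification is external to the statement itself.
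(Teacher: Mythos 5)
Your proposal is correct and matches the paper's proof essentially verbatim: both chain $\operatorname{Z}(H)=\xi(H)\le\xi(G)\le\operatorname{M}(G)\le\operatorname{Z}(G)\le\operatorname{Z}(H)$ using Theorem \ref{thm:minor_monotone}, the standing inequality $\xi\le\operatorname{M}\le\operatorname{Z}$, and the zero forcing minor hypothesis. Nothing is missing.
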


\begin{proof}
\label{pf:m_equal_z_zeroforcing_minor_family}
Given that $ H $ is a zero forcing minor, 
$
\operatorname{Z}(G) \leq \operatorname{Z}(H).
$
By \hyperref[thm:minor_monotone]{Theorem \ref{thm:minor_monotone}},
$\xi(H) \leq \xi(G)$ and it follows that 
\[
\operatorname{Z}(H) = \xi(H)  \leq \xi(G) \leq \operatorname{M}(G) \leq
\operatorname{Z}(G) \leq \operatorname{Z}(H). 
\]
Thus the parameters $\xi(G), \operatorname{M}(G),  \operatorname{Z}(G)  $ are
equal to $ \operatorname{Z}(H) $.
\end{proof}

\begin{cor}
\label{cor:zi_equal_Z_cartesian_product_divisor}
Let $ G = C_n \square P_3 $ such that $ 8 \leq n $. Then 
\[
\xi(G) = \operatorname{M}(\mathcal{F},G)=
\operatorname{Z}(G) = 6.
\]
\end{cor}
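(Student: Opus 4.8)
The plan is to invoke Theorem \ref{thm:m_equal_z_zeroforcing_minor_family} with $G = C_n \square P_3$ and a suitable minor $H$. The natural candidate is $H = C_8 \square P_3$: by Example \ref{ex:cycle_8_path_3} we already know $\xi(C_8 \square P_3) = \operatorname{M}(C_8 \square P_3) = \operatorname{Z}(C_8 \square P_3) = 6$, so the hypothesis $\xi(H) = \operatorname{Z}(H)$ is satisfied. It remains only to check that $C_8 \square P_3$ is a \emph{zero forcing minor} of $C_n \square P_3$ for every $n \geq 8$, i.e.\ that it is a minor and that $\operatorname{Z}(C_n \square P_3) \leq \operatorname{Z}(C_8 \square P_3) = 6$.

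First I would record that $C_8 \square P_3 \preceq C_n \square P_3$; this is the case $k = 8$, $r = t = 3$ of Observation \ref{obs:cartesian_product_minor}, so it requires no new work. Next I would verify the zero forcing inequality $\operatorname{Z}(C_n \square P_3) \leq 6$ for all $n \geq 8$. This is a direct construction: label the vertices $(i, j)$ with $i \in \mathbb{Z}_n$ and $j \in \{1,2,3\}$, and take the blue set to consist of the six vertices in the first two ``columns,'' namely $\{(0,1),(0,2),(0,3),(1,1),(1,2),(1,3)\}$ (or, if one prefers a cleaner forcing pattern, two antipodal columns). One then checks that column $1$ forces column $2$ into the remaining white vertices one vertex at a time — each blue vertex $(i,j)$ on the ``frontier'' has exactly one white neighbor, namely $(i+1, j)$ — and the frontier sweeps around the cycle until all of $C_n \square P_3$ is blue. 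Hence $\operatorname{Z}(C_n \square P_3) \leq 6$, and combined with Theorem \ref{thm:M_equal_cartesian_product} (which gives $\operatorname{Z}(C_n \square P_3) = \min\{n, 6\} = 6$ for $n \geq 6$) we in fact have equality, so $C_8 \square P_3$ is genuinely a zero forcing minor of $C_n \square P_3$.

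With both conditions in hand, Theorem \ref{thm:m_equal_z_zeroforcing_minor_family} applies immediately and yields
\[
\operatorname{Z}(C_8 \square P_3) = \xi(C_8 \square P_3) \leq \xi(C_n \square P_3) \leq \operatorname{M}(C_n \square P_3) \leq \operatorname{Z}(C_n \square P_3) \leq \operatorname{Z}(C_8 \square P_3),
\]
forcing $\xi(G) = \operatorname{M}(G) = \operatorname{Z}(G) = 6$. Finally, since the statement asserts the field-dependent form $\operatorname{M}(\mathcal{F}, G)$, I would note that $\xi$ is defined over $\mathbb{R}$ but the chain above already pins $\operatorname{M}(\mathcal{F},G) \le \operatorname{Z}(G) = 6$ over any field (Proposition \ref{prop:M_is_less_than_Z_over_any_field}), while the real lower bound $6 \le \operatorname{M}(\mathbb{R},G)$ together with the SAP witness from Example \ref{ex:cycle_8_path_3} transfers: the matrix realizing $\xi$ on the minor lifts to a matrix in $\mathcal{S}(G)$, and over any field one can exhibit a rank-$(3n-6)$ matrix of the appropriate pattern. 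I expect the only real obstacle is this last point — being careful about whether ``field independent minimum rank'' is actually claimed here (the corollary as stated writes $\operatorname{M}(\mathcal{F},G)$ but the proof machinery $\xi$ is real), so I would either restrict the conclusion to $\mathcal{F} = \mathbb{R}$ or supply a separate universally-optimal-matrix argument for $C_n \square P_3$; everything else is a routine citation of Observation \ref{obs:cartesian_product_minor}, Example \ref{ex:cycle_8_path_3}, and Theorem \ref{thm:m_equal_z_zeroforcing_minor_family}.
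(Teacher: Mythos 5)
Your proposal is correct and is exactly the argument the paper intends: the corollary is stated without proof, but it follows from Theorem \ref{thm:m_equal_z_zeroforcing_minor_family} together with Observation \ref{obs:cartesian_product_minor}, Example \ref{ex:cycle_8_path_3}, and the bound $\operatorname{Z}(C_n \square P_3) \leq 6$ (already available from Theorem \ref{thm:M_equal_cartesian_product}) in just the way you describe. Your caveat about $\operatorname{M}(\mathcal{F},G)$ is also well taken: the $\xi$-based machinery only yields the lower bound over $\mathbb{R}$, and the paper itself notes after Example \ref{ex:generalized_petersen_not_field_independent} that $C_n \square P_t$ does not in general have field independent minimum rank, so the $\mathcal{F}$ in the corollary's statement is not supported by this argument and should be read as $\mathbb{R}$.
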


A $ k $-\textit{subdivision} of an edge, say $ uv $, is an operation on a graph in which
edge $ uv $ is deleted, vertices $ v_1,v_2, \dots, v_k $ and edges $ uv_1,
v_1v_2, v_2v_3, \dots, v_kv $ are added. We say the edge $ uv $ has been $ k
$-subdivided. Whenever $ k =  1 $ we simply say that the edge $ uv $ has been
subdivided. A $ k $\textit{-subdivision edge insertion} on the edges $ uv $ and $
wx $ is an operation on a graph in which edges $ uv $ and $ wx $ are $ k
$-subdivided adding vertices $v_1,v_2, \dots, v_k$ and $ x_1,x_2,\dots,x_k $,
respectively, and edges $ v_1x_1, v_2x_2, \dots, v_kx_k $ are added. The
\textit{cube graph} $ Q_3 $ can be described by an 8 - cycle containing a labeled
vertex set $ \{0,1, \dots, 7 \} $ and added edges $ \{
\{0,5\}, \{1,4\}, \{2,7\}, \{3,6\} \} $ as shown in \hyperref[fig:bidiakis_cubes]{Figure
\ref{fig:bidiakis_cubes}}.

\begin{prop}{\rm\cite[Lemma 8]{MR2419952}}
\label{prop:cube_graph_has_sap}
For the cube graph, $ \xi(Q_3) = 4 = \operatorname{M}(G) = \operatorname{Z}(G)$.
\end{prop}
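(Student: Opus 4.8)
The plan is to reduce the statement to the single lower bound $\xi(Q_3)\ge 4$ and then establish that by an explicit weighted construction.

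First I would note that $Q_3\cong C_4\square P_2$: the $3$-cube is two $4$-cycles joined by a perfect matching between corresponding vertices, which is precisely the edge set of $C_4\square P_2$. Hence \hyperref[thm:M_equal_cartesian_product]{Theorem~\ref{thm:M_equal_cartesian_product}}, with $k=4$ and $t=2$, gives $\operatorname{M}(Q_3)=\operatorname{Z}(Q_3)=\min\{4,4\}=4$. Since $\xi(H)\le\operatorname{M}(H)\le\operatorname{Z}(H)$ for every graph $H$, it remains only to exhibit a matrix $A\in\mathcal{S}(Q_3)$ with $\operatorname{null}(A)=4$ that has the Strong Arnold Property. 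Observe that the adjacency matrix cannot serve here, in contrast with the Aztec diamond graphs and circulants treated above: the eigenvalues of $A(Q_3)$ are $\{3,1,1,1,-1,-1,-1,-3\}$, so $A(Q_3)-\lambda I$ never has nullity $4$, and a genuine weighting is needed.

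For the construction I would use the bipartition of $Q_3$ into $B=\{0,2,4,6\}$ and $\bar B=\{1,3,5,7\}$ and look for
\[
A=\begin{bmatrix}0&N\\ N^{\top}&0\end{bmatrix},
\]
with $N\in\R^{4\times 4}$ having nonzero entries exactly at the positions corresponding to edges between $B$ and $\bar B$; this bipartite pattern is $J_4$ minus the permutation matrix of a $4$-cycle. Then $\operatorname{rank}(A)=2\operatorname{rank}(N)$, so I want $\operatorname{rank}(N)=2$, which is compatible with the prescribed zero pattern: writing $N=J_4+uv^{\top}$, one can choose $u$ with distinct nonzero coordinates and $v$ so that $u_iv_j=-1$ holds exactly at the four forbidden positions and nowhere else, forcing $\operatorname{rank}(N)=2$ and hence $\operatorname{null}(A)=8-4=4$.

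The main step is then to verify that some such $A$ has the SAP, i.e. that the only symmetric $X$ with $AX=0$, $A\circ X=0$, $I\circ X=0$ is $X=0$. The conditions $I\circ X=0$ and $A\circ X=0$ force $X$ to be supported only on non-edges of $Q_3$, so $AX=0$ reduces to a homogeneous linear system in those few entries, and one must check that for a suitable (numerically verifiable) choice of the free entries of $N$ this system is nonsingular. I expect this verification to be the crux — it is elementary linear algebra but genuinely depends on the chosen weights — and, consistent with the computational approach of \hyperref[ex:cycle_8_path_3]{Example~\ref{ex:cycle_8_path_3}}, the cleanest route is either to fix one explicit matrix $A$ and check its rank and the SAP system directly (by hand or via SageMath) or to cite the verification carried out in \cite{MR2419952}. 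Combined with the reduction above, $\xi(Q_3)\ge 4$ yields $\xi(Q_3)=\operatorname{M}(Q_3)=\operatorname{Z}(Q_3)=4$.
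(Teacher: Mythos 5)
The paper does not actually prove this statement: Proposition \ref{prop:cube_graph_has_sap} is imported verbatim as \cite[Lemma 8]{MR2419952}, so there is no in-paper argument to compare yours against. That said, your reduction is the right one and the details you do supply check out. Indeed $Q_3\cong C_4\square P_2$, so Theorem \ref{thm:M_equal_cartesian_product} gives $\operatorname{M}(Q_3)=\operatorname{Z}(Q_3)=4$; your observation that $\operatorname{spec}(A(Q_3))=\{3,1^{(3)},-1^{(3)},-3\}$ correctly rules out the adjacency-matrix techniques of Section \ref{sec:the_Nullity_of_a_Graph} and explains why a genuine weighting is needed; and the bipartite construction is sound --- with the paper's labelling the four missing bipartite entries do form a $4$-cycle pattern, and taking $u=(a,b,c,d)$ with distinct nonzero entries and $v$ determined by the four equations $u_iv_{\sigma(i)}=-1$ gives $N=J_4+uv^{\top}$ with exactly the required support and $\operatorname{rank}(N)=2$, hence $\operatorname{null}(A)=8-2\operatorname{rank}(N)=4$.

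The one genuine gap, which you yourself flag as the crux, is that the Strong Arnold Property is never verified for any specific matrix. Everything up to that point only re-establishes $\operatorname{M}(Q_3)=4$; the entire content of the cited lemma is that some optimal matrix has the SAP. A dimension count makes this plausible (symmetric $X$ with $AX=0$ form a $10$-dimensional space since $\operatorname{null}(A)=4$, symmetric matrices supported on the $16$ non-edges form a $16$-dimensional space, and $10+16<36$, so a generic intersection is trivial), but genericity is not a proof and SAP does fail for particular optimal matrices of particular graphs, so the claim really does depend on the chosen weights. As a self-contained argument the proposal is therefore incomplete at exactly the decisive step; as a proof by citation --- your stated fallback --- it coincides with what the paper does.
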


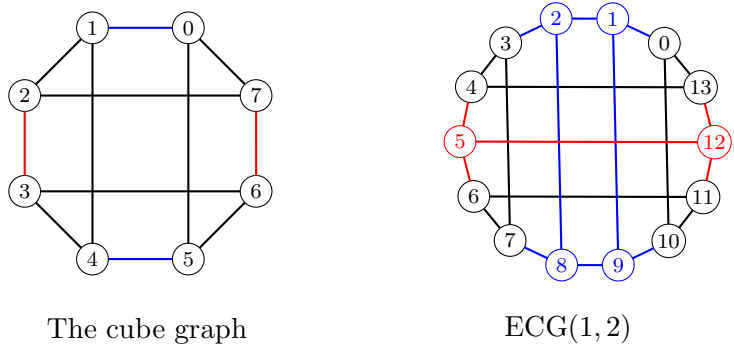
\begin{figure}[t]
\begin{tikzpicture}[scale=0.3, rotate=-22.5, font=\scriptsize]
\GraphInit[vstyle=Normal]
\tikzset{VertexStyle/.style = {shape = circle, draw=black,minimum size=12pt,
inner sep=1pt }} \Vertex[L=\hbox{$0$},x=5.55cm,y=11.1cm]{v0}
\Vertex[L=\hbox{$1$},x=1.6256cm,y=9.4744cm]{v1}
\Vertex[L=\hbox{$2$},x=0.0cm,y=5.55cm]{v2}
\Vertex[L=\hbox{$3$},x=1.6256cm,y=1.6256cm]{v3}
\Vertex[L=\hbox{$4$},x=5.55cm,y=0.0cm]{v4}
\Vertex[L=\hbox{$5$},x=9.4744cm,y=1.6256cm]{v5}
\Vertex[L=\hbox{$6$},x=11.1cm,y=5.55cm]{v6}
\Vertex[L=\hbox{$7$},x=9.4744cm,y=9.4744cm]{v7}
\draw[minimum size=10pt,font=\small](9cm,-2.0cm) node {The cube graph};
\Edge[](v0)(v5) \Edge[](v0)(v7) \Edge[](v1)(v2) \Edge[](v1)(v4) \Edge[](v2)(v7)
\Edge[](v3)(v4) \Edge[](v3)(v6) \Edge[](v5)(v6) \tikzset{EdgeStyle/.style =
{color=blue}} \Edge[](v4)(v5) \Edge[](v0)(v1) \tikzset{EdgeStyle/.style =
{color=red}} \Edge[](v6)(v7) \Edge[](v2)(v3)
\end{tikzpicture} \hspace{2cm} \begin{tikzpicture}[scale=0.3, rotate=-38.0,
font=\scriptsize] \GraphInit[vstyle=Normal]
\tikzset{VertexStyle/.style = {shape = circle, draw=black,minimum size=12pt,
inner sep=1pt }} \Vertex[L=\hbox{$0$},x=5.55cm,y=11.1cm]{v0}
\Vertex[L=\hbox{$3$},x=0.0cm,y=6.785cm]{v3}
\Vertex[L=\hbox{$4$},x=0.0cm,y=4.315cm]{v4}
\Vertex[L=\hbox{$6$},x=3.08cm,y=0.5496cm]{v6}
\Vertex[L=\hbox{$7$},x=5.55cm,y=0.0cm]{v7}
\Vertex[L=\hbox{$10$},x=11.1cm,y=4.315cm]{v10}
\Vertex[L=\hbox{$11$},x=11.1cm,y=6.785cm]{v11}
\Vertex[L=\hbox{$13$},x=8.02cm,y=10.5504cm]{v13} \tikzset{VertexStyle/.style =
{shape = circle, draw=black, minimum size=12pt, inner sep=1pt,color=blue}}
\Vertex[L=\hbox{$1$},x=3.08cm,y=10.5504cm]{v1}
\Vertex[L=\hbox{$2$},x=1.0992cm,y=9.0104cm]{v2}
\Vertex[L=\hbox{$8$},x=8.02cm,y=0.5496cm]{v8}
\Vertex[L=\hbox{$9$},x=10.0008cm,y=2.0896cm]{v9} \tikzset{VertexStyle/.style =
{shape = circle, draw=black, minimum size=12pt, inner sep=1pt,color=red}}
\Vertex[L=\hbox{$5$},x=1.0992cm,y=2.0896cm]{v5}
\Vertex[L=\hbox{$12$},x=10.0008cm,y=9.0104cm]{v12}
\draw[minimum size=10pt,font=\small](10cm,-1.5cm) node {
$\operatorname{ECG}(1,2)$}; \Edge[](v0)(v10) \Edge[](v0)(v13) \Edge[](v3)(v4)
\Edge[](v3)(v7) \Edge[](v4)(v13) \Edge[](v6)(v7) \Edge[](v6)(v11)
\Edge[](v10)(v11) \tikzset{EdgeStyle/.style = {color=blue}} \Edge[](v1)(v9)
\Edge[](v2)(v8) \Edge[](v0)(v1) \Edge[](v1)(v2) \Edge[](v2)(v3) \Edge[](v7)(v8)
\Edge[](v8)(v9) \Edge[](v9)(v10) \tikzset{EdgeStyle/.style = {color=red}}
\Edge[](v4)(v5) \Edge[](v5)(v6) \Edge[](v11)(v12) \Edge[](v12)(v13)
\Edge[](v5)(v12)
\end{tikzpicture} 
\centering
\caption{Applying two vertical and one horizontal subdivision edge insertion on the
cube graph gives $ \operatorname{ECG}(1,2) $.}
\label{fig:bidiakis_cubes}
\end{figure}

\begin{defn}(Extended cube graph)
\label{defn:extended_general_bidiakis_cube}
A \textit{vertical $ k-$subdivision edge insertion} on the cube graph is a $ k-
$subdivision edge insertion on the edges $ \{0,1\} $ and $ \{4,5\} $. A
\textit{horizontal $ k-$subdivision edge insertion} on the cube graph is a $ k-
$subdivision edge insertion on the edges $ \{2,3\} $ and $ \{6,7\} $, with the numbering
as in \hyperref[fig:bidiakis_cubes]{Figure \ref{fig:bidiakis_cubes}}.  An
\textit{extended cube graph}, denoted by $ \operatorname{ECG}(t,k) $, is the
cube graph with a horizontal $ t- $subdivision edge insertion, a vertical $ k-
$subdivision edge insertion, and a relabeling around the cycle containing
vertex set $ \{ 0,1, \dots, 7+2(t+k) \} $. 
\end{defn} 

\hyperref[fig:bidiakis_cubes]{Figure \ref{fig:bidiakis_cubes}} shows $
\operatorname{ECG}(1,2) $. Notice that $ \operatorname{ECG}(t,k) $ isomorphic
to the graph $ \operatorname{ECG}(k,t) $. For
simplicity we consider the extended cube graphs with $ t \leq k $.
The graph $ \operatorname{ECG}(1,1) $ is called the
Bidiakis cube.  It was shown in {\rm\cite[Proposition 5.1]{MR3760976}} that the
maximum nullity and zero forcing number of the Bidiakis cube are the same, motivating the creation of the extended cube graphs.

Observe that in $ \operatorname{ECG}(t,k) $, as we draw it, the top
endpoints of the vertical edges are $ 0, \dots, k+1 $, the left endpoints of
the horizontal edges are $ k+2, \dots, t+k+3 $, the lower endpoints of the
vertical edges are $ t+k+4, \dots, t+2k+5 = n-t-3 $, and the right endpoints of the
horizontal edges are $ t+2k+6,\dots,2t+2k+7=n-1 $.

\begin{obs}
\label{obs:zero_forcing_insertion_minor}
Let $ G $ be a graph constructed from the graph $ H $ by performing a 
subdivision edge insertion. Then $ H \preceq G$.
\end{obs}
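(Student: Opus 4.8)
The plan is to recover $H$ from $G$ using only edge deletions and edge contractions; since the definition of a minor permits exactly these operations (among others), this yields $H \preceq G$. First I would unpack the construction. By definition $G$ arises from $H$ by choosing two edges $uv, wx \in E(H)$, deleting them, introducing new vertices $v_1,\dots,v_k$ and $x_1,\dots,x_k$ together with the path edges $uv_1, v_1v_2,\dots,v_{k-1}v_k,v_kv$ and $wx_1,x_1x_2,\dots,x_{k-1}x_k,x_kx$, and then adding the $k$ matching edges $v_1x_1,\dots,v_kx_k$. (If ``a subdivision edge insertion'' is intended with $k=1$, the argument below specializes verbatim, with a single new vertex on each side.)

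The first reduction step is to delete from $G$ the matching edges $v_1x_1,\dots,v_kx_k$, obtaining a graph $G'$ that is precisely $H$ with the edges $uv$ and $wx$ each $k$-subdivided. The key point is that in $G'$ every internal path vertex has degree exactly $2$: setting $v_0 = u$, $v_{k+1}=v$, $x_0=w$, $x_{k+1}=x$, the vertex $v_i$ is adjacent only to $v_{i-1}$ and $v_{i+1}$, and likewise for each $x_i$. The second reduction step is to un-subdivide the path $u,v_1,\dots,v_k,v$ by $k$ successive edge contractions, each time contracting an edge incident with one of the still-present new vertices; because such a vertex has degree $2$, the contraction simply removes it and reconnects its two neighbours, so the path is shortened by one edge each time, and after $k$ steps it has collapsed to the single edge $uv$, with $u$ and $v$ again carrying their full $H$-neighbourhoods. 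Since $uv \in E(H)$ and $H$ is simple, no loop or parallel edge is created along the way. Performing the analogous $k$ contractions on the path $w,x_1,\dots,x_k,x$ restores $wx$; the two paths share no internal vertex, as all the $v_i$ and $x_i$ are new and pairwise distinct, so the two contraction sequences do not interfere even in the degenerate case where $uv$ and $wx$ share an endpoint in $H$. The graph produced at the end is exactly $H$.

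Therefore $H$ is obtained from $G$ by a sequence of edge deletions followed by edge contractions, so $H \preceq G$ by definition. The only place where care is needed is the contraction phase: one must confirm that each vertex being absorbed has degree exactly $2$ in the current graph — which is the reason the matching edges are deleted first — so that contracting an edge genuinely ``un-subdivides'' a subdivided edge rather than identifying two distinct vertices of $H$. Once that degree bookkeeping is in place, the rest of the argument is routine.
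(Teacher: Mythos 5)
Your proof is correct: deleting the inserted matching edges $v_1x_1,\dots,v_kx_k$ and then contracting the degree-two subdivision vertices back along each path recovers $H$, which is exactly the argument the observation implicitly relies on (the paper states it without proof, as an Observation). Your added care about the order of operations and the degenerate case where $uv$ and $wx$ share an endpoint is sound and does not change the substance.
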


\begin{prop}
\label{prop:extended_bidiakis_zero_forcing_number}
Let $ G $ be an extended cube graph $ \operatorname{ECG}(t,k) $. Then $ \operatorname{Z}(G) \leq 4  $.
\end{prop}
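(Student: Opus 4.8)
The plan is to exhibit an explicit zero forcing set of size $4$ and run the color change rule to completion; by definition this gives $\operatorname{Z}(G)\le 4$. It is cleanest to keep the original cube coordinates: write the outer $8$-cycle of $Q_3$ as $0-1-2-\cdots-7-0$ with chords $\{0,5\},\{1,4\},\{2,7\},\{3,6\}$, so that $G=\operatorname{ECG}(t,k)$ is obtained by replacing the edges $\{0,1\},\{4,5\}$ by paths $0-a_1-\cdots-a_k-1$ and $4-b_1-\cdots-b_k-5$ together with the rungs $a_ib_{k+1-i}$, and replacing $\{2,3\},\{6,7\}$ by paths $2-c_1-\cdots-c_t-3$ and $6-d_1-\cdots-d_t-7$ together with the rungs $c_jd_{t+1-j}$ (the pairing $a_i\leftrightarrow b_{k+1-i}$, rather than $a_i\leftrightarrow b_i$, is the one consistent with how the two paths lie on the outer cycle, and can be read off from Figure~\ref{fig:bidiakis_cubes}). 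We may assume $\max\{t,k\}\ge1$, since otherwise $G=Q_3$ and $\operatorname{Z}(Q_3)=4$ by Proposition~\ref{prop:cube_graph_has_sap}, and, swapping $t$ and $k$ via $\operatorname{ECG}(t,k)\cong\operatorname{ECG}(k,t)$ if necessary, that $k\ge1$.

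The set I propose is $B=\{0,\,a_1,\,b_k,\,5\}$; these four vertices induce the $4$-cycle $0-a_1-b_k-5-0$ (rail edges $0a_1,\ b_k5$, rung $a_1b_k$, chord $\{0,5\}$), and I claim $B$ is a zero forcing set. \emph{Phase 1: zip the vertical ladder.} The neighbours of $a_1$ are $0,a_2,b_k$, of which $0,b_k\in B$, so $a_1$ forces $a_2$; symmetrically $b_k$ forces $b_{k-1}$. Inductively, once $a_1,\dots,a_i$ and $b_k,\dots,b_{k+1-i}$ are blue, the rung $a_ib_{k+1-i}$ leaves $a_i$ with the single white neighbour $a_{i+1}$ and $b_{k+1-i}$ with the single white neighbour $b_{k-i}$, so the two rails zip forward together; the final two forces are $a_k\to1$ (neighbours $a_{k-1},1,b_1$) and $b_1\to4$ (neighbours $b_2,4,a_k$). \emph{Phase 2: fill in the eight cube vertices.} Now $0,1,4,5$ and all of $a_1,\dots,a_k,b_1,\dots,b_k$ are blue, so $0\to7$ (neighbours $a_1,7,5$), $5\to6$ (neighbours $b_k,6,0$), $1\to2$ (neighbours $a_k,2,4$), $4\to3$ (neighbours $b_1,3,1$). \emph{Phase 3: enter and zip the horizontal ladder.} Each of $2,3,6,7$ now has exactly one white neighbour, namely the nearest horizontal subdivision vertex (e.g.\ $2$ has neighbours $1,c_1,7$ with $1,7$ blue), so $c_1,c_t,d_1,d_t$ turn blue, and then the horizontal ladder zips from both ends exactly as in Phase~1, via the rungs $c_jd_{t+1-j}$. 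Hence every vertex becomes blue and $\operatorname{Z}(G)\le|B|=4$.

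The only step with any content is Phase~1: that the ``crossed'' ladder on $\{a_i\}\cup\{b_i\}$, with its two ends grafted into the rest of the cube rather than closed off by rungs, really does zip starting from the single rung $a_1b_k$. This is a short induction once the rung indexing is pinned down, and that indexing is the one thing it is important to get right. Phases~2 and~3 are then mechanical: each original cube vertex has degree $3$, and at the moment it is called on to force, two of its three neighbours are already blue, so the force is legal. Degenerate cases are handled the same way: when $k=1$, Phase~1 collapses to the two direct forces $a_1\to1$ and $b_1\to4$; when $t=0$, Phase~3 is vacuous and $2,3,6,7$ are simply forced to be blue along the unsubdivided edges $\{2,3\},\{6,7\}$. (As a check, for $\operatorname{ECG}(1,2)$ the set $B$ is $\{0,1,9,10\}$ in the vertex labelling of Figure~\ref{fig:bidiakis_cubes}, and one verifies directly that it is a zero forcing set.)
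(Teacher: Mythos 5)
Your proof is correct and takes essentially the same approach as the paper: exhibit an explicit four-vertex zero forcing set and zip the subdivided ladders. The paper's set is $\{0,\, n-t-3,\, n-t-2,\, n-1\}$ in its cyclic relabelling (the cube vertices $0,5,6,7$ in your coordinates), so the seed differs from your $4$-cycle $\{0,a_1,b_k,5\}$, but the forcing argument is the same in spirit, and your crossed rung indexing $a_ib_{k+1-i}$ is the correct reading of the figure and is the one the paper's own labelling uses.
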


\begin{proof}
\label{proof:zero_forcing_number_general_bidiakis_cube}
Let $ n $ be the number of vertices of $ G $ and let $ r = n - t - 3$. The
set $ \{ 0 ,r , r+1,  n-1 \} $ is a zero forcing set with simultaneous forces

\begin{center}
\begin{tikzpicture}
\matrix[row sep=1mm,column sep=1mm] {
\node {$0$}; & \node {$\to$}; & \node {$1$}; & \node {$\to$}; & \node {$2$}; & \node {$\to$}; &
\node {$\cdots$}; & \node {$\to$}; & \node
{$k+2$}; \\
\node {$r$}; & \node {$\to$}; & \node {$r-1$}; & \node {$\to$}; & \node
{$r-2$}; & \node
{$\to$}; & \node {$\cdots$}; & \node {$\to$}; & \node {$r-(k+2) = k+t+3.$}; \\
};
\end{tikzpicture}
\end{center}

These forcing sequences run simultaneously in parallel, i.e., $ 0 \to 1 $ and $ r
\to r-1$ are simultaneous, etc. After the above forces are completed, the
following forces run in parallel.

\begin{center}
\begin{tikzpicture}
\matrix[row sep=1mm,column sep=1mm] {
\node {$(k+2)$}; & \node {$\to$}; & \node {$(k+2)+1$}; & \node {$\to$}; & \node
{$(k+2)+3$}; & \node {$\to$}; &
\node {$\cdots$}; & \node {$\to$}; & \node {$(k+2)+t$};\\
\node {$(n-1)$}; & \node {$\to$}; & \node {$(n-1)-1$}; & \node {$\to$}; & \node
{$(n-1)-2$}; & \node {$\to$}; & \node {$\cdots$}; & \node {$\to$}; & \node
{$(n-1)-t = 2k+t+6$}; \\
};
\end{tikzpicture}
\qedhere
\end{center}

\end{proof}

\begin{cor}
\label{cor:extended_bidiakis_max_nullity_zero_forcing}
Let $ G $ be the extended cube graph $ \operatorname{ECG}(t,k)$. Then 
\[
\xi(G) = \operatorname{M}(G) = \operatorname{Z}(G) = 4.
\] 
\end{cor}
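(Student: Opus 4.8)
The plan is to invoke Theorem~\ref{thm:m_equal_z_zeroforcing_minor_family} with $H = Q_3$, the cube graph, and $G = \operatorname{ECG}(t,k)$. Three ingredients are needed: (1) $Q_3$ is a minor of $\operatorname{ECG}(t,k)$; (2) $\operatorname{Z}(\operatorname{ECG}(t,k)) \leq \operatorname{Z}(Q_3)$, i.e. $Q_3$ is in fact a \emph{zero forcing} minor; and (3) $\xi(Q_3) = \operatorname{Z}(Q_3)$. Ingredient (3) is exactly Proposition~\ref{prop:cube_graph_has_sap}, which gives $\xi(Q_3) = 4 = \operatorname{Z}(Q_3)$.

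For ingredient (1), recall that $\operatorname{ECG}(t,k)$ is built from $Q_3$ by a horizontal $t$-subdivision edge insertion followed by a vertical $k$-subdivision edge insertion. Each subdivision edge insertion produces a graph of which the previous graph is a minor, by Observation~\ref{obs:zero_forcing_insertion_minor}. Since the minor relation is transitive (as noted after the definition of minor), it follows that $Q_3 \preceq \operatorname{ECG}(t,k)$. For ingredient (2), I would combine this with Proposition~\ref{prop:extended_bidiakis_zero_forcing_number}, which gives $\operatorname{Z}(\operatorname{ECG}(t,k)) \leq 4 = \operatorname{Z}(Q_3)$; hence $Q_3$ is a zero forcing minor of $\operatorname{ECG}(t,k)$ by definition.

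With all three ingredients in hand, Theorem~\ref{thm:m_equal_z_zeroforcing_minor_family} (applied to the zero forcing minor $H = Q_3$ of $G = \operatorname{ECG}(t,k)$, using $\xi(Q_3) = \operatorname{Z}(Q_3)$) yields
\[
\xi(G) = \operatorname{M}(G) = \operatorname{Z}(G) = \operatorname{Z}(Q_3) = 4,
\]
which is the desired conclusion.

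The only genuine obstacle is verifying ingredient (2), since Observation~\ref{obs:zero_forcing_insertion_minor} and transitivity dispose of the minor relation immediately; but that obstacle is already resolved by Proposition~\ref{prop:extended_bidiakis_zero_forcing_number}, whose explicit parallel forcing set $\{0, r, r+1, n-1\}$ with $r = n - t - 3$ shows $\operatorname{Z}(\operatorname{ECG}(t,k)) \leq 4$. So in practice the corollary is a short chaining of three previously established facts, and the main thing to be careful about is that the hypothesis $t \leq k$ (the simplifying normalization in Definition~\ref{defn:extended_general_bidiakis_cube}) is harmless here, since $\operatorname{ECG}(t,k) \cong \operatorname{ECG}(k,t)$ and all the quantities involved are isomorphism invariants.
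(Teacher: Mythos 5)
Your proposal is correct and follows essentially the same route as the paper: take $H=Q_3$, use Proposition~\ref{prop:cube_graph_has_sap} for $\xi(Q_3)=\operatorname{Z}(Q_3)=4$, and apply Theorem~\ref{thm:m_equal_z_zeroforcing_minor_family}. In fact your writeup is slightly more careful than the paper's, which attributes the fact that $Q_3$ is a zero forcing minor of $\operatorname{ECG}(t,k)$ to Theorem~\ref{thm:m_equal_z_zeroforcing_minor_family} itself, whereas you correctly source it to Observation~\ref{obs:zero_forcing_insertion_minor} together with Proposition~\ref{prop:extended_bidiakis_zero_forcing_number}.
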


\begin{proof}
\label{pf:extended_bidiakis_max_nullity_zero_forcing}
Let $ H $ be the cube graph. By \hyperref[prop:cube_graph_has_sap]{Proposition
\ref{prop:cube_graph_has_sap}}, $ \xi(H) = \operatorname{Z(H)=4} $. By
\hyperref[thm:m_equal_z_zeroforcing_minor_family]{Theorem
\ref{thm:m_equal_z_zeroforcing_minor_family}}, $ H $ is a zero forcing minor
of $ G $. Thus $ \xi(G) = \operatorname{M}(G) = \operatorname{Z}(G)=4 $ by
\hyperref[thm:m_equal_z_zeroforcing_minor_family]{Theorem
\ref{thm:m_equal_z_zeroforcing_minor_family}}.
\end{proof}

\begin{obs}
\label{obs:first_iso_2k}
For positive integer $ k $, the circulant $ \operatorname{Circ}[4k,\{1,3,\dots,2k-1\}] =
K_{2k,2k} $ and the circulant $ \operatorname{Circ}[4k+2,\{1,3,\dots,2k+1\}] =
K_{2k+1,2k+1}.$
\end{obs}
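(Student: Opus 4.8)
The plan is to verify directly from the definition of the circulant graph that the connection set $\{1,3,\dots,2k-1\}$ on $\mathbb{Z}_{4k}$ produces exactly the edges of the complete bipartite graph with parts given by the even and odd residues. First I would recall that $\operatorname{Circ}[4k,S]$ has vertex set $\mathbb{Z}_{4k}$ and edge set $\{\{i,i+s\} : i \in \mathbb{Z}_{4k},\ s \in S\}$ with arithmetic mod $4k$. Since every element of $S = \{1,3,\dots,2k-1\}$ is odd, each edge joins a vertex $i$ to a vertex $i+s$ of opposite parity; hence the even residues $\mathbf{B} = \{0,2,\dots,4k-2\}$ and the odd residues $\overline{\mathbf{B}} = \{1,3,\dots,4k-1\}$ form an independent-set bipartition, so $\operatorname{Circ}[4k,S]$ is a subgraph of $K_{2k,2k}$.

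The substantive step is the reverse inclusion: showing every even–odd pair is actually an edge. Given an even vertex $u$ and an odd vertex $w$, consider the difference $w - u \bmod 4k$, which is an odd residue, so $w - u \in \{1,3,\dots,4k-1\}$. I would argue that this set of $2k$ odd residues is precisely $S \cup (-S) = \{\pm 1, \pm 3, \dots, \pm(2k-1)\} \bmod 4k$, because $-s \equiv 4k - s$ and as $s$ ranges over $\{1,3,\dots,2k-1\}$ the values $4k-s$ range over $\{2k+1,2k+3,\dots,4k-1\}$, which together with $\{1,3,\dots,2k-1\}$ exhausts all odd residues mod $4k$. Hence $w - u \equiv s$ or $w - u \equiv -s$ for some $s \in S$, so $\{u,w\}$ is an edge of $\operatorname{Circ}[4k,S]$. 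This gives $K_{2k,2k} \subseteq \operatorname{Circ}[4k,S]$ and therefore equality.

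The case $\operatorname{Circ}[4k+2,\{1,3,\dots,2k+1\}] = K_{2k+1,2k+1}$ is handled identically: the connection set again consists of odd residues, so the even/odd bipartition of $\mathbb{Z}_{4k+2}$ into two independent sets of size $2k+1$ shows the circulant is a subgraph of $K_{2k+1,2k+1}$; and the $2k+1$ odd residues mod $4k+2$ are exactly $\{1,3,\dots,2k+1\} \cup \{2k+3,\dots,4k+1\} = S \cup (-S)$ (here $-(2k+1) \equiv 2k+1 \bmod 4k+2$, so $2k+1$ is its own negative and the union has the right size), so every even–odd difference lies in $S \cup (-S)$ and every such pair is an edge. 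I do not anticipate a genuine obstacle here; the only point requiring a moment's care is the bookkeeping that $S \cup (-S)$ covers all odd residues without omission or unintended collision — in particular noting in the first case that $-s \not\equiv s$ for all $s \in S$ (since $4k - s = s$ would force $s = 2k$, which is even), and in the second case that the single coincidence $-(2k+1)\equiv 2k+1$ is exactly what makes the count $|S \cup (-S)| = 2k+1$ come out right.
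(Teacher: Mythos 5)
Your argument is correct and complete: the parity observation shows the circulant is bipartite on the even/odd residues, and the identification of $S \cup (-S)$ with the full set of odd residues (including the careful count in the $4k+2$ case, where $-(2k+1)\equiv 2k+1$) gives the reverse inclusion. The paper states this as an unproved observation, and your direct verification from the definitions is exactly the intended justification.
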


\hyperref[prop:zero_forcing_number_non_consecutive_circulants]{Proposition
\ref{prop:zero_forcing_number_non_consecutive_circulants}} and 
\hyperref[thm:M_equal_Z_consecutive_circulants]{Theorem
\ref{thm:M_equal_Z_consecutive_circulants}} below  were found by several groups in
2009 and 2010 but not published. Some of these results were also published in
\cite{MR3440139}. We state these results and give formal proofs of
the results for clarity.

\begin{prop}{\rm\cite[Proposition 2.1]{MRcirc}}
\label{prop:zero_forcing_number_non_consecutive_circulants}
Let $ G $ be a circulant graph $ \operatorname{Circ}[n , S] $ and let $ m =
\max\{i | i \in S\} $. Then $ \operatorname{Z}(G) \leq 2m $.
\end{prop}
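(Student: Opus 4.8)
The plan is to produce an explicit zero forcing set of size $2m$. The natural candidate is the block of $2m$ consecutive vertices $Z = \{0,1,\dots,2m-1\}$, with indices read modulo $n$. Since $S \subseteq \{1,\dots,\lfloor n/2\rfloor\}$ forces $m \le n/2$, we have $2m \le n$, so $|Z| = 2m$; and if $2m = n$ then $Z = V(G)$ and the bound is immediate, so the first move is to dispose of that degenerate case and assume $2m < n$.

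The engine of the argument is the locality of circulants: every neighbor of a vertex $i$ is of the form $i \pm s$ with $s \in S$, hence $1 \le s \le m$, so all neighbors of $i$ lie in the cyclic interval $\{i-m,\dots,i+m\}$. Using this, I would show that the vertices $2m, 2m+1, \dots, n-1$ can be colored blue one at a time, in that order, vertex $t$ being forced by vertex $t-m$. At the step where $t$ is about to be forced, the blue set is exactly $\{0,1,\dots,t-1\}$. The forcing vertex $u = t-m$ is blue because $m \le t-m < t$. Its neighbors are the vertices $u \pm s$, $s \in S$. For the ``lower'' neighbors, $u - s = t-m-s \ge t-2m \ge 0$ and $u-s \le t-m-1 < t$, so every lower neighbor is blue. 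For the ``upper'' neighbors, $u + s = t-m+s \le t$, with $u+s = t$ precisely when $s = m$ (and $m \in S$ since $m = \max\{i : i \in S\}$), while $s < m$ gives $u+s < t$, hence blue. Therefore $t$ is the unique white neighbor of $u$, so $u \to t$ is a legitimate force; iterating from $t = 2m$ up to $t = n-1$ turns all of $V(G)$ blue.

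I expect the only real care to be needed in controlling wraparound modulo $n$: the whole point of peeling off the case $2m = n$ and of the inequalities $0 \le t - 2m$ and $t \le n-1$ is to guarantee that the neighbor indices $t-m-s$ and $t-m+s$ are honest elements of $\{0,1,\dots,n-1\}$ that do not secretly coincide with other residues through the cyclic identification; once that is pinned down, the rest is a one-line monotonicity check (lower neighbors stay below $t$ because $t \ge 2m$, and the upper neighbors other than $t$ stay below $t$ because $s < m$ strictly). Concretely I would display the chronological list of forces $m \to 2m$, $m+1 \to 2m+1$, $\ldots$, $n-1-m \to n-1$ after $Z$ is colored, verify the single-white-neighbor condition at each step as above, and conclude $\operatorname{Z}(G) \le |Z| = 2m$.
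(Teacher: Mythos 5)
Your proposal is correct and follows essentially the same route as the paper: both take $Z=\{0,1,\dots,2m-1\}$ and force via $m+i \to 2m+i$ for $i=0,1,\dots,n-2m-1$, checking that all other neighbors of the forcing vertex already lie among the blue vertices. Your version merely adds some extra care (the degenerate case $2m=n$ and the explicit wraparound bookkeeping) that the paper leaves implicit.
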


\begin{proof}
\label{pf:zero_forcing_number_non_consecutive_circulants}
We will show that $ Z = \{ 0 , 1 , \dots, {2m-1} \} $ is a zero forcing set.
Suppose $ s \in S $ and $ s \neq m $. Then $ 1 \leq s < m$ and it follows that
$ {m \pm s }  \in {Z} $. If $ s = m $, then $ m - s = m - m = 0 $ which implies $ {m-s}
\in {Z} $. This shows that all neighbors of $ {m} $ except for $ 2m $ are in $
Z $; clearly $ m \in Z $. Hence $ {m} $ can force $ {2m} $. Using a similar
argument $ {m + i} $ forces $ {2m + i}$ for $ i \in \{1, 2, \dots, n -2m -1 \}
$. A forcing sequence is listed as 
\begin{center}
\resizebox{16cm}{1cm}{
\begin{tikzpicture}
\matrix[row sep=1mm,column sep=1mm] {
\node {${m} \to {2m}, \, {m+1} \to {2m+1}, \, \dots \, , \,{m+(n-2m-1) = n-m-1} \to
{2m+(n-2m-1) = n-1}. $\qedhere};\\
};
\end{tikzpicture}
}
\end{center}
\end{proof}

\begin{obs}
\label{obs:circulant_minor}
Let $ n $ be a multiple of $ k $, $ G = C_{n/k} \, \square \, P_k$, and $ H =
\operatorname{Circ}[n,\{1,k\}] $. Then $ G \preceq H $. This is illustrated in
\hyperref[fig:cycle_8_path_3]{Figure \ref{fig:cycle_8_path_3}}.
\end{obs}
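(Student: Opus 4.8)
The plan is to prove the stronger statement that $G = C_{n/k}\,\square\,P_k$ is a spanning subgraph of $H = \operatorname{Circ}[n,\{1,k\}]$; since a spanning subgraph arises by edge deletions alone, this yields $G \preceq H$ immediately from the definition of a minor. So the whole proof reduces to identifying the vertex sets compatibly and matching up the edge classes.

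Write $m = n/k$ (an integer by hypothesis), take $C_m$ on vertex set $\mathbb{Z}_m$ and $P_k$ on vertex set $\{0,1,\dots,k-1\}$, and identify $V(H)=\{0,1,\dots,n-1\}$ with $V(G)=\mathbb{Z}_m\times\{0,1,\dots,k-1\}$ via $i \longleftrightarrow (q,r)$, where $i = qk+r$ with $0 \le q \le m-1$ and $0 \le r \le k-1$ (this is division with remainder, hence a bijection precisely because $n = mk$). Under this identification I would inspect the two families of edges of $H$ separately. The step-$k$ edges $\{i,\,i+k \bmod n\}$ become $\{(q,r),\,(q+1 \bmod m,\,r)\}$, and as $i$ ranges over $\mathbb{Z}_n$ these are exactly the $C_m$-fibre edges of $G$ (one $m$-cycle for each fixed $r$). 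The step-$1$ edges $\{i,\,i+1 \bmod n\}$ split into two cases: when $r \le k-2$ one obtains $\{(q,r),\,(q,r+1)\}$, a path edge of $G$; when $r = k-1$, so $i = (q+1)k-1$ and $i+1 = (q+1)k$, one obtains $\{(q,k-1),\,(q+1 \bmod m,\,0)\}$. Hence
\[
E(H) \;=\; E(G)\ \cup\ \bigl\{\,\{(q,k-1),\,(q+1 \bmod m,\,0)\} : q \in \mathbb{Z}_m \,\bigr\},
\]
and the two sets on the right are disjoint, so deleting the $m$ ``wrap'' edges of the second set from $H$ leaves precisely $G$.

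Consequently $G$ is a spanning subgraph of $H$, so in particular $G \preceq H$, as claimed; the figure can illustrate the instance $C_8\,\square\,P_3 \preceq \operatorname{Circ}[24,\{1,3\}]$. The only point needing genuine care — the ``hard'' part, though it remains routine — is the bookkeeping around the modulus: checking that $i \mapsto (q,r)$ is a bijection onto $\mathbb{Z}_m\times\{0,\dots,k-1\}$, and that the step-$1$ edge incident to a vertex with second coordinate $k-1$ wraps correctly to $(q+1 \bmod m,\,0)$, including the case $q = m-1$ where $(q+1)k = n \equiv 0 \pmod n$. Once the edge classes are matched, the conclusion is immediate and no contractions are needed.
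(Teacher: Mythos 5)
Your proof is correct and is exactly the argument the paper intends: the observation is justified only by Figure 3.2, whose caption says $C_8\,\square\,P_3 \preceq \operatorname{Circ}[24,\{1,3\}]$ is obtained ``by applying edge deletions\dots and relabeling the vertices,'' i.e.\ by exhibiting $G$ as a spanning subgraph of $H$. Your write-up simply makes that relabeling ($i \leftrightarrow (q,r)$ with $i = qk+r$) and the identification of the deleted ``wrap'' edges explicit and general, which is a welcome precision but not a different route.
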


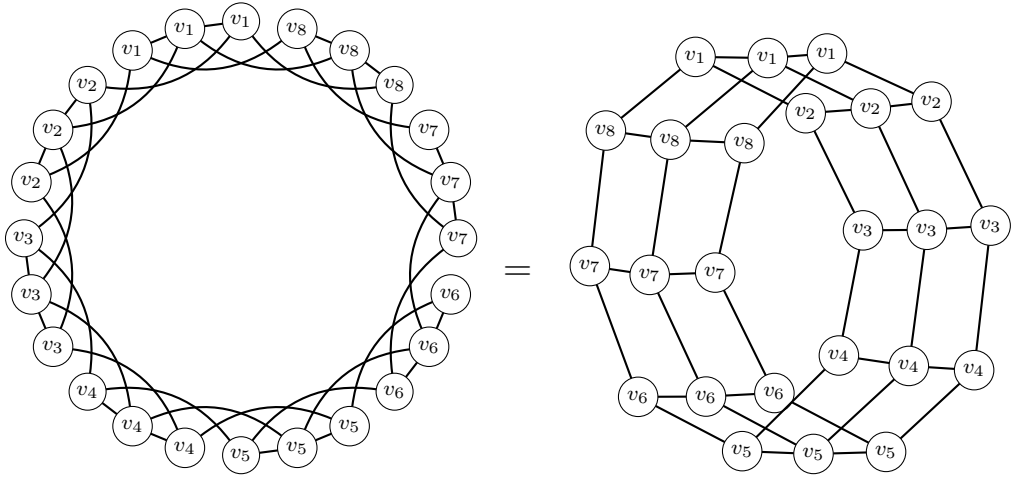
\begin{figure}[ht]
\centering
\begin{tikzpicture}[scale=0.52,font=\scriptsize]
\GraphInit[vstyle=Normal]
%
\tikzset{VertexStyle/.style = {shape = circle, draw=black,
minimum size=14pt, inner sep=0.5pt,color=black}}
\Vertex[L=\hbox{$v_{1}$},x=5.55cm,y=11.1cm]{v0}
\Vertex[L=\hbox{$v_{2}$},x=1.6256cm,y=9.4744cm]{v3}
\Vertex[L=\hbox{$v_{3}$},x=0.0cm,y=5.55cm]{v6}
\Vertex[L=\hbox{$v_{4}$},x=1.6256cm,y=1.6256cm]{v9}
\Vertex[L=\hbox{$v_{5}$},x=5.55cm,y=0.0cm]{v12}
\Vertex[L=\hbox{$v_{6}$},x=9.4744cm,y=1.6256cm]{v15}
\Vertex[L=\hbox{$v_{7}$},x=11.1cm,y=5.55cm]{v18}
\Vertex[L=\hbox{$v_{8}$},x=9.4744cm,y=9.4744cm]{v21}
\tikzset{VertexStyle/.style = {shape = circle, draw=black,
minimum size=0.1pt, inner sep=2pt,color=black}}
\Vertex[L=\hbox{$v_{1}$},x=4.1136cm,y=10.9109cm]{v1}
\Vertex[L=\hbox{$v_{2}$},x=0.7436cm,y=8.325cm]{v4}
\Vertex[L=\hbox{$v_{3}$},x=0.1891cm,y=4.1136cm]{v7}
\Vertex[L=\hbox{$v_{4}$},x=2.775cm,y=0.7436cm]{v10}
\Vertex[L=\hbox{$v_{5}$},x=6.9864cm,y=0.1891cm]{v13}
\Vertex[L=\hbox{$v_{6}$},x=10.3564cm,y=2.775cm]{v16}
\Vertex[L=\hbox{$v_{7}$},x=10.9109cm,y=6.9864cm]{v19}
\Vertex[L=\hbox{$v_{8}$},x=8.325cm,y=10.3564cm]{v22}
\tikzset{VertexStyle/.style = {shape = circle, draw=black,
minimum size=0.1pt, inner sep=2pt,color=black}}
\Vertex[L=\hbox{$v_{1}$},x=2.775cm,y=10.3564cm]{v2}
\Vertex[L=\hbox{$v_{2}$},x=0.1891cm,y=6.9864cm]{v5}
\Vertex[L=\hbox{$v_{3}$},x=0.7436cm,y=2.775cm]{v8}
\Vertex[L=\hbox{$v_{4}$},x=4.1136cm,y=0.1891cm]{v11}
\Vertex[L=\hbox{$v_{5}$},x=8.325cm,y=0.7436cm]{v14}
\Vertex[L=\hbox{$v_{6}$},x=10.9109cm,y=4.1136cm]{v17}
\Vertex[L=\hbox{$v_{7}$},x=10.3564cm,y=8.325cm]{v20}
\Vertex[L=\hbox{$v_{8}$},x=6.9864cm,y=10.9109cm]{v23}
%
\Edge[](v0)(v1)
\Edge[](v1)(v2)
\Edge[](v3)(v4)
\Edge[](v4)(v5)
\Edge[](v6)(v7)
\Edge[](v7)(v8)
\Edge[](v9)(v10)
\Edge[](v10)(v11)
\Edge[](v12)(v13)
\Edge[](v13)(v14)
\Edge[](v15)(v16)
\Edge[](v16)(v17)
\Edge[](v18)(v19)
\Edge[](v19)(v20)
\Edge[](v21)(v22)
\Edge[](v22)(v23)
\tikzset{EdgeStyle/.style = {bend left, color=black}}
\Edge[](v0)(v3)
\Edge[](v3)(v6)
\Edge[](v6)(v9)
\Edge[](v9)(v12)
\Edge[](v12)(v15)
\Edge[](v15)(v18)
\Edge[](v18)(v21)
\Edge[](v21)(v0)
\tikzset{EdgeStyle/.style = {bend left, color=black}}
\Edge[](v1)(v4)
\Edge[](v4)(v7)
\Edge[](v7)(v10)
\Edge[](v10)(v13)
\Edge[](v13)(v16)
\Edge[](v16)(v19)
\Edge[](v19)(v22)
\Edge[](v22)(v1)
\tikzset{EdgeStyle/.style = {bend left, color=black}}
\Edge[](v2)(v5)
\Edge[](v5)(v8)
\Edge[](v8)(v11)
\Edge[](v11)(v14)
\Edge[](v14)(v17)
\Edge[](v20)(v23)
\Edge[](v23)(v2)
\end{tikzpicture}
\begin{tikzpicture}[scale=0.48,font=\scriptsize]
\GraphInit[vstyle=Normal]
\tikzset{VertexStyle/.style = {shape = circle, font=\large}}
\Vertex[L=\hbox{$=$},x=-2cm,y=5cm]{v29}
\tikzset{VertexStyle/.style = {shape = circle, draw=black,
minimum size=0.1pt, inner sep=2pt,color=black}}
\Vertex[L=\hbox{$v_1$},x=6.5689cm,y=11.1cm]{v6}
\Vertex[L=\hbox{$v_2$},x=9.4634cm,y=9.7654cm]{v9}
\Vertex[L=\hbox{$v_3$},x=11.1cm,y=6.3287cm]{v12}
\Vertex[L=\hbox{$v_4$},x=10.6428cm,y=2.3224cm]{v15}
\Vertex[L=\hbox{$v_5$},x=8.2041cm,y=0.0566cm]{v18}
\Vertex[L=\hbox{$v_6$},x=5.1111cm,y=1.689cm]{v21}
\Vertex[L=\hbox{$v_7$},x=3.472cm,y=5.0271cm]{v0}
\Vertex[L=\hbox{$v_8$},x=4.2796cm,y=8.6147cm]{v3}
\tikzset{VertexStyle/.style = {shape = circle, draw=black,
minimum size=0.1pt, inner sep=2pt,color=black}}
\Vertex[L=\hbox{$v_1$},x=4.929cm,y=10.9745cm]{v7}
\Vertex[L=\hbox{$v_2$},x=7.7808cm,y=9.5752cm]{v10}
\Vertex[L=\hbox{$v_3$},x=9.3342cm,y=6.2131cm]{v13}
\Vertex[L=\hbox{$v_4$},x=8.8344cm,y=2.4397cm]{v16}
\Vertex[L=\hbox{$v_5$},x=6.194cm,y=0.0cm]{v19}
\Vertex[L=\hbox{$v_6$},x=3.2166cm,y=1.5907cm]{v22}
\Vertex[L=\hbox{$v_7$},x=1.6574cm,y=4.9626cm]{v1}
\Vertex[L=\hbox{$v_8$},x=2.2328cm,y=8.7113cm]{v4}
\tikzset{VertexStyle/.style = {shape = circle, draw=black,
minimum size=0.1pt, inner sep=2pt,color=black}}
\Vertex[L=\hbox{$v_1$},x=2.927cm,y=11.0059cm]{v8}
\Vertex[L=\hbox{$v_2$},x=5.9784cm,y=9.4441cm]{v11}
\Vertex[L=\hbox{$v_3$},x=7.5662cm,y=6.1977cm]{v14}
\Vertex[L=\hbox{$v_4$},x=6.8974cm,y=2.7286cm]{v17}
\Vertex[L=\hbox{$v_5$},x=4.2191cm,y=0.0774cm]{v20}
\Vertex[L=\hbox{$v_6$},x=1.3381cm,y=1.5834cm]{v23}
\Vertex[L=\hbox{$v_7$},x=0.0cm,y=5.2073cm]{v2}
\Vertex[L=\hbox{$v_8$},x=0.4447cm,y=8.9643cm]{v5}
\Edge[](v0)(v1)
\Edge[](v1)(v2)
\Edge[](v3)(v4)
\Edge[](v4)(v5)
\Edge[](v6)(v7)
\Edge[](v7)(v8)
\Edge[](v9)(v10)
\Edge[](v10)(v11)
\Edge[](v12)(v13)
\Edge[](v13)(v14)
\Edge[](v15)(v16)
\Edge[](v16)(v17)
\Edge[](v18)(v19)
\Edge[](v19)(v20)
\Edge[](v21)(v22)
\Edge[](v22)(v23)
\tikzset{EdgeStyle/.style = {color=black}}
\Edge[](v6)(v9)
\Edge[](v9)(v12)
\Edge[](v12)(v15)
\Edge[](v15)(v18)
\Edge[](v18)(v21)
\Edge[](v0)(v21)
\Edge[](v0)(v3)
\Edge[](v3)(v6)
\tikzset{EdgeStyle/.style = {color=black}}
\Edge[](v7)(v10)
\Edge[](v10)(v13)
\Edge[](v13)(v16)
\Edge[](v16)(v19)
\Edge[](v19)(v22)
\Edge[](v1)(v22)
\Edge[](v1)(v4)
\Edge[](v4)(v7)
\tikzset{EdgeStyle/.style = {color=black}}
\Edge[](v8)(v11)
\Edge[](v11)(v14)
\Edge[](v14)(v17)
\Edge[](v17)(v20)
\Edge[](v20)(v23)
\Edge[](v2)(v23)
\Edge[](v2)(v5)
\Edge[](v5)(v8)
\end{tikzpicture}
\caption{ By applying edge deletions to $  \operatorname{Circ}[24,\{1,3\}] $
and relabeling the vertices, it is clear that $ C_8 \square P_3 \preceq \operatorname{Circ}[24,\{1,3\}] $.}
\label{fig:cycle_8_path_3}
\end{figure}

The next result may also be true for $ n < 24 $ but our proof needs $ n $ to be
big enough to use results from $ \operatorname{Z}( C_8 \square P_3)=
\operatorname{Z}(\operatorname{Circ}[24,\{1,3\}])=6  $.

\begin{thm}
\label{thm:circulant_multiple_of_3}
Let $ n \geq 24 $ be a multiple of $ 3 $ and let $ G =
\operatorname{Circ}[n,\{1,3\}]
$. Then $ \xi(G) = \operatorname{M}(G) = \operatorname{Z}(G) = 6$.
\end{thm}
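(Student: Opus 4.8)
The plan is to exhibit $H = C_8 \square P_3$ as a zero forcing minor of $G = \operatorname{Circ}[n,\{1,3\}]$ and then feed this into Theorem~\ref{thm:m_equal_z_zeroforcing_minor_family}. The first step is the upper bound on the zero forcing number: the connection set $\{1,3\}$ has largest element $m = 3$, so Proposition~\ref{prop:zero_forcing_number_non_consecutive_circulants} immediately gives $\operatorname{Z}(G) \leq 2m = 6$.

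The second step is to verify the minor relation. Since $n$ is a multiple of $3$, Observation~\ref{obs:circulant_minor} (applied with $k = 3$) gives $C_{n/3} \square P_3 \preceq \operatorname{Circ}[n,\{1,3\}] = G$. Because $n \geq 24$ we have $n/3 \geq 8$, so Observation~\ref{obs:cartesian_product_minor} gives $C_8 \square P_3 \preceq C_{n/3} \square P_3$; transitivity of $\preceq$ then yields $H = C_8 \square P_3 \preceq G$. By Theorem~\ref{thm:M_equal_cartesian_product}, $\operatorname{Z}(H) = \min\{8,6\} = 6$, and combining this with the first step, $\operatorname{Z}(G) \leq 6 = \operatorname{Z}(H)$, so $H$ is a zero forcing minor of $G$.

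For the last step, Example~\ref{ex:cycle_8_path_3} records that $A(C_8 \square P_3)$ has the Strong Arnold Property with nullity $6$, hence $\xi(H) = 6 = \operatorname{Z}(H)$ (this is also the content of Corollary~\ref{cor:zi_equal_Z_cartesian_product_divisor}). Thus $H$ meets the hypothesis of Theorem~\ref{thm:m_equal_z_zeroforcing_minor_family}, and that theorem delivers $\xi(G) = \operatorname{M}(G) = \operatorname{Z}(G) = \operatorname{Z}(H) = 6$, which is the claim.

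Given the earlier results, the argument is essentially bookkeeping; the only real constraint is matching hypotheses, and the binding one is $n/3 \geq 8$, which is precisely why $n \geq 24$ is imposed. To handle smaller multiples of $3$ one could not route through $C_8 \square P_3$, and would instead need an SAP-certified optimal matrix (or a direct lower bound on $\xi$) for the small circulant itself, or a suitable smaller zero forcing minor $H'$ with $\xi(H') = \operatorname{Z}(H')$ — exactly the step that the present approach deliberately avoids.
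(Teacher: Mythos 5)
Your proposal is correct and follows essentially the same route as the paper's own proof: both establish $\operatorname{Z}(G)\leq 6$ via Proposition \ref{prop:zero_forcing_number_non_consecutive_circulants}, obtain $C_8\,\square\,P_3 \preceq C_{n/3}\,\square\,P_3 \preceq G$ from Observations \ref{obs:cartesian_product_minor} and \ref{obs:circulant_minor}, and invoke Example \ref{ex:cycle_8_path_3} together with Theorem \ref{thm:m_equal_z_zeroforcing_minor_family}. Your version merely spells out the bookkeeping (e.g.\ $\operatorname{Z}(C_8\,\square\,P_3)=\min\{8,6\}=6$ and why $n\geq 24$ is needed) a bit more explicitly than the paper does.
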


\begin{proof}
In \hyperref[ex:cycle_8_path_3]{Example
\ref{ex:cycle_8_path_3}}, we showed that $ 6 = \xi(C_8 \square P_3) $. By
\hyperref[obs:cartesian_product_minor]{Observations \ref{obs:cartesian_product_minor}} and
\hyperref[obs:circulant_minor]{\ref{obs:circulant_minor}} $ C_8
\square P_3 \preceq C_{{n}/{3}} \square P_3 \preceq G $, and  $
\operatorname{Z}(G) \leq 6 $ by
\hyperref[prop:zero_forcing_number_non_consecutive_circulants]{Proposition
\ref{prop:zero_forcing_number_non_consecutive_circulants}}. Therefore, $ C_8
\square P_3 $ is a zero forcing minor of $ G $, and $ \xi(G) =
\operatorname{M}(G) = \operatorname{Z}(G) = 6 $ by
\hyperref[thm:m_equal_z_zeroforcing_minor_family]{Theorem
\ref{thm:m_equal_z_zeroforcing_minor_family}}. 
\end{proof}

\begin{rem}
\label{rem:M_equal_Z_mobius_ladder}
For every positive integer $ t $, $ \operatorname{Circ}[2t,\{1,t\}]  $ is the
Moebius ladder graph. The edges $ \{i,i+t\} $ of $ \operatorname{Circ}[2t,\{1,t\}]
$ are the rungs in the Moebius ladder. It was shown in \cite[Proposition
3.9]{MR2388646} that all Moebius ladder graphs have both their maximum nullity and zero forcing number equal to $ 4 $.
\end{rem}

\section{An application of vertex connectivity}
\label{sec:vertex_connectivity}
In this section, we use the known results for the vertex connectivity of a
graph to show that the maximum nullity and zero forcing number for some
circulant graphs are the same. 
\bigskip

The \textit{vertex connectivity}, denoted by $ \kappa(G) $, of a graph is the
smallest number of vertices needed to be deleted to disconnect a noncomplete
graph and $ \kappa(K_n) = n-1 $. In 2007, building on the work of
Lov\'{a}sz, Saks, Schrijver \cite{MR1770359}, \cite{MR986889},  Hein van der Holst \cite{MR2419952}
showed that the vertex connectivity of a graph is a lower bound for the maximum
nullity of a graph. Although not published, it is worth noting that in a AIM
workshop the minimum degree and vertex connectivity of a graph were used to
show that the maximum nullity is equal to the zero forcing number for certain
circulant graphs. (In
1962, Frank Harary showed that the vertex connectivity of those graphs is the
same as the minimum degree and they are now called \textit{Harary graphs}.)

\begin{thm}{\rm\cite[Theorem 4]{MR2419952}}
\label{thm:xi_bounded_by_kappa_and_max_nullity}
Let $ G $ be a graph. Then $ \kappa(G) \leq \xi(G) $.
\end{thm}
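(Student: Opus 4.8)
The plan is to show directly that a $k$-connected graph $G$ admits a matrix $A\in\mathcal{S}(G)$ with the Strong Arnold Property and $\operatorname{null}(A)\ge k$, where $k=\kappa(G)$; by definition of $\xi$ this gives $\xi(G)\ge k$. The complete-graph case is immediate: for $G=K_n$ a rank-one matrix $\bv\bv^{\top}$ with every coordinate of $\bv$ nonzero lies in $\mathcal{S}(K_n)$, has nullity $n-1=\kappa(K_n)$, and has the SAP for a trivial reason, namely that $A\circ X=0$ together with $A$ having no zero entry already forces $X=0$. So assume $G$ is not complete, fix a minimum vertex cut $S$ with $|S|=k$, and let $H_1,\dots,H_m$ (with $m\ge 2$) be the components of $G-S$. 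A standard minimality argument shows every vertex of $S$ has a neighbour in every $H_i$: otherwise $S$ with that vertex removed would already separate two of the $H_i$, contradicting $\kappa(G)=k$.

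Ordering the vertices of $G$ as $S,\,V(H_1),\dots,V(H_m)$, every $A\in\mathcal{S}(G)$ is block-partitioned with a symmetric block $A_S$ in the pattern of $G[S]$ and free diagonal, diagonal blocks $A_i\in\mathcal{S}(H_i)$, rectangular blocks $B_i$ carrying the $S$--$H_i$ adjacency, and zero blocks between distinct $H_i$. If each $A_i$ is chosen invertible (possible, since a generic diagonal makes a symmetric matrix nonsingular) and if we prescribe
\[
A_S=\sum_{i=1}^{m} B_i A_i^{-1} B_i^{\top},
\]
then a Schur-complement computation gives $\ker A=\{(x;-A_1^{-1}B_1^{\top}x;\dots;-A_m^{-1}B_m^{\top}x):x\in\R^{S}\}$, which has dimension exactly $k$. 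The remaining work in this step is to realize the displayed identity with $A_S$ in the prescribed pattern: since the diagonal of $A_S$ is free, the only constraints are that $\big(\sum_i B_iA_i^{-1}B_i^{\top}\big)_{s s'}$ vanish for each non-edge $s s'$ of $S$ and be nonzero for each edge. I would treat the entries of the $B_i$ as parameters: since each $s\in S$ has a neighbour in $H_1$, the entry $\big(B_1A_1^{-1}B_1^{\top}\big)_{s s'}$ is not an identically zero function of the free entries of $B_1$, and a dimension count — at least $k$ free entries in each $B_i$ against at most $\binom{k}{2}$ equations, with further slack from the other components — together with a Zariski-density argument shows the system has a solution with all the required entries nonzero; genuine care is needed in degenerate configurations such as $S$ independent or thin bipartite structure between $S$ and the $H_i$, where one exploits cancellation among the summands $B_iA_i^{-1}B_i^{\top}$.

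For the SAP, observe that a symmetric $X$ with $AX=0$ has column space inside the $k$-dimensional space $\ker A$, hence $X=\Phi Z\Phi^{\top}$ for a unique symmetric $k\times k$ matrix $Z$, where $\Phi$ is the $n\times k$ matrix whose columns are a basis of $\ker A$ read off from the description above. The conditions $I\circ X=0$ and $A\circ X=0$ then say exactly that every diagonal entry and every $E(G)$-entry of $\Phi Z\Phi^{\top}$ vanishes, and the SAP holds precisely when this linear system forces $Z=0$, i.e.\ when the linear map sending a symmetric $Z$ to the tuple of the diagonal entries and the $E(G)$-entries of $\Phi Z\Phi^{\top}$ is injective. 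There are far more equations ($|V(G)|+|E(G)|$) than unknowns ($\binom{k+1}{2}$), so this injectivity is the expected behaviour; I would add it as one further Zariski-open condition on the parameters $(A_i,B_i)$ and establish it by exhibiting a single explicit choice — for instance a convenient weighting of the adjacency structure that keeps each $A_i$ diagonal and uses that $S$ reaches into every $H_i$ — for which enough diagonal and edge entries of $\Phi Z\Phi^{\top}$ can be read off to recover all of $Z$.

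The main obstacle is that the last two steps pull against each other: prescribing $A_S$ cuts the parameter space down, and the SAP-injectivity is a delicate condition on the resulting kernel basis $\Phi$, so one must be sure the two generic conditions are simultaneously satisfiable. The way I would organize this is to work on the irreducible variety $\mathcal{V}=\{(A_1,\dots,A_m)\ \text{invertible}\}\times\{(B_1,\dots,B_m)\ \text{with the prescribed supports}\}$, on which the equations defining $A_S$ are parametrized linear relations; the ``wrong pattern'' loci and the ``some nonzero $Z$ survives'' locus are then proper Zariski-closed subsets of $\mathcal{V}$, and it suffices to produce one point of $\mathcal{V}$ outside all of them. Producing that explicit witness — a matrix on $G$ whose kernel is understood precisely enough to certify the SAP by hand — is where the real difficulty lies. (An alternative is to induct on $\kappa(G)$, removing one vertex $v$ at a time using $\kappa(G-v)\ge\kappa(G)-1$ and bordering a witness matrix for $G-v$ with a new row supported on $N_G(v)$; but arranging the nullity to increase by exactly one while keeping the right support and the SAP appears to be at least as delicate.)
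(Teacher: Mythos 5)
This theorem is quoted by the paper from van der Holst \cite{MR2419952} (itself resting on the Lov\'asz--Saks--Schrijver theory of general-position orthogonal representations), so there is no in-paper proof to compare against; the question is whether your argument stands on its own, and as written it does not. The skeleton is sound: the complete-graph case, the fact that every vertex of a minimum cut $S$ has a neighbour in every component of $G-S$, the Schur-complement computation showing that $A_S=\sum_i B_iA_i^{-1}B_i^{\top}$ forces $\operatorname{null}(A)=k$, and the reduction of the SAP to injectivity of $Z\mapsto(\text{diagonal and edge entries of }\Phi Z\Phi^{\top})$ are all correct. But the two steps you defer are exactly where the content of the theorem lives, and neither is a routine genericity argument. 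The condition that $\bigl(\sum_i B_iA_i^{-1}B_i^{\top}\bigr)_{ss'}=0$ for every non-edge $ss'$ of $G[S]$ is a system of \emph{closed} conditions, so Zariski density gives you nothing; you must first prove the solution variety is nonempty and positive-dimensional inside the locus where all prescribed $B_i$-entries are nonzero, and then that the open conditions (nonzero entries on edges of $G[S]$, SAP) hold somewhere on it. Your dimension count does not settle this: in the extremal configuration where $G[S]$ is edgeless, $m=2$, and each $s\in S$ has exactly one neighbour in each $H_i$, you are asking $\binom{k}{2}$ specific bilinear cancellations $\sum_i b_s^{(i)}b_{s'}^{(i)}(A_i^{-1})_{n_i(s),n_i(s')}=0$ to hold simultaneously with all $b$'s nonzero, and nothing in the sketch shows this is achievable, let alone achievable compatibly with the SAP.

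The SAP step has the same problem one level up: ``more equations than unknowns, so injectivity is expected'' is a heuristic, and the whole difficulty of the Lov\'asz--Saks--Schrijver result is that the needed transversality must be \emph{proved}, not expected (indeed their original argument contained an error on precisely this point and required a published correction). You acknowledge that an explicit witness is needed and do not produce one. The standard proof sidesteps your two coupled existence problems entirely: for a $k$-connected $G$ on $n$ vertices one constructs a general-position orthogonal representation $u_1,\dots,u_n\in\R^{n-k}$ (any $n-k$ of the vectors linearly independent, $u_i\perp u_j$ for non-adjacent $i\ne j$), and the Gram matrix then automatically has nullity $k$, lies in $\mathcal{S}(G)$ after a genericity perturbation, and inherits the SAP from the general-position property. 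If you want a self-contained proof, that is the route to take; as it stands your proposal is a program, not a proof, with the gap concentrated in the unverified nonemptiness of the pattern-constraint variety and the unverified SAP witness.
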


\begin{cor}
\label{cor:parameter_inequality}
Let $ G $ be a graph. Then $ \kappa(G) \leq \xi(G) \leq \operatorname{M}(G)
\leq \operatorname{Z}(G) $.
\end{cor}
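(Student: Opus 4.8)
The plan is to assemble the displayed chain of inequalities from three facts, each of which has already been recorded in the excerpt, so that essentially no new argument is required. First I would invoke Theorem~\ref{thm:xi_bounded_by_kappa_and_max_nullity} to obtain $\kappa(G) \le \xi(G)$.

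Next, for the middle inequality $\xi(G) \le \operatorname{M}(G)$, I would appeal directly to the definitions. By definition $\xi(G) = \max\{\operatorname{null}(A) : A \in \mathcal{S}(G) \text{ and } A \text{ has the SAP}\}$ is a maximum of $\operatorname{null}(A)$ taken over a subset of $\mathcal{S}(G)$, whereas $\operatorname{M}(G) = \max\{\operatorname{null}(A) : A \in \mathcal{S}(G)\}$ is the maximum of the same quantity over all of $\mathcal{S}(G)$; hence $\xi(G) \le \operatorname{M}(G)$. This is precisely the observation already noted immediately after the definition of $\xi$. Finally, applying Proposition~\ref{prop:M_is_less_than_Z_over_any_field} with the field $\mathcal{F} = \mathbb{R}$ gives $\operatorname{M}(G) = \operatorname{M}(\mathbb{R},G) \le \operatorname{Z}(G)$, using the convention fixed in the Introduction that an unannotated $\operatorname{M}(G)$ denotes maximum nullity over $\mathbb{R}$. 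Concatenating the three inequalities yields $\kappa(G) \le \xi(G) \le \operatorname{M}(G) \le \operatorname{Z}(G)$, as claimed.

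There is no genuine obstacle here: the corollary is a bookkeeping statement that collects previously established bounds into a single display for convenient reference in the later sections. The only point meriting a moment's care is confirming that the field specialization in the last step is legitimate, which it is, since Proposition~\ref{prop:M_is_less_than_Z_over_any_field} holds over every field and in particular over $\mathbb{R}$.
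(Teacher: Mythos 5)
Your proposal is correct and matches the paper's (implicit) reasoning exactly: the paper presents this corollary without proof precisely because it is the concatenation of Theorem~\ref{thm:xi_bounded_by_kappa_and_max_nullity}, the remark ``Clearly $\xi(G) \leq \operatorname{M}(G) \leq \operatorname{Z}(G)$'' made immediately after the definition of $\xi$, and Proposition~\ref{prop:M_is_less_than_Z_over_any_field} over $\mathbb{R}$. Your added care about the field specialization is a reasonable bit of diligence but introduces nothing beyond what the paper takes for granted.
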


\begin{obs}
\label{obs:min_degree_circulant}
Let $ G $ be a circulant graph $ \operatorname{Circ}[n,S] $ such that $ S $
does not contain $ \frac{n}{2} $. Then $ \delta(G) = 2|S|$.
\end{obs}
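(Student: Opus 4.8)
The plan is to prove the stronger statement that $\operatorname{Circ}[n,S]$ is $2|S|$-regular, from which $\delta(G)=2|S|$ is immediate. Fix a vertex $i\in\{0,1,\dots,n-1\}$. By the definition of the edge set of $\operatorname{Circ}[n,S]$, the vertices adjacent to $i$ are exactly those of the form $i+s$ or $i-s$ (arithmetic modulo $n$) as $s$ ranges over $S$. Thus $N_G(i)$ is contained in $\{\,i+s\bmod n:s\in S\,\}\cup\{\,i-s\bmod n:s\in S\,\}$, a set of size at most $2|S|$, and it remains to check that these $2|S|$ residues are pairwise distinct modulo $n$.

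Recall that by the convention adopted in the paper $S\subseteq\{1,2,\dots,\lfloor n/2\rfloor\}$, and the hypothesis $\tfrac{n}{2}\notin S$ then forces $1\le s<\tfrac{n}{2}$ for every $s\in S$. Next I would rule out the possible coincidences among the listed residues. If $i+s\equiv i+s'\pmod n$ or $i-s\equiv i-s'\pmod n$ for $s,s'\in S$, then $s\equiv s'\pmod n$, and since $1\le s,s'<n$ this gives $s=s'$. The only remaining possibility is $i+s\equiv i-s'\pmod n$ for some $s,s'\in S$, which would force $n\mid s+s'$; but $2\le s+s'<n$, a contradiction. (In particular, taking $s=s'$ rules out $i+s\equiv i-s$, since $0<2s<n$.) Hence the $2|S|$ listed residues are distinct, so $\deg_G(i)=2|S|$.

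Since $i$ was arbitrary, every vertex of $G$ has degree $2|S|$; $G$ is therefore $2|S|$-regular and $\delta(G)=2|S|$. The only place the hypothesis is used is in excluding $\tfrac{n}{2}$ from $S$, which is precisely what guarantees $s+s'\not\equiv 0\pmod n$ (and hence $2s\not\equiv 0\pmod n$) for all $s,s'\in S$, so that no two of the $2|S|$ candidate neighbors of a vertex collapse. Beyond bookkeeping these congruences there is no real obstacle; this is why the result is stated as an observation rather than a theorem.
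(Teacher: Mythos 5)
Your proof is correct; the paper states this as an unproved observation, and your argument is exactly the natural verification it leaves implicit. The key point you correctly isolate is that the paper's convention $S\subseteq\{1,\dots,\lfloor n/2\rfloor\}$ together with $\tfrac{n}{2}\notin S$ forces $s+s'<n$ for all $s,s'\in S$, so the $2|S|$ candidate neighbors $i\pm s$ never coincide and the graph is $2|S|$-regular.
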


The circulant graph $\operatorname{Circ}[n, \{1,2,\dots,t\}]$ is called a {\em
consecutive circulant}.  It is known that a consecutive circulant is a Harary
graph (see {\rm\cite[Example 4.1.4]{West11}}), and it is shown in
{\rm\cite[Theorem 4.1.5]{West11}} that the vertex connectivity and the minimum
degree of a Harary graph are equal.

\begin{thm}{\rm\cite[Corollary 2.2]{MRcirc}}
\label{thm:M_equal_Z_consecutive_circulants}
Let $ 2t + 1 \leq n $ and let $ G = \operatorname{Circ}[n, \{1,2,\dots, t\}] $. Then 
\[
\kappa(G) = \delta(G) = \xi(G) = \operatorname{M}(G) = \operatorname{Z}(G) = 2t.
\]
\end{thm}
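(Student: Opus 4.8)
The plan is to prove everything by assembling a single chain of inequalities that pinches all five quantities together:
\[
2t = \delta(G) = \kappa(G) \leq \xi(G) \leq \operatorname{M}(G) \leq \operatorname{Z}(G) \leq 2t.
\]
Every link is furnished by a result already recorded in the excerpt, so the real work is just checking that the hypotheses of each cited result are met under the standing assumption $2t+1 \leq n$.

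First I would pin down the two endpoints. For the lower endpoint: since $2t+1 \leq n$ we have $t < \tfrac{n}{2}$, so the connection set $S = \{1,2,\dots,t\}$ does not contain $\tfrac{n}{2}$, and \hyperref[obs:min_degree_circulant]{Observation \ref{obs:min_degree_circulant}} gives $\delta(G) = 2|S| = 2t$. Then I would invoke the facts quoted just before the theorem: a consecutive circulant on $n$ vertices is a Harary graph (the hypothesis $n > 2t$ is exactly what makes this identification legitimate), and the vertex connectivity of a Harary graph equals its minimum degree, so $\kappa(G) = \delta(G) = 2t$. For the upper endpoint: apply \hyperref[prop:zero_forcing_number_non_consecutive_circulants]{Proposition \ref{prop:zero_forcing_number_non_consecutive_circulants}} with $m = \max\{i : i \in S\} = t$ to obtain $\operatorname{Z}(G) \leq 2m = 2t$. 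The three middle inequalities $\kappa(G) \leq \xi(G) \leq \operatorname{M}(G) \leq \operatorname{Z}(G)$ are exactly \hyperref[cor:parameter_inequality]{Corollary \ref{cor:parameter_inequality}}. Concatenating these, the displayed chain has $2t$ at both ends, so every term in it equals $2t$; in particular $\xi(G) = \operatorname{M}(G) = \operatorname{Z}(G) = 2t$, which together with $\kappa(G) = \delta(G) = 2t$ is the claim.

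There is no deep idea here; the only thing to be careful about is the edge-case bookkeeping, which is also where the hypothesis $2t+1 \leq n$ earns its keep. It is used in two places: to keep $\tfrac{n}{2}$ out of $S$ so that the degree count $\delta(G) = 2|S|$ is valid, and to guarantee $G$ has more vertices than its connectivity so that it is a genuine Harary graph and the "$\kappa = \delta$" citation applies. I would also sanity-check the degenerate case $n = 2t+1$: then $n$ is odd, so $\tfrac{n}{2} \notin \mathbb{Z}$ and hence $\tfrac{n}{2} \notin S$ trivially, and $\operatorname{Circ}[2t+1,\{1,\dots,t\}]$ is in fact $K_{2t+1}$, for which $\kappa = 2t = n-1$ as the convention $\kappa(K_n) = n-1$ predicts. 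Once these checks are in place the argument is a two-line assembly of the cited inequalities.
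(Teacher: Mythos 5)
Your proof is correct and follows the same route as the paper: compute $\delta(G)=2t$ from Observation \ref{obs:min_degree_circulant}, get $\kappa(G)=\delta(G)$ from the Harary graph fact, chain $\kappa\leq\xi\leq\operatorname{M}\leq\operatorname{Z}$ via Corollary \ref{cor:parameter_inequality}, and cap with $\operatorname{Z}(G)\leq 2t$ from Proposition \ref{prop:zero_forcing_number_non_consecutive_circulants}. Your extra checks on the edge cases (that $\tfrac{n}{2}\notin S$ and the $n=2t+1$ degenerate case) are careful additions the paper leaves implicit.
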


\begin{proof}
By \hyperref[obs:min_degree_circulant]{Observation
\ref{obs:min_degree_circulant}}, $ \delta(G) = 2t $. Since $ G $ is a Harary graph, $ \kappa(G) = \delta(G) $. By
\hyperref[cor:parameter_inequality]{Corollary \ref{cor:parameter_inequality}},
we have the following inequalities $ \kappa(G) = \delta(G) \leq \xi(G) \leq
\operatorname{M}(G) \leq \operatorname{Z}(G) $. An upper bound for the zero
forcing number of $ G $ is $ 2t $, which is given by
\hyperref[prop:zero_forcing_number_non_consecutive_circulants]{Proposition
\ref{prop:zero_forcing_number_non_consecutive_circulants}}. Therefore, $
\kappa(G) = \delta(G) = \xi(G) = \operatorname{M}(G) = \operatorname{Z}(G) =
2t. $
\end{proof}

When $ n $ is odd and $ t = \lfloor \frac{n}{2} \rfloor $ the circulant
$\operatorname{Circ}[n,\{1,2,\dots,t\}] = K_n $. The equality of $ \kappa,
\delta, \xi, \text{and}, \operatorname{Z} $ shown for consecutive circulants in
\hyperref[thm:M_equal_Z_consecutive_circulants]{Theorem
\ref{thm:M_equal_Z_consecutive_circulants}} is not true for all circulant
graphs as shown in the next example.

\begin{ex}
Let $ G $ be the graph $ \operatorname{Circ}[8,\{1,3\}] = K_{4,4}$. By
considering $ G = K_{4,4}$, we see that $
\kappa(G) = \delta(G) = 4 $ and $ \operatorname{Z}(G) = 6 $, since $
\operatorname{Z}(K_{a,b}) = a+b-2 $. It was shown in
{\rm\cite[Corollary 2.8]{MR2181887}} that $ \xi(G) = \min\{4,4\} + 1 = 5$.
\end{ex}

For $ n = 2m + 1 $, if $ n $ is prime, $ \gcd(m-1,n)= \gcd(m,n) = 1 $. So $
\operatorname{Circ}[n,\{1,\dots, m-2,m \}] \cong K_n - C_n \cong
\operatorname{Circ}[n,[m-1]].$ However $ \operatorname{Circ}[22,
\{1,2,3,4,5,6,7,8,10\}] \ncong \operatorname{Circ}[22,
\{1,2,\\3,4,5,6,7,8,9\}]$. Thus the discussion below covers graphs that are not
consecutive circulants.

\begin{prop}
\label{prop:zero_forcing_consecutive_circulant_minus_one}
Let $ H = \operatorname{Circ}[n,[m] \setminus \{m-1\}] $ where $ n > 9 $ and $ m =
\lceil n/2 \rceil - 1 $. Then $ \operatorname{Z}(H) \leq 2(m-1). $
\end{prop}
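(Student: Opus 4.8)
The plan is to prove the bound by exhibiting an explicit zero forcing set $Z\subseteq V(H)$ with $|Z|=2(m-1)$ together with a chronological list of forces that colors the remaining vertices blue; the inequality $\operatorname{Z}(H)\le 2(m-1)$ is then immediate. Write $n=2m+1$ when $n$ is odd and $n=2m+2$ when $n$ is even (this is exactly what $m=\lceil n/2\rceil-1$ gives), so that the set $W:=V(H)\setminus Z$ that must be forced has size $3$ or $4$ respectively, and in both cases $|Z|=n-|W|=2(m-1)$.

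The idea behind the choice of $W$ is that, because the connection set $[m]\setminus\{m-1\}$ omits $m-1$, every vertex $v$ of $H$ is non-adjacent to $v\pm(m-1)$, and, when $n$ is even, also to its antipode $v+\tfrac n2$ (not in the connection set either). Hence each vertex has exactly two (odd $n$) or three (even $n$) non-neighbors besides itself, and, once reduced modulo $n$, these sit in a fixed arithmetic-progression pattern around $v$ with common difference $3$ in the odd case and $2$ in the even case. I would therefore take $W$ to be an arithmetic progression with that same common difference: for $n=2m+1$, set $W=\{m-2,\;m+1,\;m+4\}$; for $n=2m+2$, set $W=\{m-1,\;m+1,\;m+3,\;m+5\}$.

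The verification then runs in three steps (odd $n$) or four steps (even $n$). At the first step one already-blue vertex — vertex $2m$ when $n$ is odd, vertex $0$ when $n$ is even — has its entire set of non-neighbors equal to all but one element of $W$, so it forces that one element ($2m\to m+4$ in the odd case, $0\to m+5$ in the even case). Deleting the forced vertex leaves a shorter arithmetic progression of the same common difference, and the same mechanism produces an already-blue vertex (such as $2m$ or $2$) that forces a further element of $W$; iterating, $W$ shrinks to a single vertex, which any adjacent blue vertex then forces. Carrying this out uses only the hypotheses $m\ge 4$ for even $n>9$ and $m\ge 5$ for odd $n>9$, which guarantee that the small distances $2$ and $3$ lie in $\{1,\dots,m-2,m\}$ and that the named vertices are distinct and outside the current white set. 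The one spot needing care is the cyclic arithmetic: one must reduce all offsets modulo $n$ and, at each of the (few) forcing steps, confirm exactly which remaining white vertices the candidate forcer is adjacent to — in particular noting the small-case coincidence that when $n=12$ the vertex $2m$ equals the element $m+5$ of $W$, so it is available only after the first force. Since only a bounded number of steps is involved, treating the two parities separately and checking these boundary cases directly completes the proof; no idea beyond the right choice of $W$ seems to be required.
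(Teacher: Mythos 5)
Your proof is correct and follows the same basic strategy as the paper's: exhibit an explicit small white set $W$ whose complement is a zero forcing set, using the fact that omitting $m-1$ (and, when $n$ is even, $n/2$) from the connection set gives each vertex a short arithmetic progression of non-neighbors. The concrete white sets differ, though, and the difference is substantive in the even case. For odd $n=2m+1$ the paper deletes $\{m-2,\,m-1,\,m+2\}$ while you delete $\{m-2,\,m+1,\,m+4\}$; both are three-element sets and both yield $|Z|=n-3=2(m-1)$, so there the two arguments are interchangeable. For even $n=2m+2$, however, the paper deletes only the three vertices $\{2,\,m-1,\,m+3\}$, which gives $|Z|=n-3=2m-1$ and hence only establishes $\operatorname{Z}(H)\le 2m-1$, one more than the stated bound $2(m-1)=n-4$. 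Your four-element white set $\{m-1,\,m+1,\,m+3,\,m+5\}$, with forces $0\to m+5$, then $2\to m-1$, then $4\to m+1$, and finally any blue neighbor forcing $m+3$, does achieve $|Z|=n-4=2(m-1)$; the adjacencies used all reduce to $m-3$ or $m-4$ lying in the connection set, which holds under the hypothesis $n>9$, and the only boundary subtlety (at $n=12$, where a would-be forcer initially lies in $W$) is resolved exactly as you note, since that vertex is forced blue before it is needed. So your argument not only matches the paper's method in spirit but actually repairs an off-by-one in the paper's even case.
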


\begin{proof}
Observe first that $ 2(m-1) = \delta(H) \leq \operatorname{Z}(H) $. We will consider the case when $ n $ is odd first. Then $ n = 2m + 1   $. Since
$ m - 1 $ is not in the connection set, $ i $ is not adjacent to $ {i+(m-1)} $
or $ {i - (m-1)}$.
Note that $ i - (m-1) \equiv i + n - (m-1) \equiv i + 2m + 1 - (m
-1 ) \equiv i + m + 2 \bmod n $. It follows that $ 0 $ is not adjacent to $ {m-1} $
or $ {m+2} $, and $ 3 $ is not adjacent to $ {m+2} $ or $ {m+5} $.
Consider the set $ Z = V(H) \setminus \{{m-2}, {m-1}, {m+2} \}  $. Then $ 0 \to
{m-2} $ and $ {3} \to {m-1} $. After these two forces any vertex adjacent
to $ {m+2} $ can force $ {m+2} $, which shows that $ Z $ is a zero forcing set.
\medskip

When $ n $ is even, $ n = 2m + 2 $. Since
$ m - 1 $ and $ \frac{n}{2} $ are not in the connection set, $ i $ is not adjacent to $ {i+(m-1)} $,
$ {i - (m-1)} \equiv i+m+3$ or $ i + {n/2} \equiv i + (m + 1)$. It follows
that $ 0 $ is not adjacent to $ {m-1}$, $m+1$, or ${m+3} $, and $ 2 $ is not adjacent $ {m+1}$, ${m+3}$, or ${m+5} $. Consider the
set $ Z = V(H) \setminus \{2,{m-1},{m+3}\} $. Then $ 0 \to 2 $ and $ 2 \to
{m-1} $. Any vertex adjacent to $ {m+3} $ can force $ {m+3} $, which shows that
$ Z $ is a zero forcing set.
\end{proof}

\begin{thm}{\rm\cite[Theorem 1]{MR766498}}
\label{thm:circulant_vertex_connectivity}
Let $ G $ be a circulant graph $ \operatorname{Circ}[n,\{s_1,s_2,\dots,s_k\}]
$. There exists a proper divisor $ d $ of $ n $ such that the number of
distinct positive residues modulo $ d $ of $ s_1,s_2,
\dots,s_k,n-s_k,n-s_{k-1},\dots,n-s_1 $ is less than $
\min\{d-1,\frac{\delta(G)}{n}d\} $ if and only if $ \kappa(G) < \delta(G) $.
\end{thm}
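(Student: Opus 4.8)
Throughout I would write $T=\{s_1,\dots,s_k,n-s_1,\dots,n-s_k\}\subseteq\mathbb{Z}_n$ for the full symmetric connection set, so that $G=\operatorname{Circ}[n,S]$ is the Cayley graph of $\mathbb{Z}_n$ with connection set $T$ and $\delta(G)=|T|$; for a divisor $d\mid n$ let $r(d)$ denote the number of distinct \emph{nonzero} residues modulo $d$ occurring among the elements of $T$, so the hypothesis of the theorem is that some proper divisor $d$ satisfies $r(d)<\min\{d-1,\tfrac{\delta(G)}{n}d\}$. First I would clear away the degenerate cases: if $G$ is disconnected then $\gcd(s_1,\dots,s_k,n)=g>1$, one has $\kappa(G)=0<\delta(G)$, and $d=g$ witnesses the hypothesis since $r(g)=0$; and if $G=K_n$ then $\kappa(G)=\delta(G)$ and no divisor works (for $K_n$ one checks $r(d)=d-1$ for every $d$, so $r(d)<\min\{d-1,\ldots\}$ fails). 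Hence I may assume $G$ is connected and not complete.

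For the easy ("if") direction I would fix a proper divisor $d$ with $r(d)<\min\{d-1,\tfrac{\delta(G)}{n}d\}$ and take $H=d\mathbb{Z}_n$, the subgroup of multiples of $d$; its cosets $H,H+1,\dots,H+(d-1)$ are exactly the residue classes modulo $d$, each of size $n/d$. Since $H+t=H+t'$ iff $t\equiv t'\pmod d$, the closed neighbourhood $N[H]=\bigcup_{t\in T\cup\{0\}}(H+t)$ is a union of exactly $r(d)+1$ residue classes, and the boundary $\partial H=N[H]\setminus H$ is a union of exactly $r(d)$ residue classes. Thus $|\partial H|=r(d)\cdot\tfrac{n}{d}<\delta(G)$ by the second inequality, while the first inequality gives $r(d)+1<d$, so $V(G)\setminus N[H]\neq\varnothing$; hence $\partial H$ disconnects $G$ and $\kappa(G)\le|\partial H|<\delta(G)$.

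The substance of the theorem is the "only if" direction, whose key step is to show that a connected, noncomplete circulant with $\kappa(G)<\delta(G)$ has a \emph{minimum} separator that, after translation, is a union of residue classes modulo some proper divisor of $n$. I would derive this from the Mader--Watkins structure theory of vertex-transitive graphs: since $G$ is connected, vertex-transitive, not complete, and $\kappa(G)<\delta(G)$, its atoms (fragments of minimum cardinality) are pairwise disjoint, have cardinality at least $2$ (a size-$1$ atom would force $\kappa(G)=\delta(G)$), and form a system of imprimitivity for $\operatorname{Aut}(G)$, hence also for the regular translation action of $\mathbb{Z}_n\le\operatorname{Aut}(G)$; but the blocks of a regular cyclic action are precisely the cosets of subgroups, so an atom is a translate of a subgroup $H\le\mathbb{Z}_n$ with $2\le|H|\le n/2$. (If one wished to avoid quoting this, the alternative is additive-combinatorial: with $B=T\cup\{0\}$, a smallest separator yields a fragment $A$ with $N[A]=A+B$ and $|A+B|-|A|<\delta(G)=|B|-1$, and Kneser's theorem forces the stabilizer $H$ of $A+B$ to be a nontrivial subgroup, after which one must argue that $A$ is itself $H$-periodic.) Either way, set $d=n/|H|$, a proper divisor with $2\le d\le n/2$; translating so that $A=H$, the neighbourhood computation of the previous paragraph gives $\kappa(G)=|\partial A|=r(d)\cdot\tfrac{n}{d}$, whence $\kappa(G)<\delta(G)$ yields $r(d)<\tfrac{\delta(G)}{n}d$. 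Finally, because $A$ is a genuine fragment, $V(G)\setminus N[A]\neq\varnothing$; since $N[A]$ is a union of $r(d)+1$ of the $d$ residue classes, this forces $r(d)+1<d$, i.e.\ $r(d)<d-1$, and combining the two bounds produces the required divisor.

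The main obstacle is exactly that structural claim: that every smallest separator of a connected, noncomplete circulant is (a translate of) a union of cosets. Invoking Mader--Watkins makes it routine, but a from-scratch argument requires either redeveloping the submodularity/atom machinery for vertex-transitive graphs, or carefully extracting coset-periodicity of $A$ from Kneser's theorem (in particular ruling out $A\subsetneq A+H$), which is where the real work lies; once a coset-shaped minimum separator is available, the remaining steps are the bookkeeping carried out above.
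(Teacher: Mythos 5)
This theorem is imported verbatim from the reference \cite{MR766498} (Boesch--Tindell, Theorem 1); the paper gives no proof of it, so there is no in-paper argument to compare against. Judged on its own, your proposal is essentially the standard proof from the literature. The ``if'' direction is complete and correct: taking $H=d\mathbb{Z}_n$, the closed neighbourhood $N[H]$ is a union of $r(d)+1$ of the $d$ cosets, so $|N(H)|=r(d)\cdot\tfrac{n}{d}<\delta(G)$ by the second inequality while $r(d)<d-1$ guarantees $V(G)\setminus N[H]\neq\varnothing$, and the degenerate disconnected/complete cases are handled correctly. For the ``only if'' direction you correctly identify that everything hinges on showing a minimum fragment is a coset of a subgroup, and the route you choose --- Watkins/Mader: in a connected vertex-transitive noncomplete graph with $\kappa<\delta$ the atoms are pairwise disjoint blocks of imprimitivity of size at least $2$, and blocks of the regular $\mathbb{Z}_n$-action through $0$ are subgroups --- is exactly the machinery Boesch and Tindell use (via Watkins' ``atomic parts''). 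The bookkeeping that follows ($\kappa(G)=r(d)\cdot\tfrac{n}{d}$, $d=n/|H|$ a proper divisor since $2\le |H|<n/2$, and $r(d)+1<d$ from $V(G)\setminus N[A]\neq\varnothing$) is right. Your candour about the Kneser-theorem alternative is warranted: establishing $H$-periodicity of the fragment $A$ itself, not just of $A+B$, is a genuine additional step there, so the Mader--Watkins route is the one to rely on. In short: correct modulo the quoted structure theorem, and consistent with how the cited source actually proves the result.
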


\begin{thm}
\label{thm:circulant_delete_connection_set}
Let $ H = \operatorname{Circ}[n,[m] \setminus \{m-1\}] $ where $ n \geq 10 $ and $ m =
\lceil n/2 \rceil - 1 $. Then $\kappa(G) = \delta(G) = \xi(G) =
\operatorname{M}(G) = \operatorname{Z}(G) = 2(m - 1) $.
\end{thm}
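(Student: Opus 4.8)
The plan is to squeeze the five parameters between $\delta(H)$ and $2(m-1)$, and then to establish $\kappa(H)=\delta(H)$ via the Boesch--Tindell criterion of Theorem \ref{thm:circulant_vertex_connectivity}. Write $S=[m]\setminus\{m-1\}$, so that $|S|=m-1$; since $n\ge 10$ we have $m\ge 4$, and $\tfrac n2\notin S$ (vacuously when $n$ is odd, and because $\tfrac n2=m+1\notin[m]$ when $n$ is even). Hence $\delta(H)=2|S|=2(m-1)$ by Observation \ref{obs:min_degree_circulant}, $\operatorname{Z}(H)\le 2(m-1)$ by Proposition \ref{prop:zero_forcing_consecutive_circulant_minus_one}, and Corollary \ref{cor:parameter_inequality} gives
\[
\kappa(H)\le\xi(H)\le\operatorname{M}(H)\le\operatorname{Z}(H)\le 2(m-1)=\delta(H).
\]
As $\kappa(H)\le\delta(H)$ always holds, it remains only to prove $\kappa(H)=\delta(H)$, after which every quantity in the display equals $2(m-1)$.

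To analyse the connectivity I would first make the symmetric connection set $T\subseteq\ZZ_n$ explicit: a direct computation from $S=[m]\setminus\{m-1\}$ shows that $T=\ZZ_n\setminus\{0,\,m-1,\,n-(m-1)\}$ when $n$ is odd and $T=\ZZ_n\setminus\{0,\,m-1,\,\tfrac n2,\,n-(m-1)\}$ when $n$ is even, so in both cases $|T|=2(m-1)=\delta(H)$. The features I want to record are that $T$ omits at most three nonzero residues of $\ZZ_n$; that the consecutive omitted residues (namely $m-1$ and $\tfrac n2$, and $\tfrac n2$ and $n-(m-1)$, in the even case) differ by exactly $2$; and that $m-1$ and $n-(m-1)$ differ by $n-2(m-1)\in\{3,4\}$.

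Now let $d$ be a proper divisor of $n$. By Theorem \ref{thm:circulant_vertex_connectivity} it suffices to show that the number of distinct nonzero residues modulo $d$ represented among the jumps is not less than $\min\{\,d-1,\ \tfrac{\delta(H)}{n}d\,\}$; then no proper divisor witnesses $\kappa(H)<\delta(H)$, so $\kappa(H)=\delta(H)$. Each nonzero residue class of $\ZZ_d$ meets $\{1,\dots,n-1\}$ in exactly $n/d$ elements, and $T$ misses at most three elements altogether, so every nonzero class is represented whenever $n/d\ge 4$; as $d\mid n$, the only remaining proper divisors are $d=\tfrac n2$ and $d=\tfrac n3$ (when these are integers). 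If $d=\tfrac n3$ and $n$ is odd, only two elements are omitted and so no three-element class can be emptied. If $d=\tfrac n3$ and $n$ is even, then $d\ge 4$, hence $d\nmid 2$ and $m-1\not\equiv\tfrac n2\pmod d$, so the three omitted elements cannot all lie in one class, and again no nonzero class is emptied. If $d=\tfrac n2$, then $d\ge 5$, hence $d\nmid 2$ and $d\nmid 4$, so no two of $m-1,\tfrac n2,n-(m-1)$ are congruent modulo $d$, and each two-element nonzero class keeps an element of $T$. Thus in every case all $d-1$ nonzero residue classes are represented, so the count equals $d-1\ge\min\{\,d-1,\ \tfrac{\delta(H)}{n}d\,\}$; and for $d=1$ the count is $0$, which is not less than $\min\{0,\tfrac{\delta(H)}{n}\}=0$. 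Hence $\kappa(H)=\delta(H)$, and together with the displayed chain this gives $\kappa(H)=\xi(H)=\operatorname{M}(H)=\operatorname{Z}(H)=2(m-1)$.

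I expect the main obstacle to be precisely the two critical divisors $d=\tfrac n2$ and $d=\tfrac n3$: there each residue class is small, the relevant count is exactly $d-1$, and $\min\{d-1,\tfrac{\delta(H)}{n}d\}$ is frequently equal to $d-1$ or only marginally smaller, so the inequality is tight and one must be careful with strict versus non-strict comparisons and with the small instances $n=10$ and $n=12$; those are covered by the hypothesis $n\ge 10$, but only through the explicit description of $T$ above rather than through a crude counting estimate.
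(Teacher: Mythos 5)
Your proof is correct and takes essentially the same route as the paper: bound $\kappa(H)\le\xi(H)\le\operatorname{M}(H)\le\operatorname{Z}(H)\le 2(m-1)=\delta(H)$ using Proposition \ref{prop:zero_forcing_consecutive_circulant_minus_one} and Corollary \ref{cor:parameter_inequality}, then establish $\kappa(H)=\delta(H)$ by checking the criterion of Theorem \ref{thm:circulant_vertex_connectivity} over all proper divisors of $n$. The only difference is bookkeeping: the paper exhibits the explicit residues $1,\dots,d-1$ for $d<m$ (and $1,\dots,m-2,m$ for $d=n/2$), whereas you count via the two- or three-element complement of the symmetric connection set; both cover the same divisor cases.
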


\begin{proof}
Since $ 2(m-1) = \delta(G) = \operatorname{Z}(G) $, we need only to show $
\kappa(G) = \delta(G) $. Let $ d $ be a positive divisor of $ n $ and let $ S'
= \{1, 2, \dots, m-2,m,n-m,n-(m-2),\dots,n-1\}$. If $ d < m $, then $ d-1 \leq
m-2$ and $
1,2,\dots,d-1 $ are $ d-1 $ distinct residue of $ S' $ modulo $ d $. Note that $ d = m $ is
impossible since $ m $ does not divide $ 2m + 1 $ or $ 2m + 2 $, as $ m \geq 3
$. If $ n $ is even and  $ d = \frac{n}{2} $, then $ \frac{\delta(G)}{n}d =
\frac{\delta(G)}{2} = \frac{2(m-1)}{2} = m-1 < d-1$. Furthermore $ 1,2,\dots,
m-2,m $ are $ m-1 $ distinct residue of $ S'$ modulo $ d $ which is
greater than or equal to $ \frac{\delta(G)}{n}d $. Therefore, by
\hyperref[thm:circulant_vertex_connectivity]{Theorem
\ref{thm:circulant_vertex_connectivity}} it must be the case that $ \kappa(G)
= \delta(G)$.
\end{proof}
\section{An application of equitable partitions}
\label{sec:applications_of_equitable_partition}
In this section we use an equitable partition of a circulant graph to bound the
nullity of the graph. It fact, the lower bound is obtained from the nullity of
a circulant graph of small order which possesses the same connection set as the
circulant graph of interest.
\bigskip

An \textit{equitable partition} of a graph is a partition of the vertex set $
V_0, V_1, \dots, V_k $ such that for all $ v \in V_i $ the number $ b_{ij} $ of
neighbors in $ V_j $ is constant for all $ V_j $. Let $ V_0, V_1, \dots, V_k $
be an equitable partition of $ V(G) $. We say a \textit{divisor} of $ G $ is a
weighted directed graph with vertex set $ V_0, V_1, \dots, V_k $ and arc $
(V_i, V_j) $ having weight $ b_{ij} $ if and only if $ b_{ij} \neq 0 $.  The
matrix $ [b_{ij}] $ is the \textit{divisor matrix} associated with the
equitable partition $ V_0, V_1, \dots, V_k $. It is known that an equitable
partition of a graph $ G $ can be used to find specific eigenvalues of $ A(G)
$ (see \cite{MR2571608}). 
\bigskip

\begin{figure}[h]
\centering
\begin{tikzpicture}[scale=0.45,font=\scriptsize]
\GraphInit[vstyle=Normal]
%
\tikzset{VertexStyle/.style = {shape = circle, draw=black,
minimum size=14pt, inner sep=0.5pt}}
\Vertex[L=\hbox{$0$},x=5.55cm,y=11.1cm]{v0}
\Vertex[L=\hbox{$1$},x=4.1136cm,y=10.9109cm]{v1}
\Vertex[L=\hbox{$2$},x=2.775cm,y=10.3564cm]{v2}
\Vertex[L=\hbox{$3$},x=1.6256cm,y=9.4744cm]{v3}
\Vertex[L=\hbox{$4$},x=0.7436cm,y=8.325cm]{v4}
\Vertex[L=\hbox{$5$},x=0.1891cm,y=6.9864cm]{v5}
\Vertex[L=\hbox{$6$},x=0.0cm,y=5.55cm]{v6}
\Vertex[L=\hbox{$7$},x=0.1891cm,y=4.1136cm]{v7}
\Vertex[L=\hbox{$8$},x=0.7436cm,y=2.775cm]{v8}
\Vertex[L=\hbox{$9$},x=1.6256cm,y=1.6256cm]{v9}
\Vertex[L=\hbox{${10}$},x=2.775cm,y=0.7436cm]{v10}
\Vertex[L=\hbox{${11}$},x=4.1136cm,y=0.1891cm]{v11}
\Vertex[L=\hbox{${12}$},x=5.55cm,y=0.0cm]{v12}
\Vertex[L=\hbox{${13}$},x=6.9864cm,y=0.1891cm]{v13}
\Vertex[L=\hbox{${14}$},x=8.325cm,y=0.7436cm]{v14}
\Vertex[L=\hbox{${15}$},x=9.4744cm,y=1.6256cm]{v15}
\Vertex[L=\hbox{${16}$},x=10.3564cm,y=2.775cm]{v16}
\Vertex[L=\hbox{${17}$},x=10.9109cm,y=4.1136cm]{v17}
\Vertex[L=\hbox{${18}$},x=11.1cm,y=5.55cm]{v18}
\Vertex[L=\hbox{${19}$},x=10.9109cm,y=6.9864cm]{v19}
\Vertex[L=\hbox{${20}$},x=10.3564cm,y=8.325cm]{v20}
\Vertex[L=\hbox{${21}$},x=9.4744cm,y=9.4744cm]{v21}
\Vertex[L=\hbox{${22}$},x=8.325cm,y=10.3564cm]{v22}
\Vertex[L=\hbox{${23}$},x=6.9864cm,y=10.9109cm]{v23}
\Edge[](v0)(v1)
\Edge[](v1)(v2)
\Edge[](v2)(v3)
\Edge[](v3)(v4)
\Edge[](v4)(v5)
\Edge[](v5)(v6)
\Edge[](v6)(v7)
\Edge[](v7)(v8)
\Edge[](v8)(v9)
\Edge[](v9)(v10)
\Edge[](v10)(v11)
\Edge[](v11)(v12)
\Edge[](v12)(v13)
\Edge[](v13)(v14)
\Edge[](v14)(v15)
\Edge[](v15)(v16)
\Edge[](v16)(v17)
\Edge[](v17)(v18)
\Edge[](v18)(v19)
\Edge[](v19)(v20)
\Edge[](v20)(v21)
\Edge[](v21)(v22)
\Edge[](v22)(v23)
\Edge[](v0)(v23)
\tikzstyle{EdgeStyle}=[bend left]
\Edge[](v13)(v16)
\Edge[](v3)(v6)
\Edge[](v9)(v12)
\Edge[](v15)(v18)
\Edge[](v16)(v19)
\Edge[](v14)(v17)
\Edge[](v8)(v11)
\Edge[](v2)(v5)
\Edge[](v20)(v23)
\Edge[](v1)(v4)
\Edge[](v10)(v13)
\Edge[](v11)(v14)
\Edge[](v17)(v20)
\Edge[](v7)(v10)
\Edge[](v4)(v7)
\Edge[](v6)(v9)
\Edge[](v19)(v22)
\Edge[](v5)(v8)
\Edge[](v0)(v3)
\Edge[](v18)(v21)
\Edge[](v12)(v15)
\tikzstyle{EdgeStyle}=[bend right]
\Edge[](v0)(v21)
\Edge[](v1)(v22)
\Edge[](v2)(v23)
\end{tikzpicture}
\begin{tikzpicture}[scale=0.43,font=\scriptsize]
\GraphInit[vstyle=Normal]
%
\tikzset{VertexStyle/.style = {shape = circle, font=\large}}
\Vertex[L=\hbox{$\rightarrow$},x=-2cm,y=5cm]{v29}
\tikzset{VertexStyle/.style = {shape = circle, draw=black,
minimum size=14pt, inner sep=0.5pt}}
\Vertex[L=\hbox{${0}$},x=5.55cm,y=11.1cm]{v0}
\Vertex[L=\hbox{${1}$},x=4.1136cm,y=10.9109cm]{v1}
\Vertex[L=\hbox{${2}$},x=2.775cm,y=10.3564cm]{v2}
\Vertex[L=\hbox{${3}$},x=1.6256cm,y=9.4744cm]{v3}
\Vertex[L=\hbox{${4}$},x=0.7436cm,y=8.325cm]{v4}
\Vertex[L=\hbox{${5}$},x=0.1891cm,y=6.9864cm]{v5}
\Vertex[L=\hbox{${6}$},x=0.0cm,y=5.55cm]{v6}
\Vertex[L=\hbox{${7}$},x=0.1891cm,y=4.1136cm]{v7}
\Vertex[L=\hbox{${0'}$},x=0.7436cm,y=2.775cm]{v8}
\Vertex[L=\hbox{${1'}$},x=1.6256cm,y=1.6256cm]{v9}
\Vertex[L=\hbox{${2'}$},x=2.775cm,y=0.7436cm]{v10}
\Vertex[L=\hbox{${3'}$},x=4.1136cm,y=0.1891cm]{v11}
\Vertex[L=\hbox{${4'}$},x=5.55cm,y=0.0cm]{v12}
\Vertex[L=\hbox{${5'}$},x=6.9864cm,y=0.1891cm]{v13}
\Vertex[L=\hbox{${6'}$},x=8.325cm,y=0.7436cm]{v14}
\Vertex[L=\hbox{${7'}$},x=9.4744cm,y=1.6256cm]{v15}
\Vertex[L=\hbox{${0''}$},x=10.3564cm,y=2.775cm]{v16}
\Vertex[L=\hbox{${1''}$},x=10.9109cm,y=4.1136cm]{v17}
\Vertex[L=\hbox{${2''}$},x=11.1cm,y=5.55cm]{v18}
\Vertex[L=\hbox{${3''}$},x=10.9109cm,y=6.9864cm]{v19}
\Vertex[L=\hbox{${4''}$},x=10.3564cm,y=8.325cm]{v20}
\Vertex[L=\hbox{${5''}$},x=9.4744cm,y=9.4744cm]{v21}
\Vertex[L=\hbox{${6''}$},x=8.325cm,y=10.3564cm]{v22}
\Vertex[L=\hbox{${7''}$},x=6.9864cm,y=10.9109cm]{v23}
\Edge[](v0)(v1)
\Edge[](v1)(v2)
\Edge[](v2)(v3)
\Edge[](v3)(v4)
\Edge[](v4)(v5)
\Edge[](v5)(v6)
\Edge[](v6)(v7)
\Edge[](v7)(v8)
\Edge[](v8)(v9)
\Edge[](v9)(v10)
\Edge[](v10)(v11)
\Edge[](v11)(v12)
\Edge[](v12)(v13)
\Edge[](v13)(v14)
\Edge[](v14)(v15)
\Edge[](v15)(v16)
\Edge[](v16)(v17)
\Edge[](v17)(v18)
\Edge[](v18)(v19)
\Edge[](v19)(v20)
\Edge[](v20)(v21)
\Edge[](v21)(v22)
\Edge[](v22)(v23)
\Edge[](v0)(v23)
\tikzstyle{EdgeStyle}=[bend left]
\Edge[](v13)(v16)
\Edge[](v3)(v6)
\Edge[](v9)(v12)
\Edge[](v15)(v18)
\Edge[](v16)(v19)
\Edge[](v14)(v17)
\Edge[](v8)(v11)
\Edge[](v2)(v5)
\Edge[](v20)(v23)
\Edge[](v1)(v4)
\Edge[](v10)(v13)
\Edge[](v11)(v14)
\Edge[](v17)(v20)
\Edge[](v7)(v10)
\Edge[](v4)(v7)
\Edge[](v6)(v9)
\Edge[](v19)(v22)
\Edge[](v5)(v8)
\Edge[](v0)(v3)
\Edge[](v18)(v21)
\Edge[](v12)(v15)
\tikzstyle{EdgeStyle}=[bend right]
\Edge[](v0)(v21)
\Edge[](v1)(v22)
\Edge[](v2)(v23)
\end{tikzpicture}
\caption{ $ \operatorname{Circ}[24,\{1,3\}] $ and a relabeling showing how
the vertices can be equitably partitioned.}
\label{fig:circ_24_13}
\end{figure}
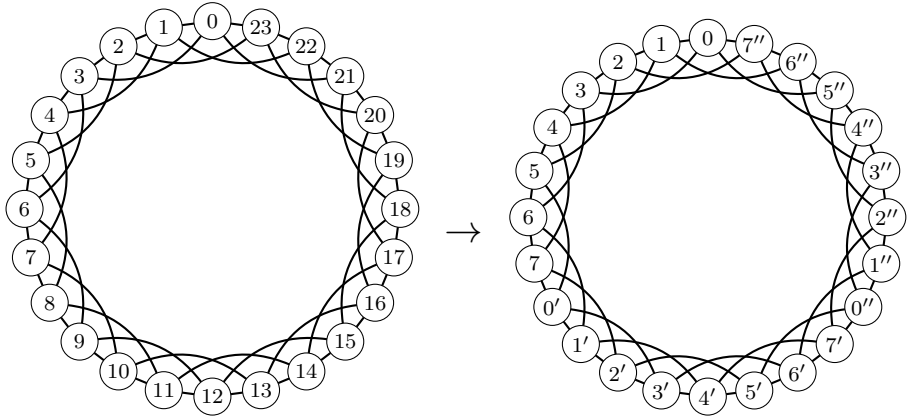

\begin{ex}
\label{ex:circ_8_13}

\hyperref[fig:circ_24_13]{Figure \ref{fig:circ_24_13}} shows the graph of the
circulant $ \operatorname{Circ}[24,\{1,3\}] $. By partitioning
the vertex set of $ \operatorname{Circ}[24,\{1,3\}] $ as in
\hyperref[fig:circ_24_13]{Figure \ref{fig:circ_24_13}}, it is clear that the
partition $ V_i = \{i, {i'}, {i''} \} $ for $ i = 0,1, \dots, 7 $ is an
equitable partition of $ \operatorname{Circ}[24,\{1,3\}] $.
\end{ex}

\begin{prop}{\rm\cite[Page 196]{MR1829620}}
\label{prop:automorphis_give_equitable_partition}
Let $ \phi $ be an automorphism of $ G $. Then the orbits of $ \phi $ give an
equitable partition of $ V(G) $. 
\end{prop}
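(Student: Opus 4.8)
The plan is to verify directly that the partition of $V(G)$ into $\langle\phi\rangle$-orbits satisfies the defining condition of an equitable partition. Write $V_0, V_1, \dots, V_k$ for the distinct orbits of the cyclic group generated by $\phi$ acting on $V(G)$; equivalently, $V_0,\dots,V_k$ are the equivalence classes of the relation $u \sim u'$ iff $u' = \phi^t(u)$ for some $t \in \mathbb{Z}$. These classes are pairwise disjoint and cover $V(G)$, so they form a partition, and each $V_j$ is by construction invariant under $\phi$ (and under every power of $\phi$). It remains to check that for all $i,j$ the number of neighbors in $V_j$ of a vertex of $V_i$ does not depend on the chosen vertex of $V_i$.

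So I would fix indices $i$ and $j$ and two vertices $v, w \in V_i$. Since $v$ and $w$ lie in the same orbit, there is an integer $m$ with $w = \phi^m(v)$, and $\phi^m$ is again an automorphism of $G$. I would then invoke two elementary facts about $\phi^m$. First, being an automorphism, $\phi^m$ sends $N_G(v)$ bijectively onto $N_G(\phi^m(v)) = N_G(w)$, because $u \in N_G(v) \iff uv \in E(G) \iff \phi^m(u)\phi^m(v) \in E(G) \iff \phi^m(u) \in N_G(w)$. Second, since $\phi^m \in \langle\phi\rangle$, it maps each orbit $V_j$ onto itself. Combining these, $\phi^m$ restricts to a bijection from $N_G(v) \cap V_j$ onto $N_G(w) \cap V_j$, so $\lvert N_G(v) \cap V_j\rvert = \lvert N_G(w) \cap V_j\rvert$. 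Calling this common value $b_{ij}$, the number of neighbors in $V_j$ of any vertex of $V_i$ equals the constant $b_{ij}$, which is exactly the condition that $V_0,\dots,V_k$ be an equitable partition.

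There is essentially no hard step here; the only points needing care are bookkeeping ones. One should state explicitly that "the orbits of $\phi$" means the orbits of the group $\langle\phi\rangle$ it generates, so that each $V_j$ is genuinely $\phi$-invariant and the phrase "$\phi^m$ maps $V_j$ onto itself" is justified. If one wants to cover the general (multi-)graphs discussed earlier in the paper, the same argument goes through with $N_G(v)$ read as a multiset: $\phi^m$ preserves edge multiplicities, hence induces a multiset bijection from the part of $N_G(v)$ lying in $V_j$ onto the corresponding part of $N_G(w)$, giving equality of the multiplicities $b_{ij}$ in the general sense.
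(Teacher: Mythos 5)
Your argument is correct and complete: since any two vertices of the same orbit are related by a power of $\phi$, which is an automorphism fixing each orbit setwise, it carries $N_G(v)\cap V_j$ bijectively onto $N_G(w)\cap V_j$, which is exactly the equitability condition. The paper does not prove this proposition but cites it from Godsil and Royle, and your proof is the standard textbook argument, so there is nothing to compare beyond noting agreement.
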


Note that the equitable partition in \hyperref[ex:circ_8_13]{Example
\ref{ex:circ_8_13}} is obtained from the automorphism $ \varphi(i) = i + 8. $
\bigskip

\begin{thm}{\rm\cite[Theorem 3.9.5]{MR2571608}}
\label{thm:equitable_partition_eigenvalues}
Let $ G $ be a graph and let $ D $ be a divisor matrix of some equitable
partition of $ V(G) $. Then the eigenvalues of $ D $ are eigenvalues of $ A(G)
$ (including multiplicity).
\end{thm}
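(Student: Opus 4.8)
The plan is to realize the divisor matrix as the action of $A(G)$ on the subspace of vectors that are constant on each block of the equitable partition. Let $V_0,V_1,\dots,V_k$ be the equitable partition, write $n_i=|V_i|$, and let $P$ be the $n\times(k+1)$ \emph{characteristic matrix} whose $(v,i)$ entry is $1$ if $v\in V_i$ and $0$ otherwise. First I would verify the identity $A(G)P=PD$, where $D=[b_{ij}]$: the $(v,j)$ entry of $A(G)P$ counts the neighbors of $v$ lying in $V_j$, and by the defining property of an equitable partition this count equals $b_{ij}$ whenever $v\in V_i$, which is exactly the $(v,j)$ entry of $PD$. Since the blocks of a partition are disjoint and nonempty, $P$ has linearly independent columns and $P^{T}P=\operatorname{diag}(n_0,\dots,n_k)$ is invertible. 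The bare statement that every eigenvalue of $D$ is an eigenvalue of $A(G)$ then follows at once: if $Dx=\lambda x$ with $x\neq 0$, then $A(G)(Px)=P(Dx)=\lambda(Px)$ and $Px\neq 0$ because $P$ has full column rank, so $\lambda\in\spec(A(G))$.

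To upgrade this to a statement about multiplicities I would symmetrize $D$ by rescaling with the block sizes. Counting the edges of $G$ between $V_i$ and $V_j$ in two ways gives $n_i b_{ij}=n_j b_{ji}$ for all $i,j$. Let $N=(P^{T}P)^{1/2}=\operatorname{diag}(\sqrt{n_0},\dots,\sqrt{n_k})$ and set $\widehat D:=NDN^{-1}$, so $\widehat D_{ij}=\sqrt{n_i/n_j}\,b_{ij}$; the relation $n_i b_{ij}=n_j b_{ji}$ shows $\widehat D_{ij}=\widehat D_{ji}$, so $\widehat D$ is a real symmetric matrix that is similar to $D$ (in particular all eigenvalues of $D$ are real, with the same multiplicities as those of $\widehat D$). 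Likewise $\widehat P:=PN^{-1}$ has orthonormal columns, $\widehat P^{T}\widehat P=N^{-1}(P^{T}P)N^{-1}=I_{k+1}$, and the intertwining identity transfers: $A(G)\widehat P=A(G)PN^{-1}=PDN^{-1}=PN^{-1}(NDN^{-1})=\widehat P\widehat D$.

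Finally I would transport an orthonormal eigenbasis through $\widehat P$. Since $\widehat D$ is real symmetric it has an orthonormal eigenbasis $y_1,\dots,y_{k+1}$ of $\mathbb{R}^{k+1}$ with $\widehat D y_\ell=\mu_\ell y_\ell$; then $A(G)(\widehat P y_\ell)=\widehat P\widehat D y_\ell=\mu_\ell(\widehat P y_\ell)$, and the vectors $\widehat P y_1,\dots,\widehat P y_{k+1}$ are orthonormal because $\widehat P$ is an isometry of $\mathbb{R}^{k+1}$ onto its column space. Hence each eigenvalue $\mu$ of $\widehat D$ of multiplicity $m$ produces $m$ mutually orthogonal eigenvectors of $A(G)$ for $\mu$, so $\mu$ occurs with multiplicity at least $m$ in $\spec(A(G))$; combining this with the similarity of $\widehat D$ and $D$ gives the assertion that $\spec(D)$ is a sub-multiset of $\spec(A(G))$. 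The one genuinely delicate point — and the step I would take most care over — is the symmetrization: the divisor matrix $D$ itself need not be symmetric, and one must notice that conjugating by the diagonal matrix of square roots of block sizes repairs this (equivalently, that $A(G)$ restricted to the column space of $P$ is self-adjoint for the standard inner product), which is what makes the multiplicity bookkeeping work.
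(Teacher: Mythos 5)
Your proof is correct. Note that the paper does not prove this statement at all --- it is quoted verbatim from Brouwer--Haemers \cite[Theorem 3.9.5]{MR2571608} --- so there is no in-paper argument to compare against. What you give is essentially the standard proof: the intertwining identity $A(G)P=PD$ for the characteristic matrix $P$, full column rank of $P$ for the bare containment of spectra, and then a device to get multiplicities. Your multiplicity step (symmetrizing $D$ by conjugation with $\operatorname{diag}(\sqrt{n_0},\dots,\sqrt{n_k})$, justified by the edge-count identity $n_i b_{ij}=n_j b_{ji}$, and pushing an orthonormal eigenbasis of $\widehat D$ through the isometry $\widehat P$) is sound and is a clean alternative to the textbook route, which instead observes that $\operatorname{col}(P)$ is $A(G)$-invariant, hence (by symmetry of $A(G)$) so is its orthogonal complement, so that the characteristic polynomial of $D$ divides that of $A(G)$. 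The two arguments buy the same thing; yours makes the reality of $\operatorname{spec}(D)$ and the orthogonality bookkeeping explicit, while the invariant-subspace argument is shorter once one knows that the restriction of $A(G)$ to $\operatorname{col}(P)$ is represented by $D$ in the basis of columns of $P$.
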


\begin{thm}
\label{thm:equitable_partition_non_consecutive_circulants}
Let $ G $ be the circulant graph $ \operatorname{Circ}[nk , S] $ where $ k $
is a positive integer and $ S \subseteq
\big[\big\lceil\tfrac{n}{2}\big\rceil - 1\big] $. Then
the adjacency matrix of the circulant graph $ \operatorname{Circ}[n,S]  $ is a
divisor matrix of $ G $.
\end{thm}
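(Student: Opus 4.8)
The plan is to produce the equitable partition explicitly from a cyclic-shift automorphism of $G = \operatorname{Circ}[nk,S]$ and then check that its divisor matrix is exactly $A(\operatorname{Circ}[n,S])$. Throughout I identify $V(G)$ with $\mathbb{Z}_{nk}$.

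First I would observe that $\varphi\colon i \mapsto i+n \pmod{nk}$ is an automorphism of $G$, since every translation of $\mathbb{Z}_{nk}$ preserves the edge set $\big\{\{i,i\pm s\} : s \in S\big\}$. The element $n$ has order $k$ in $\mathbb{Z}_{nk}$, so $\varphi$ has order $k$ and its orbits are the $n$ sets $V_j = \{\, j + tn \bmod nk : 0 \le t \le k-1\,\}$, $j = 0,1,\dots,n-1$, each of cardinality $k$. By \hyperref[prop:automorphis_give_equitable_partition]{Proposition \ref{prop:automorphis_give_equitable_partition}}, $V_0, V_1, \dots, V_{n-1}$ is an equitable partition of $V(G)$, so the numbers $b_{j\ell}$ (the number of neighbors in $V_\ell$ of a vertex of $V_j$) are well defined, and $[b_{j\ell}]$ is the associated divisor matrix.

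Next I would compute $b_{j\ell}$. A vertex $v \in V_j$ has neighbor set $\{v \pm s : s \in S\}$, and $v\pm s$ lands in $V_\ell$ exactly when $j \pm s \equiv \ell \pmod n$; in particular $b_{j\ell}$ depends only on $j$ and $\ell$, as it must. These $2|S|$ neighbors are genuinely distinct in $\mathbb{Z}_{nk}$: $v+s = v-s'$ would force $s+s' \equiv 0 \pmod{nk}$, impossible as $0 < s+s' < nk$, and $v\pm s = v\pm s'$ forces $s = s'$. The key step, and the place the hypothesis $S \subseteq [\lceil n/2\rceil - 1]$ is needed, is to show $b_{j\ell}\le 1$. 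If $j+s_1 \equiv \ell \equiv j+s_2 \pmod n$, then $s_1 \equiv s_2 \pmod n$, and since $1 \le s_1,s_2 \le \lceil n/2\rceil - 1 < n$ this gives $s_1 = s_2$ (the case $j-s_1 \equiv j-s_2$ is identical); and if $j+s_1 \equiv \ell \equiv j-s_2 \pmod n$, then $s_1+s_2 \equiv 0 \pmod n$, which is impossible because $2 \le s_1+s_2 \le 2(\lceil n/2\rceil - 1) \le n-1$. Hence at most one term contributes and $b_{j\ell}\in\{0,1\}$.

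Finally, $b_{j\ell} = 1$ precisely when $\ell \equiv j\pm s \pmod n$ for some $s\in S$, which is exactly the condition that $\{j,\ell\}$ is an edge of $\operatorname{Circ}[n,S]$; moreover $b_{jj}=0$ since neither $s\equiv 0$ nor $-s\equiv 0\pmod n$ can hold for $s\in S$. Therefore $[b_{j\ell}] = A(\operatorname{Circ}[n,S])$, proving the claim. I expect the only real obstacle to be the middle computation — ruling out that the two ``directions'' $j+s$ and $j-s$ ever fall in the same block $V_\ell$ — and this is precisely what the restriction on $S$ guarantees.
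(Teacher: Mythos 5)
Your proposal is correct and follows essentially the same route as the paper: the translation automorphism $i \mapsto i+n \bmod nk$ yields the equitable partition into residue classes mod $n$, the hypothesis $S \subseteq \big[\lceil n/2\rceil - 1\big]$ rules out $s_1 \pm s_2 \equiv 0 \pmod n$ so that $b_{j\ell} \le 1$, and the nonzero entries are identified with adjacency in $\operatorname{Circ}[n,S]$. Your write-up is, if anything, slightly more explicit about the degree count and the diagonal entries, but it is the same argument.
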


\begin{proof}
\label{pf:equitable_partition_non_consecutive_circulants}
The orbits of the automorphism $ \varphi(t) \equiv {t+n \bmod nk} $ of $ G $ are 
\[
V_i = \{ r \in V( \operatorname{Circ}[nk,S]) \,| \,  r \equiv i\bmod n \}.
\] 
Hence the partition $ V_0, V_1, \dots, V_{n-1} $ is an equitable partition of
$ G $.
\bigskip 

Let $ [b_{ij}] $ be the divisor matrix of $ G $ with respect to the given
equitable partition and let $ [a_{ij}]  $ be the adjacency matrix of $
\operatorname{Circ}[n,S] $. It suffices to show for all $ i $ and $j $, $
b_{ij} \leq 1 $ and $ b_{ij}$ is nonzero if and only if $ a_{ij}$ is nonzero.
Suppose $ s_1$ and $s_2$ are distinct elements in $ S $, $ V_iV_j $ is an arc, and $ {i + s_1} \in V_j $. Since  $ s_1, s_2 \in
\big[\big\lceil \tfrac{n}{2} \big\rceil - 1 \big] $, $ s_1 \pm s_2 \not\equiv 0 \bmod n$ which
implies $ i + s_1 \not\equiv i \pm s_2 \bmod n $ and $ i \pm s_2 \not\equiv j
\bmod n $. Hence $ {i \pm s_2} \notin V_j $. Also $ s_1 \in
\big[\big\lceil \tfrac{n}{2} \big\rceil - 1 \big] $, so $ 2s_1 \not\equiv 0 \bmod n $ which
implies $ i + s_1 \not\equiv i - s_1 \bmod n$ and $ i - s_1 \not\equiv j \bmod
n $.  This shows that $ {i - s_1}, {i + s_2}, {i - s_2} \notin V_j $.
Hence $ b_{ij} \leq 1 $ for all $ i $ and $j$. 
\bigskip

Suppose $V_i$ is adjacent to $ V_j $. Then there exists a vertex $ \ell \in
V_i $ and $ p \in V_j $ such that $ \ell $ is adjacent to $ p $, in $ G $. Thus, $ \ell - p \equiv i - j \bmod n $.  By definition of adjacency in $ G $,
for some $ s \in S $, $ \ell\equiv p + s \mod nk $ or $ \ell \equiv p - s \mod
nk $.  Hence $ \ell - p \equiv s \bmod nk $ or $ p - \ell \equiv s \bmod nk $.
Thus, $ i - j \equiv \ell - p \equiv s \bmod n $ or $ i - j \equiv \ell - p
\equiv -s \bmod n $. In either case, $ i $ is adjacent to $ j $ in $
\operatorname{Circ}[n,S] $. Now suppose $ i $ is adjacent to $ j $ in
$\operatorname{Circ}[n,S] $ where $ 0 \leq i,j \leq n-1 $ as integers. Then it
must be the case that $ j = i + s \bmod n $ or $ j = i - s \bmod n $.  In $
\operatorname{Circ}[nk,S] $, $ i \in V_i $ and $ {i+s} \in V_j $ or $ {i-s} \in V_j $. In either case, $ b_{ij} \neq 0 $ in the divisor matrix of $G$. 
\end{proof}

When $ n $ is even in
\hyperref[thm:equitable_partition_non_consecutive_circulants]{Theorem
\ref{thm:equitable_partition_non_consecutive_circulants}} the connection set
cannot be extended to include $ \frac{n}{2} $.
\bigskip

\begin{ex}
\label{ex:even_case_divisor_matrix}
Let $ G = \operatorname{Circ}[12,\{1,3\}] $ and $ H =
\operatorname{Circ}[6,\{1,3\}] = K_{3,3} $. Furthermore, using the equitable
partition described in the proof of
\hyperref[thm:equitable_partition_non_consecutive_circulants]{Theorem
\ref{thm:equitable_partition_non_consecutive_circulants}}, 
\[
V_0 = \{0,6\} , V_1 = \{1,7\}, V_2 = \{2,8\}, V_3 = \{3,9\}, 
\]
$ b_{0,1} = 1, b_{0,2} = 0, b_{0,3} = 2 $, 
\[
[b_{ij}]=
\begin{pmatrix}
0 & 1 & 0 & 2 & 0 & 1 \\
1 & 0 & 1 & 0 & 2 & 0 \\
0 & 1 & 0 & 1 & 0 & 2 \\
2 & 0 & 1 & 0 & 1 & 0 \\
0 & 2 & 0 & 1 & 0 & 1 \\
1 & 0 & 2 & 0 & 1 & 0 \\
\end{pmatrix} ,
\]
and $ A(\operatorname{Circ}[6,\{1,3\}]) $ is not the divisor matrix of $ G $. By
computation, the eigenvalues of $ A(G) $ are $ \pm 4, \pm \sqrt{3}, \pm 1, 0 $ and
$ H $ is bipartite and $ 3 $ - regular which implies $ \pm 3 $ are eigenvalues
of $ H $. This shows that the adjacency matrix of $ H $ is not a divisor matrix
of $ G $.
\end{ex}

The next corollary is a direct result of
\hyperref[thm:equitable_partition_eigenvalues]{Theorem
\ref{thm:equitable_partition_eigenvalues}} and
\hyperref[thm:equitable_partition_non_consecutive_circulants]{Theorem
\ref{thm:equitable_partition_non_consecutive_circulants}}.

\begin{cor}
\label{cor:spectrum_subset_nonconsective_circulants}
Consider the circulant graph $ \operatorname{Circ}[nk , S] $ where $ k $ is a
positive integer and $ S \subseteq \big[\big\lceil\tfrac{n}{2}\big\rceil -1\big]
$. Then 
\[
\operatorname{spec}(A(\operatorname{Circ}[n,S])) \subseteq
\operatorname{spec}(A(\operatorname{Circ}[nk,S]))
\]
and $ \operatorname{null}(A(\operatorname{Circ}[n,S])) \leq
\operatorname{null}(A(\operatorname{Circ}[nk,S])) $.
\end{cor}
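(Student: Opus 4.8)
The plan is to obtain this as an immediate consequence of the two preceding theorems, with no new computation. The hypothesis $S \subseteq \big[\big\lceil\tfrac{n}{2}\big\rceil - 1\big]$ is exactly the hypothesis of \hyperref[thm:equitable_partition_non_consecutive_circulants]{Theorem \ref{thm:equitable_partition_non_consecutive_circulants}}, so that result applies verbatim: the adjacency matrix $A(\operatorname{Circ}[n,S])$ is a divisor matrix of $G = \operatorname{Circ}[nk,S]$, associated with the equitable partition $V_i = \{\, r \in V(G) : r \equiv i \bmod n \,\}$ arising from the automorphism $\varphi(t) \equiv t+n \bmod nk$.

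With that in hand, the first step is to apply \hyperref[thm:equitable_partition_eigenvalues]{Theorem \ref{thm:equitable_partition_eigenvalues}} to this same equitable partition: the eigenvalues of the divisor matrix---here $A(\operatorname{Circ}[n,S])$---are eigenvalues of $A(G) = A(\operatorname{Circ}[nk,S])$, counted with multiplicity. Read as a statement about multisets of eigenvalues, this is precisely $\operatorname{spec}(A(\operatorname{Circ}[n,S])) \subseteq \operatorname{spec}(A(\operatorname{Circ}[nk,S]))$, the first assertion of the corollary. The second step is to restrict attention to the single eigenvalue $0$: since the multiset $\operatorname{spec}(A(\operatorname{Circ}[n,S]))$ embeds in $\operatorname{spec}(A(\operatorname{Circ}[nk,S]))$, the number of occurrences of $0$ in the former is at most the number of occurrences in the latter, which is exactly $\operatorname{null}(A(\operatorname{Circ}[n,S])) \leq \operatorname{null}(A(\operatorname{Circ}[nk,S]))$.

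The only point needing care---and hence the ``hard part,'' such as it is---is that the containment in the corollary must be interpreted as a containment of multisets, not merely of sets; this is why the parenthetical ``including multiplicity'' in \hyperref[thm:equitable_partition_eigenvalues]{Theorem \ref{thm:equitable_partition_eigenvalues}} is indispensable, and it is precisely what legitimizes the passage from spectral containment to the nullity inequality. Beyond that, the argument is a two-line chaining of the cited results.
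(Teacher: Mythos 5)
Your proof is correct and is exactly the route the paper takes: the paper states the corollary as a direct consequence of Theorems \ref{thm:equitable_partition_eigenvalues} and \ref{thm:equitable_partition_non_consecutive_circulants}, and your chaining of the two (with the multiset reading of ``including multiplicity'' justifying the nullity inequality) is the intended argument.
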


It was shown in \hyperref[thm:circulant_multiple_of_3]{Theorem
\ref{thm:circulant_multiple_of_3}} that $ \operatorname{M}(
\operatorname{Circ}[3k,\{1,3\}]) = \operatorname{Z}(
\operatorname{Circ}[3k,\{1,3\}]) = 6 $ for $ k \geq 8 $. The next result
establishes field independence, in addition to showing that the maximum nullity
equals the zero forcing number for many additional circulants..
\bigskip

\begin{thm}
\label{thm:m_equals_z_circ_8_13}
Let $ k $ be a positive integer and let $ \ell $ be an odd integer between $ 3
$ and $ 21 $. Then
\[
\operatorname{M}(\operatorname{Circ}[(\ell^2-1)k,\{1,\ell\}])=
\operatorname{Z}(\operatorname{Circ}[(\ell^2-1)k,\{1,\ell\}])=2\ell,
\]
$\operatorname{Circ}[(\ell^2-1)k,\{1,\ell\}]$ has field independent minimum
rank, and its adjacency matrix is an universally optimal matrix.
\end{thm}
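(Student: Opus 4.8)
The plan is to trap the maximum nullity between $2\ell$ from below and $\operatorname{Z}\le 2\ell$ from above, where the lower bound comes from the nullity of the adjacency matrix of the ``small'' circulant $\operatorname{Circ}[\ell^2-1,\{1,\ell\}]$ together with Corollary~\ref{cor:spectrum_subset_nonconsective_circulants}. Write $n=(\ell^2-1)k$ and $G=\operatorname{Circ}[n,\{1,\ell\}]$. Since $\ell\ge 3$ we have $\ell^2-1\ge 2\ell$, so $\{1,\ell\}$ is a legitimate connection set with $\max\{1,\ell\}=\ell$, and Proposition~\ref{prop:zero_forcing_number_non_consecutive_circulants} gives $\operatorname{Z}(G)\le 2\ell$.

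The main step is to show $\operatorname{null}(A(\operatorname{Circ}[\ell^2-1,\{1,\ell\}]))=2\ell$. Put $N=\ell^2-1=(\ell-1)(\ell+1)$; both factors are even since $\ell$ is odd. The eigenvalues of $A(\operatorname{Circ}[N,\{1,\ell\}])$ are $\lambda_j=2\cos(2\pi j/N)+2\cos(2\pi\ell j/N)$ for $j=0,\dots,N-1$, and by sum-to-product $\lambda_j=4\cos\!\big(\tfrac{(\ell+1)\pi j}{N}\big)\cos\!\big(\tfrac{(\ell-1)\pi j}{N}\big)$, so $\lambda_j=0$ exactly when $(\ell+1)j\equiv N/2\pmod N$ or $(\ell-1)j\equiv N/2\pmod N$. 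Because $(\ell+1)\mid N$ and $\tfrac{\ell-1}{2}\in\mathbb{Z}$, the first congruence is equivalent to $j\equiv\tfrac{\ell-1}{2}\pmod{\ell-1}$, which has $\ell+1$ solutions in $\{0,\dots,N-1\}$; symmetrically the second is equivalent to $j\equiv\tfrac{\ell+1}{2}\pmod{\ell+1}$, with $\ell-1$ solutions. These two solution sets are disjoint: a common $j$ would yield integers $a,b$ with $a(\ell-1)-b(\ell+1)=1$, which is impossible since the left side is even. Hence there are exactly $2\ell$ zero eigenvalues. (Because the hypothesis $3\le\ell\le 21$ restricts to a finite list of ten circulants, the equality $\operatorname{null}=2\ell$ may alternatively be confirmed by direct computation for each admissible $\ell$.)

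To finish, observe that $\{1,\ell\}\subseteq\big[\lceil(\ell^2-1)/2\rceil-1\big]$ for $\ell\ge 3$, so Corollary~\ref{cor:spectrum_subset_nonconsective_circulants} gives $\operatorname{null}(A(\operatorname{Circ}[\ell^2-1,\{1,\ell\}]))\le\operatorname{null}(A(G))$, i.e.\ $\operatorname{null}(A(G))\ge 2\ell$. Combining this with $\operatorname{null}(A(G))\le\operatorname{M}(G)$ and Proposition~\ref{prop:M_is_less_than_Z_over_any_field},
\[
2\ell\ \le\ \operatorname{null}(A(G))\ \le\ \operatorname{M}(G)\ \le\ \operatorname{Z}(G)\ \le\ 2\ell,
\]
so all of these quantities equal $2\ell$; in particular $\operatorname{null}(A(G)-0\cdot I)=\operatorname{Z}(G)$. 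Corollary~\ref{cor:M_equal_Z_field_independence} then yields that $G$ has field-independent minimum rank, that $A(G)$ is universally optimal, and that $\operatorname{M}(\mathcal{F},G)=\operatorname{Z}(G)=2\ell$ for every field $\mathcal{F}$.

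I expect the only genuine obstacle to be the nullity count for $\operatorname{Circ}[\ell^2-1,\{1,\ell\}]$, and within it the disjointness of the two families of vanishing indices, which is what pins the count down to exactly $2\ell$ rather than something larger; everything else is formal bookkeeping with results already in hand. Note, moreover, that only the inequality $\operatorname{null}(A(\operatorname{Circ}[\ell^2-1,\{1,\ell\}]))\ge 2\ell$ is actually needed, since the reverse estimate $\operatorname{null}(A(G))\le 2\ell$ is automatic from $\operatorname{M}(G)\le\operatorname{Z}(G)\le 2\ell$.
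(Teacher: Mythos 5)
Your proof is correct and follows the same skeleton as the paper's: the upper bound $\operatorname{Z}(G)\le 2\ell$ from Proposition~\ref{prop:zero_forcing_number_non_consecutive_circulants}, the transfer of the nullity of the small circulant $\operatorname{Circ}[\ell^2-1,\{1,\ell\}]$ up to $\operatorname{Circ}[(\ell^2-1)k,\{1,\ell\}]$ via Corollary~\ref{cor:spectrum_subset_nonconsective_circulants}, and field independence via Corollary~\ref{cor:M_equal_Z_field_independence} with $\lambda=0$. The one place you genuinely diverge is the key step: the paper disposes of $\operatorname{null}(A(\operatorname{Circ}[\ell^2-1,\{1,\ell\}]))=2\ell$ by saying it ``is easily verified using computer software,'' whereas you prove it in closed form from the circulant eigenvalues $\lambda_j=2\cos(2\pi j/N)+2\cos(2\pi\ell j/N)=4\cos\bigl(\tfrac{(\ell+1)\pi j}{N}\bigr)\cos\bigl(\tfrac{(\ell-1)\pi j}{N}\bigr)$, counting $\ell+1$ and $\ell-1$ vanishing indices from the two factors and using a parity argument ($a(\ell-1)-b(\ell+1)=1$ is impossible with both coefficients even) to show the two families are disjoint. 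I checked the details: the reduction of $(\ell+1)j\equiv N/2\pmod N$ to $j\equiv\tfrac{\ell-1}{2}\pmod{\ell-1}$ is valid since $N/2=(\ell+1)\tfrac{\ell-1}{2}$, the solution counts are right, and the hypotheses of Proposition~\ref{prop:zero_forcing_number_non_consecutive_circulants} and Corollary~\ref{cor:spectrum_subset_nonconsective_circulants} are satisfied for $\ell\ge 3$. What your version buys is significant: the argument nowhere uses $\ell\le 21$, so it establishes the nullity count, and hence $\operatorname{M}=\operatorname{Z}=2\ell$ with field independence, for \emph{all} odd $\ell\ge 3$ and all $k\ge 1$ --- in effect settling Conjecture~\ref{conj:m_equals_z_circ} rather than just the finitely many cases the paper verifies by machine.
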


\begin{proof}
\label{pf:m_equals_z_circ_8_13}
Let $ n = \ell^2-1 $, $ S = \{1,\ell\} $, and $ G = \operatorname{Circ}[nk,S] $
for $ k \geq 1. $ By
\hyperref[prop:zero_forcing_number_non_consecutive_circulants]{Proposition
\ref{prop:zero_forcing_number_non_consecutive_circulants}} and
\hyperref[prop:M_is_less_than_Z_over_any_field]{Proposition
\ref{prop:M_is_less_than_Z_over_any_field}}, $ \operatorname{M}(G) \leq \operatorname{Z}(G) \leq 2\ell $. Thus it
suffices to show that $ \operatorname{null}(A( \operatorname{Circ}[n,S])) =
2\ell. $ This is easily verified using computer software. (SageMath offers
commands for computing the adjacency matrix of a graph and its nullity.)
\end{proof}

\begin{conj}
\label{conj:m_equals_z_circ}
For all positive values of $ k $ and odd $ \ell $,
\[
\operatorname{M}(\operatorname{Circ}[(\ell^2-1)k,\{1,\ell\}])=
\operatorname{Z}(\operatorname{Circ}[(\ell^2-1)k,\{1,\ell\}])=2\ell.
\]
and field independent minimum rank with universally optimal matrix $ A(G) $.
\end{conj}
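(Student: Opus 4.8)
The plan is to mimic the proof of Theorem~\ref{thm:m_equals_z_circ_8_13} but to replace its computer check by a closed-form evaluation of $\operatorname{null}(A(\operatorname{Circ}[\ell^2-1,\{1,\ell\}]))$ valid for every odd $\ell\ge 3$, the case $\ell=1$ being degenerate. Set $n=\ell^2-1$, $S=\{1,\ell\}$, and $G_k=\operatorname{Circ}[nk,S]$. One first records that $\ell\le\lceil n/2\rceil-1$ for all odd $\ell\ge 3$ (equivalently $(\ell-3)(\ell+1)\ge 0$), so Corollary~\ref{cor:spectrum_subset_nonconsective_circulants} applies and gives $\operatorname{null}(A(\operatorname{Circ}[n,S]))\le\operatorname{null}(A(G_k))$. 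Since $\max S=\ell$, Propositions~\ref{prop:zero_forcing_number_non_consecutive_circulants} and~\ref{prop:M_is_less_than_Z_over_any_field} give $\operatorname{M}(G_k)\le\operatorname{Z}(G_k)\le 2\ell$. Hence it suffices to prove $\operatorname{null}(A(\operatorname{Circ}[n,S]))=2\ell$; once that is known, the chain $2\ell\le\operatorname{null}(A(G_k))\le\operatorname{M}(G_k)\le\operatorname{Z}(G_k)\le 2\ell$ forces equality throughout and in particular $\operatorname{null}(A(G_k))=\operatorname{Z}(G_k)$, so Corollary~\ref{cor:M_equal_Z_field_independence} (applied with $\lambda=0$) yields the field-independence and universal-optimality assertions.

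The heart of the argument is the factorization $n=\ell^2-1=(\ell-1)(\ell+1)$. The eigenvalues of $\operatorname{Circ}[n,\{1,\ell\}]$ are $\lambda_j=2\cos\frac{2\pi j}{n}+2\cos\frac{2\pi j\ell}{n}$ for $j=0,1,\dots,n-1$ (the usual circulant spectrum, legitimate here since $\ell\neq n/2$), and the sum-to-product identity together with $(\ell\pm 1)/n=1/(\ell\mp 1)$ gives
\[
\lambda_j \;=\; 4\cos\frac{\pi j(\ell+1)}{n}\,\cos\frac{\pi j(\ell-1)}{n}
\;=\; 4\cos\frac{\pi j}{\ell-1}\,\cos\frac{\pi j}{\ell+1}.
\]
Because the circulant eigenvectors $(\omega^{jt})_{t=0}^{n-1}$ with $\omega=e^{2\pi i/n}$ are linearly independent, $\operatorname{null}(A(\operatorname{Circ}[n,S]))$ equals the number of $j\in\{0,\dots,n-1\}$ with $\lambda_j=0$, i.e.\ with $\cos\frac{\pi j}{\ell-1}=0$ or $\cos\frac{\pi j}{\ell+1}=0$. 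Since $\ell$ is odd, $(\ell-1)/2$ and $(\ell+1)/2$ are integers and these conditions become $j\equiv\frac{\ell-1}{2}\pmod{\ell-1}$ (which has $n/(\ell-1)=\ell+1$ solutions in range) and $j\equiv\frac{\ell+1}{2}\pmod{\ell+1}$ (which has $n/(\ell+1)=\ell-1$ solutions in range).

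The one point that needs care — and the step I expect to be the main obstacle — is that these two solution sets are disjoint, so that the total count is exactly $(\ell+1)+(\ell-1)=2\ell$ with no double-counting. A solution of the first congruence has the form $j=\frac{\ell-1}{2}\cdot(\text{odd})$, hence $2$-adic valuation $v_2(j)=v_2(\ell-1)-1$; a solution of the second has $v_2(j)=v_2(\ell+1)-1$. But $\ell-1$ and $\ell+1$ are consecutive even integers, so exactly one of them is $\equiv 2\pmod 4$ and the other is divisible by $4$; thus $v_2(\ell-1)\neq v_2(\ell+1)$ and no $j$ satisfies both congruences (and $j=0$ satisfies neither). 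This gives $\operatorname{null}(A(\operatorname{Circ}[\ell^2-1,\{1,\ell\}]))=2\ell$, which by the reduction above proves the conjecture for all $k\ge 1$ and all odd $\ell\ge 3$, field independence included with $A(G)$ universally optimal. (For even $\ell$ the same factorization shows the nullity is $0$, which is why the statement is restricted to odd $\ell$.)
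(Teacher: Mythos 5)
Your argument is correct, and it does something the paper does not: the statement you were given is left as a conjecture there, with only the finitely many cases $3\le\ell\le 21$ established (Theorem~\ref{thm:m_equals_z_circ_8_13}) by a computer check of $\operatorname{null}(A(\operatorname{Circ}[\ell^2-1,\{1,\ell\}]))$. Your outer scaffolding is exactly the paper's: the zero forcing bound $\operatorname{Z}\le 2\ell$ from Proposition~\ref{prop:zero_forcing_number_non_consecutive_circulants}, the transfer of nullity from the base circulant to $\operatorname{Circ}[(\ell^2-1)k,S]$ via the equitable-partition Corollary~\ref{cor:spectrum_subset_nonconsective_circulants} (and you correctly verify the hypothesis $\ell\le\lceil n/2\rceil-1$, which holds iff $(\ell-3)(\ell+1)\ge 0$), and Corollary~\ref{cor:M_equal_Z_field_independence} with $\lambda=0$ for field independence. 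What is new is the replacement of the software computation by the factorization $\lambda_j=4\cos\frac{\pi j}{\ell-1}\cos\frac{\pi j}{\ell+1}$, which exploits $n=(\ell-1)(\ell+1)$; the zero-count $(\ell+1)+(\ell-1)=2\ell$ then follows once the two residue classes $j\equiv\frac{\ell-1}{2}\pmod{\ell-1}$ and $j\equiv\frac{\ell+1}{2}\pmod{\ell+1}$ are seen to be disjoint, and your $2$-adic valuation argument (exactly one of the consecutive even numbers $\ell\mp 1$ is divisible by $4$) settles that cleanly. I checked the counts against $\ell=3$ (where the graph is $K_{4,4}$ with nullity $6$) and $\ell=5$, and they agree. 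The only cosmetic caveats: the conjecture's "odd $\ell$" should be read as $\ell\ge 3$, as you note, and your aside that the nullity vanishes for even $\ell$ is correct but extraneous. In short, the paper's approach buys nothing over yours here; your closed-form spectrum computation upgrades Theorem~\ref{thm:m_equals_z_circ_8_13} to a proof of Conjecture~\ref{conj:m_equals_z_circ} in full generality.
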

\section{An application of equitable decompositions}
\label{sec:applications_of_equitable_decompositions}
In this section, we use the equitable decomposition, introduced in
\cite{MR3573808}, of the adjacency matrix to
establish field independent minimum rank of a graph. The graphs of interest are
the extended cube graphs $ \operatorname{ECG}(6q+1,6q+1) $ where $ q $ is a
nonnegative integer.
\bigskip

An \textit{automorphism} of a graph $ G $ is an isomorphism $ \phi $ from $
V(G) $ to $ V(G) $ such that $ \phi(i) $ is adjacent to $ \phi(j) $ if and only
if $ i $ is adjacent to $ j $. Let $ G  $ be a graph with $ v,u \in V(G) $ and
let $ \phi $ be an automorphism of $ G $. Define the relation $ \approx $ on
the vertices of $ G $ by $ v \approx u$ if and only if there exists a
nonnegative integer $ j $ for which $ v = \phi^j(u) $. This relation is an
equivalence relation on the vertices of $ G $ and the equivalence classes are
the \textit{orbits} of $ \phi $. Let $ \phi $ be an uniform automorphism of $ G
$ with orbit size $ k $ where $ 1 < k $. A \textit{transversal} of $ \phi $ is
a subset of $ V(G) $ containing exactly one vertex from each orbit of $ \phi $.
The $ \ell $-power of transversal $ T $ is defined to be the following
transversal, 
\[
T_\ell = \{ \phi^\ell(v)\, |\, v \in T \} 
\]
for $ \ell \in \{ 0,1,2,\dots,k-1\} $. It is straightforward to see that $
T_\ell $ is a transversal and $ \cup_{\ell = 0}^{k-1} T_\ell = V(G) $.
\bigskip

Given an automorphism $ \phi $, an $ n \times n $ matrix $ A = [a_{ij}] $
associated with the graph $ G $ on $ n $ vertices such that  \[
a_{\phi(i),\phi(j)} = a_{ij} \] for all $ i,j \in \{ 1,2,\dots,n\} $, is called
$ \phi-$\textit{compatible}. An $ n \times n $ matrix $ A $ associated with the
graph $ G $ is called $ \phi-$\textit{automorphism compatible} if it is $
\phi-$compatible for every automorphism $ \phi $ of $ G $. Recently in 2017,
Barrett et al. used equitable partitions of a graph in \cite{MR3573808} to
decompose $ A(G) $.  This decomposition can be used to determine all
eigenvalues of $ A(G) $.  As a result, this decomposition is useful for
determining a lower bound for the maximum nullity.  Moreover, it can be use to
establish a potential candidate for an universally optimal matrix.

\begin{ex}
\label{ex:extende_cube_not_field_indenpendent}
In general, the extended cube graphs do not have field independent minimum rank. Some extended cube graphs are isomorphic to the Cartesian
product of a cycle and a path. For instance, $ \operatorname{ECG}(0,3) $ is
isomorphic to $ C_7 \square P_2 $. It was shown in
\hyperref[ex:generalized_petersen_not_field_independent]{Example
\ref{ex:generalized_petersen_not_field_independent}} that
$\operatorname{mr}(\mathbb{Z}_2, C_7 \square P_2) \neq \operatorname{mr}(C_7
\square P_2) $.
\end{ex}

\begin{obs}
The adjacency matrix of a graph is automorphism compatible.
\end{obs}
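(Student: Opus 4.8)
The plan is to unwind the two definitions and verify the required identity entrywise, splitting on whether the two indices coincide. Fix an arbitrary automorphism $\phi$ of $G$; since a matrix is automorphism compatible precisely when it is $\phi$-compatible for every automorphism $\phi$, it suffices to show that the adjacency matrix $A(G) = [a_{ij}]$ satisfies $a_{\phi(i),\phi(j)} = a_{ij}$ for all $i,j \in V(G)$.

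First I would dispose of the diagonal case. If $i = j$, then $\phi(i) = \phi(j)$, and because the adjacency matrix has zero diagonal ($a_{ii} = 0$ for all $i$), both $a_{\phi(i),\phi(j)}$ and $a_{ij}$ equal $0$, so the identity holds trivially here.

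Next, suppose $i \neq j$. Since $\phi$ is a bijection on $V(G)$, we have $\phi(i) \neq \phi(j)$, so both entries in question are off-diagonal and are therefore determined by adjacency: $a_{kl} = 1$ if $kl \in E(G)$ and $a_{kl} = 0$ otherwise, for $k \neq l$. By the defining property of an automorphism, $\phi(i)$ is adjacent to $\phi(j)$ if and only if $i$ is adjacent to $j$, i.e.\ $\phi(i)\phi(j) \in E(G) \iff ij \in E(G)$. Hence $a_{\phi(i),\phi(j)} = 1$ exactly when $a_{ij} = 1$, and both are $0$ otherwise, giving $a_{\phi(i),\phi(j)} = a_{ij}$.

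Combining the two cases shows $A(G)$ is $\phi$-compatible, and since $\phi$ was an arbitrary automorphism, $A(G)$ is automorphism compatible. There is no genuine obstacle here: the statement is immediate from the definitions. The only point that warrants a moment's care is to treat the diagonal separately, since the automorphism condition speaks only about edges (not loops), so the case $i = j$ must be checked using the zero-diagonal property of $A(G)$ rather than the edge-preservation property of $\phi$.
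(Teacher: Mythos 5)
Your proof is correct and is exactly the direct definition-unwinding argument the paper has in mind when it states this as an observation without proof: split into the diagonal case (handled by the zero diagonal of $A(G)$) and the off-diagonal case (handled by the edge-preservation property of an automorphism together with the bijectivity of $\phi$). Nothing further is needed.
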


The next theorem is stated in \cite{MR3573808} for automorphism compatible
matrices, but as noted there it could be stated for a $ \phi-$compatible matrix
and we do so.
\bigskip

\begin{thm}{\rm\cite[Theorem 3.8]{MR3573808}}
\label{thm:equitable_decomposition_of_graphs}
Let $ G $ be a graph on $ n $ vertices, let $ \phi $ be an uniform
automorphism of $ G $ of orbit size $ k $, let $ \operatorname{T_0} $ be a transversal
of the orbits of $ \phi $, and let $ A $ be an $ \phi-$compatible matrix
in $ \mathcal{S}(G) $. Set $ A_\ell = A[ \operatorname{T}_0, \operatorname{T}_\ell] $, $ \ell = 0,
1, \dots, k-1$, let $ \omega = e^{2\pi i/k} $, and define 
\[
B_j = \sum_{\ell=0}^{k-1} \omega^{j\ell}A_\ell, \quad j=0,1,\dots,k-1. 
\]
Then for some invertible matrix $ S $
\begin{equation}
\label{eq:eq_decom}
S^{-1}AS = B_0 \oplus B_1 \oplus \cdots \oplus B_{k-1}
\end{equation}
and 
\[
\sigma(A) = \sigma(B_0) \cup \sigma(B_1) \cup \cdots \cup \sigma(B_{k-1}).
\] The decomposition in \hyperref[eq:eq_decom]{(\ref{eq:eq_decom})}
is called an equitable decomposition of $ A $. 
\end{thm}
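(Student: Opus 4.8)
The plan is to read $\phi$-compatibility as a commutation relation and diagonalize the underlying permutation first. Let $P$ be the $n\times n$ permutation matrix with $Pe_i = e_{\phi(i)}$, where $e_i$ is the standard basis vector at vertex $i$. Since $\phi$ is uniform of orbit size $k$, it is a product of $m:=|T_0|$ disjoint $k$-cycles, so $n=mk$, $P^k=I$, and $P$ is diagonalizable over $\mathbb{C}$ with eigenvalues $\omega^0,\omega^1,\dots,\omega^{k-1}$, each of multiplicity $m$. The hypothesis $a_{\phi(i),\phi(j)}=a_{ij}$ says precisely that $P^{-1}AP=A$, i.e. $A$ and $P$ commute; hence $A$ leaves each $P$-eigenspace invariant, and it suffices to identify these eigenspaces and the restriction of $A$ to each of them.

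Next I would write down the transversal-adapted eigenbasis of $P$. For $T_0=\{v_1,\dots,v_m\}$, set
\[
u_{r,j} \;=\; \sum_{\ell=0}^{k-1}\omega^{j\ell}\,e_{\phi^{\ell}(v_r)}, \qquad r=1,\dots,m,\quad j=0,\dots,k-1 .
\]
A one-line computation gives $Pu_{r,j}=\omega^{-j}u_{r,j}$, so $E_j:=\operatorname{span}\{u_{1,j},\dots,u_{m,j}\}$ lies in the $\omega^{-j}$-eigenspace of $P$; and since $ke_w=\sum_j\omega^{-j\ell_0}u_{r_0,j}$ whenever $w=\phi^{\ell_0}(v_{r_0})$, the $u_{r,j}$ span $\mathbb{C}^n$, so $\mathbb{C}^n=E_0\oplus\cdots\oplus E_{k-1}$ and the matrix $S$ whose columns are the $u_{r,j}$ (ordered by blocks $j=0,\dots,k-1$, and within each block by $r=1,\dots,m$) is invertible. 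If one wants $S$ unitary, rescale each column by $1/\sqrt{k}$.

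The heart of the argument is the computation of $Au_{r,j}$. Writing $Ae_{\phi^\ell(v_r)}=AP^\ell e_{v_r}=P^\ell A e_{v_r}$, expanding the column $Ae_{v_r}$ over the vertices $\phi^{\ell'}(v_s)$, and using $\phi$-compatibility to rewrite $a_{\phi^{\ell'}(v_s),v_r}=a_{v_s,\phi^{-\ell'}(v_r)}=(A_{-\ell'})_{s,r}$ (reading indices modulo $k$, so $A_\ell:=A[T_0,T_\ell]$ is extended periodically), one collects terms and meets a discrete Fourier sum $\sum_{\ell}\omega^{j\ell}(A_{\ell-\ell''})_{s,r}=\omega^{j\ell''}(B_j)_{s,r}$. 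This yields $Au_{r,j}=\sum_{s=1}^m (B_j)_{s,r}\,u_{s,j}$, so $A$ preserves $E_j$ and its matrix on $E_j$ in the basis $u_{1,j},\dots,u_{m,j}$ is exactly $B_j$. Therefore $S^{-1}AS=B_0\oplus B_1\oplus\cdots\oplus B_{k-1}$, and the spectral statement $\sigma(A)=\sigma(B_0)\cup\cdots\cup\sigma(B_{k-1})$ follows, with multiplicity, from this block-diagonal form.

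I expect the only real friction to be bookkeeping: keeping the convention $A_\ell=A[T_0,T_\ell]$ straight, tracking how each application of $\phi$ slides between the row and column indices (this is the single place where $\phi$-compatibility, not merely that $\phi$ is an automorphism, is used), and managing the $\omega$-exponents through the reindexings $\ell'\mapsto\ell''=\ell'+\ell$ and $p=\ell-\ell''$. A sign convention must be fixed — $u_{r,j}$ sits in the $\omega^{-j}$-eigenspace of $P$, not the $\omega^{j}$-eigenspace — but this is cosmetic, since $j$ runs over all residues modulo $k$ and $\{B_0,\dots,B_{k-1}\}$ is unchanged under $j\mapsto-j$. Finally, although the block decomposition needs only $PA=AP$, I would note that when $A\in\mathcal{S}(G)$ is symmetric one has $A_\ell^{\mathsf{T}}=A_{k-\ell}$, whence each $B_j$ is Hermitian; this is consistent with $\sigma(A)\subseteq\mathbb{R}$ and is what makes the $B_j$ usable as genuine eigenvalue-carrying blocks in the later applications.
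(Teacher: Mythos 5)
Your proof is correct. The paper does not prove this theorem itself --- it imports it from the cited reference --- and your argument (reading $\phi$-compatibility as the commutation $P^{-1}AP=A$ with the orbit permutation matrix, forming the Fourier combinations $u_{r,j}=\sum_{\ell}\omega^{j\ell}e_{\phi^{\ell}(v_r)}$ over each orbit, and checking $Au_{r,j}=\sum_{s}(B_j)_{s,r}u_{s,j}$) is essentially the standard proof given there, with the sign issue you flag genuinely harmless since $j\mapsto -j$ merely permutes the blocks $B_j$.
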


\begin{obs}
\label{obs:bidiakis_cube_automorphism}
Let $ G $ be a extended cube graph $ \operatorname{ECG}(t,t) $ on $ n $ vertices and let $ r =
\frac{n}{4}$. Then the function $ \varphi(x) \equiv x + r \mod n $
is a uniform automorphism for $ G $. The function $ \varphi $ can also be written as a
permutation,
\[
\scalebox{0.8}{
$
\phi = (0,0+r,0+2r,0+3r)(1,1+r,1+2r,1+3r) \cdots (r-1,r-1+r,r-1+2r,r-1+3r).
$
}
\]
Furthermore, $ T_0 = \{0,1,\dots,r-1\} $ is a transversal.
\end{obs}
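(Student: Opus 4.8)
The plan is to exhibit the claimed translation as the lift of a rotational symmetry of the cube graph, and to read off the orbit data by elementary modular arithmetic. First I would record the basic count: $\operatorname{ECG}(t,t)$ has $n = 8 + 4t$ vertices, so $r = n/4 = t+2$ is an integer and $\varphi(x)\equiv x + r \bmod n$ is a well-defined permutation of $V(G) = \{0,1,\dots,n-1\} \cong \ZZ/n\ZZ$. Its order in $\ZZ/n\ZZ$ is $n/\gcd(n,r) = 4$, and neither $\varphi$ nor $\varphi^2$ has a fixed point (that would force $r \equiv 0$ or $2r \equiv 0 \bmod n$, which fails since $0 < r, 2r < n$), so $\langle\varphi\rangle \cong \ZZ/4$ acts freely and every orbit has size exactly $4$. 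Thus, once $\varphi$ is known to be a graph automorphism, it is uniform of orbit size $k = 4$; the orbit of $i$ is $\{i, i+r, i+2r, i+3r\}$, and as $i$ ranges over $0,1,\dots,r-1$ these are the $r$ distinct orbits, which is exactly the displayed cycle decomposition. Finally $T_0 = \{0,1,\dots,r-1\}$ meets each orbit in precisely one vertex, so it is a transversal.

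The substantive point is that $\varphi$ preserves adjacency. Here I would use the rotation $\psi\colon x\mapsto x+2 \bmod 8$, which is an automorphism of the cube graph $Q_3$ (it permutes the $8$-cycle edges among themselves and cyclically permutes the four diagonals $\{0,5\},\{1,4\},\{2,7\},\{3,6\}$). Crucially, $\psi$ cyclically permutes the four edges subdivided in the construction of $\operatorname{ECG}(t,k)$, namely $\{0,1\}\to\{2,3\}\to\{4,5\}\to\{6,7\}\to\{0,1\}$, sending each vertical-insertion edge to a horizontal-insertion edge and conversely, and it carries the vertical pair $(\{0,1\},\{4,5\})$ onto the horizontal pair $(\{2,3\},\{6,7\})$. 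Consequently, when the two insertions use the same parameter $t$ (so that the subdivided paths have equal length), $\psi$ lifts to an automorphism $\widetilde\psi$ of $\operatorname{ECG}(t,t)$: it carries the $i$-th subdivision vertex of one edge to the $i$-th subdivision vertex of its image, carries each vertical rung to the corresponding horizontal rung and each horizontal rung back to a vertical rung, and acts as $\psi$ on the surviving original vertices and diagonals. Traversing the Hamilton cycle of $\operatorname{ECG}(t,t)$ one checks that $\widetilde\psi$ advances every vertex by one quarter-turn, i.e.\ by $r = n/4$ positions along the cycle, so under the cyclic relabeling from Definition \ref{defn:extended_general_bidiakis_cube} the automorphism $\widetilde\psi$ is exactly $\varphi\colon x\mapsto x+r$. (Alternatively, one can avoid the lifting language and verify adjacency directly: partition the labels into the four consecutive blocks $A_j = \{jr,\dots,jr+r-1\}$, $j=0,1,2,3$, identified by the endpoint description preceding Observation \ref{obs:zero_forcing_insertion_minor}, write the $r$ vertical rungs as explicit pairs joining $A_0$ and $A_2$ and the $r$ horizontal rungs as explicit pairs joining $A_1$ and $A_3$, and check that adding $r$ to every coordinate interchanges the two families while obviously permuting the cycle edges.)

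The main obstacle is purely bookkeeping. Definition \ref{defn:extended_general_bidiakis_cube} specifies the relabeling only as ``a relabeling around the cycle,'' so the proof must fix one concrete such labeling and keep track of exactly which original and subdivision vertices fall in which block; in particular, the four original cube diagonals survive as rung edges (two of the $r$ vertical rungs and two of the $r$ horizontal rungs) and must be accounted for alongside the $t$ newly inserted rungs in each insertion. Once the labeling is pinned down, the identification $\widetilde\psi = \varphi$ — equivalently, the equivariance $\varphi(\text{vertical rung}) = \text{horizontal rung}$ and $\varphi(\text{horizontal rung}) = \text{vertical rung}$ — is a one-line check, and combined with the orbit count of the first paragraph the observation follows.
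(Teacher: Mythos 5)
Your proposal is correct. Note that the paper states this as an \emph{Observation} and supplies no proof at all, so there is no argument of the author's to compare against; what you have written is a legitimate filling-in of the claim. The genuinely substantive point is, as you identify, that $\varphi$ preserves adjacency, and both of your routes to it work. The lifting route is clean: $x\mapsto x+2 \bmod 8$ is an automorphism of $Q_3$ that cyclically permutes the four subdivided edges $\{0,1\}\to\{2,3\}\to\{4,5\}\to\{6,7\}$, and it lifts to $\operatorname{ECG}(t,k)$ precisely when the vertical and horizontal insertions have equal length, i.e.\ $t=k$ — this also explains why the observation is stated only for $\operatorname{ECG}(t,t)$. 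The direct route is what the paper's own labeling paragraph (just before Observation \ref{obs:zero_forcing_insertion_minor}) sets up: for $t=k$ the four endpoint blocks are the consecutive intervals $A_j=\{jr,\dots,jr+r-1\}$ of equal size $r=t+2$, the $r$ vertical rungs (the $t$ inserted rungs together with the two surviving diagonals $\{0,1\cdot\}$-side ones) join $A_0$ to $A_2$ in reversed order, the $r$ horizontal rungs join $A_1$ to $A_3$ likewise, and translation by $r$ visibly exchanges the two rung families while fixing the cycle edges setwise; this is exactly the structure visible in the $A_1,A_2,A_3$ blocks of Example \ref{ex:bidiakis_cube}. Your orbit count is also right: $n=4r$ gives $\gcd(n,r)=r$, so $\varphi$ has order $4$ and acts freely, every orbit is $\{i,i+r,i+2r,i+3r\}$, and $\{0,\dots,r-1\}$ meets each orbit exactly once. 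The only caveat, which you flag yourself, is that Definition \ref{defn:extended_general_bidiakis_cube} leaves the cyclic relabeling underspecified, so a fully formal write-up must commit to the labeling the paper describes; with that fixed, nothing is missing.
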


\begin{ex}
\label{ex:bidiakis_cube}
The following is an example of constructing the eigenvalues of $ \operatorname{ECG}(1,1) $ using an equitable decomposition. As in
\hyperref[obs:bidiakis_cube_automorphism]{Observation
\ref{obs:bidiakis_cube_automorphism}}, 

\[
\varphi(x) \equiv x + 3 \mod 12, 
\]
is an automorphism with permutation representation $
\phi=(0,3,6,9)(1,4,7,10)(2,5,8,11) $, and the transversals are $ T_0 =
\{0,1,2\} $, $ T_1 = \{3,4,5\} $, $ T_2 = \{6,7,8\} $, $ T_3 = \{9,10,11\} $.
Let

\[
\scalebox{0.8}{
$
A_0 = 
\begin{blockarray}{cccc}
 & \phi^0(0)=0 & \phi^0(1)=1 & \phi^0(2)=2 \\
\begin{block}{c(ccc)}
0 & 0 & 1 & 0  \\
1 & 1 & 0 & 1  \\
2 & 0 & 1 & 0  \\
\end{block}
\end{blockarray} \quad ,
\quad
A_1 =
\begin{blockarray}{cccc}
 & \phi^1(0)=3 & \phi^1(1)=4 & \phi^1(2)=5 \\
\begin{block}{c(ccc)}
0 & 0 & 0 & 0  \\
1 & 0 & 0 & 0  \\
2 & 1 & 0 & 0  \\
\end{block}
\end{blockarray}\quad ,
$
}
 \]
\[
\scalebox{0.8}{
$
A_2 = 
\begin{blockarray}{cccc}
 & \phi^2(0)=6 & \phi^2(1)=7 & \phi^2(2)=8 \\
\begin{block}{c(ccc)}
0 & 0 & 0 & 1  \\
1 & 0 & 1 & 0  \\
2 & 1 & 0 & 0  \\
\end{block}
\end{blockarray}\quad ,
\quad
\text{ and }
\quad
A_3 = 
\begin{blockarray}{cccc}
 & \phi^3(0)=9 & \phi^3(1)=10 & \phi^3(2)=11 \\
\begin{block}{c(ccc)}
0 & 0 & 0 & 1  \\
1 & 0 & 0 & 0  \\
2 & 0 & 0 & 0  \\
\end{block}
\end{blockarray} \,.
$
}
 \]

Hence

\[
B_0 = A_0 + A_1 + A_2 + A_3 =
\begin{pmatrix}
0 & 1 & 2 \\
1 & 1 & 1 \\
2 & 1 & 0 \\
\end{pmatrix}
\]
and it follows that the spectrum of $ B_0 $ is $ \{ 3,0,-2 \}$ with eigenvector $ x_0 = [1,-2,1]^T $
corresponding to the eigenvalue $ 0 $. Also,
\[
B_1 = A_0 + iA_1 - A_2 - iA_3 =
\begin{pmatrix}
0 & 1 & -1-i \\
1 & -1 & 1 \\
-1+i & 1 & 0 \\
\end{pmatrix}
\]
and the spectrum of $ B_1 $ is approximately $\{ 1.561552,0,-2.561552 \}$ with eigenvector $ x_1 =
[i,1+i,1]^T $
corresponding to the eigenvalue $ 0 $,
\[
B_2 = A_0 - A_1 + A_2 - A_3 =
\begin{pmatrix}
0 & 1 & 0 \\
1 & 1 & 1 \\
0 & 1 & 0 \\
\end{pmatrix}
\]
has spectrum $ \{2,0,-1\} $ with eigenvector $ x_2 = [1,0,-1]^T $
corresponding to the eigenvalue $ 0 $, and
\[
B_3 = A_0 - iA_1 - A_2 + iA_3 =
\begin{pmatrix}
0 & 1 & -1+i \\
1 & 1 & 1 \\
-1-i & 1 & 0 \\
\end{pmatrix}
\]
has spectrum approximately $ \{1.561552,0,-2.561552\} $ with
eigenvector
$ x_3 =
[-1,-1-i,i]^T $
corresponding to the eigenvalue $ 0 $. Using SageMath (see \cite{ecgsage}), we compute
the eigenvalues of $ \operatorname{ECG}(1,1) $ to be approximately 
\[
\{
3,2,1.561552,1.561552,0,0,0,0,-1,-2,-2.561552,-2.561552\}
\]
which is the union of the spectra of $ B_0,B_1,B_2,B_3 $.

\end{ex}

\begin{thm}
\label{thm:generalized_bidiakis_cube_m_z}
Let $ G $ be a extended cube graph $\operatorname{ECG}(6q+1,6q+1) $ for some
nonnegative integer $ q $.  Then $ G $ has field independent minimum rank and $ A(G) $ is a universally optimal matrix.
\end{thm}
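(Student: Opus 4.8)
The plan is to apply Corollary~\ref{cor:M_equal_Z_field_independence} with $\lambda=0$, so it suffices to prove $\operatorname{null}(A(G))=\operatorname{Z}(G)$. By Corollary~\ref{cor:extended_bidiakis_max_nullity_zero_forcing} we have $\operatorname{Z}(G)=\operatorname{M}(G)=4$, and by Proposition~\ref{prop:M_is_less_than_Z_over_any_field}, $\operatorname{null}(A(G))\le\operatorname{M}(G)=4$; thus the whole statement reduces to showing $\operatorname{null}(A(G))\ge 4$.

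For this I will use the equitable decomposition of $A(G)$ coming from the uniform automorphism $\varphi(x)\equiv x+r\bmod n$ of $G=\operatorname{ECG}(6q+1,6q+1)$ given in Observation~\ref{obs:bidiakis_cube_automorphism}, where $n=|G|=12(2q+1)$ and $r=n/4=6q+3$; this $\varphi$ has orbit size $4$ and transversal $T_0=\{0,1,\dots,r-1\}$. Since $A(G)\in\mathcal{S}(G)$ is $\varphi$-compatible, Theorem~\ref{thm:equitable_decomposition_of_graphs} shows $A(G)$ is similar to $B_0\oplus B_1\oplus B_2\oplus B_3$ where, with $\omega=i$ and $A_\ell=A(G)[T_0,T_\ell]$, $B_j=A_0+\omega^{j}A_1+\omega^{2j}A_2+\omega^{3j}A_3$; hence $\operatorname{null}(A(G))=\sum_{j=0}^{3}\operatorname{null}(B_j)$. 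The first concrete step is to read off the four $r\times r$ matrices $A_\ell$ from the labelling of $\operatorname{ECG}(6q+1,6q+1)$ recalled after Definition~\ref{defn:extended_general_bidiakis_cube}: the transversal $T_0$ is exactly the path of top endpoints of the vertical edges, so $A_0=A(P_r)$; the vertical edges join $T_0$ to $T_2$, but the bottom endpoints occur in the reversed cyclic order, so $A_2=J_r$ is the $r\times r$ exchange (anti-diagonal) matrix; and the four cube edges joining the two ladder parts form a single $\varphi$-orbit of the cycle edges $\{r-1,r\},\{2r-1,2r\},\{3r-1,3r\},\{n-1,0\}$, so there is exactly one edge between each pair of cyclically consecutive transversals, giving $A_1=E_{r,1}$ and $A_3=E_{1,r}=A_1^{T}$. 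For $q=0$ these coincide with the matrices in Example~\ref{ex:bidiakis_cube}.

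The heart of the proof is to exhibit a nonzero vector in $\ker B_j$ for each $j$. Written out, $B_jv=0$ has interior rows $2\le i\le r-1$ reading $v_{i-1}+v_{i+1}+(-1)^{j}v_{r+1-i}=0$, plus two boundary rows carrying the corner contributions of $A_1$ and $A_3$. Splitting $v$ into its symmetric and antisymmetric parts under $i\mapsto r+1-i$ turns the interior relations into constant-coefficient second-order recurrences whose characteristic polynomials are $x^{2}\pm x+1$; the roots are primitive third or sixth roots of unity, so the relevant components are periodic of period $3$ or $6$, and the choice $r=6q+3$ (an odd multiple of $3$) is exactly what keeps these periodic patterns consistent with the symmetry and with the two boundary rows. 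Concretely, I claim $B_0$ is killed by the period-$3$ vector with $v_i=-2$ when $i\equiv 2\pmod 3$ and $v_i=1$ otherwise; $B_2$ by the period-$6$ vector with repeating block $(1,0,-1,-1,0,1)$; $B_1$ by the period-$6$ vector with repeating block $(1+i,\,2,\,1-i,\,-1+i,\,-2,\,-1-i)$; and $B_3=\overline{B_1}$ by the complex conjugate of the last. Checking each claim is a direct substitution — three (resp.\ six) consecutive entries satisfy the recurrence from the interior rows, and the entries at positions $1,2,r-1,r$ (whose residues modulo $6$ are determined by $r=6q+3$) make the two boundary rows vanish — and this substitution works uniformly in $q\ge 0$, so it also disposes of the base case $q=0$. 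It follows that $\operatorname{null}(B_j)\ge 1$ for each $j$, hence $\operatorname{null}(A(G))\ge 4$, so $\operatorname{null}(A(G))=4=\operatorname{Z}(G)$, and Corollary~\ref{cor:M_equal_Z_field_independence} then gives that $G$ has field independent minimum rank and that $A(G)$ is universally optimal.

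I expect the main obstacle to be the combinatorial identification of $A_1,A_2,A_3$ — in particular verifying that the rungs of the ladders give the exchange matrix $J_r$ rather than some other matching and that precisely one edge runs between each pair of consecutive transversals — together with the bookkeeping needed to check the complex null vectors of $B_1$ and $B_3$ against the two corner rows; once the $A_\ell$ are pinned down, everything else is the routine periodic-pattern verification described above.
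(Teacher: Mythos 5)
Your proposal is correct and follows essentially the same route as the paper: the same equitable decomposition of $A(G)$ via the uniform automorphism $\varphi(x)\equiv x+r \bmod n$ of Observation~\ref{obs:bidiakis_cube_automorphism}, the same reduction to exhibiting one kernel vector for each of $B_0,B_1,B_2,B_3$, and the same appeal to Corollary~\ref{cor:M_equal_Z_field_independence}. Your identification $A_0=A(P_r)$, $A_2=J_r$ (exchange matrix), $A_1=E_{r,1}=A_3^{T}$ matches the paper's block description built from the $\operatorname{ECG}(1,1)$ blocks $\tilde{A}_\ell$, and your periodic null vectors are (up to scalar, e.g.\ $(1+i,2,1-i,\dots)=(1-i)\cdot(i,1+i,1,\dots)$) exactly the paper's concatenated vectors $x_0,x_1,x_2$; all four substitution checks, including the two corner rows with $r=6q+3$, go through.
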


\begin{proof}
\label{proof:generalized_bidiakis_cube_m_z}
First we will show that the adjacency matrix of each such extended cube graph
has nullity at least 4. Hence by
\hyperref[cor:extended_bidiakis_max_nullity_zero_forcing]{Corollary
\ref{cor:extended_bidiakis_max_nullity_zero_forcing}} the adjacency matrix
realizes the maximum nullity.
\bigskip

It was shown in  \hyperref[ex:bidiakis_cube]{Example \ref{ex:bidiakis_cube}}
that the nullity of $ \operatorname{ECG}(1,1) $ has nullity
equal to 4, so we assume $ q > 0 $. Let $ G $ be a extended cube graph $
\operatorname{ECG}(6q+1,6q+1) $ and let $ n  $ be the number of vertices
of $ G $. Consider the uniform automorphism

\[
\varphi(x) = x + r \mod n
\]
where $ r = \frac{n}{4} $ given by \hyperref[obs:bidiakis_cube_automorphism]{Observation
\ref{obs:bidiakis_cube_automorphism}}. By \hyperref[thm:equitable_decomposition_of_graphs]{Theorem
\ref{thm:equitable_decomposition_of_graphs}}, $ G $ has the
following spectrum 
\[
\operatorname{spec}(A(G)) = \operatorname{spec}(B_0) \cup \operatorname{spec}(B_1) \cup
\operatorname{spec}(B_2) \cup \operatorname{spec}(B_3)
\]
for the matrices $ B_i $ corresponding to $ \varphi $. We show that $
B_0,B_1,B_2,B_3 $ each have nullity at least 1, which implies $ A(G) $ has
nullity at least 4.
\bigskip

\newcommand{\0}{0}
\newcommand{\A}{\tilde{A}}
\newcommand{\xb}{\tilde{x}}
\newcommand{\xh}{\hat{x}}
\newcommand{\bdots}{\reflectbox{$\ddots$}}

The transversals with respect to $ \varphi $ are $ T_0 = \{0,1,2,\dots,r-1\} $,
$ T_1 = \{r,r+1,r+2,\dots,2r-1\} $, $ T_2 = \{2r,2r+1,2r+2,\dots,3r-1\} $, $
T_3 = \{3r,3r+1,3r+2,\dots,4r-1\} $. Hence $ k=4 $ and $ \omega = e^{2\pi i/4}
= i $.  For the graph $ \operatorname{ECG}(1,1) $, let $ \A_0,\A_1,\A_2,\A_3 $ be the
corresponding  matrices used in
\hyperref[thm:equitable_decomposition_of_graphs]{Theorem
\ref{thm:equitable_decomposition_of_graphs}} to construct $ \tilde{B}_0,
\tilde{B}_1, \tilde{B}_2, \tilde{B}_3 $ such that 
\[
\operatorname{spec}(A( \operatorname{ECG}(1,1))) =
\operatorname{spec}(\tilde{B}_0) \cup \operatorname{spec}(\tilde{B}_1) \cup
\operatorname{spec}(\tilde{B}_2) \cup \operatorname{spec}(\tilde{B}_3)
\]
and
$ \tilde{B}_0 = \A_0 + \A_1 + \A_2 + \A_3 $. Also, let $ \xb_0, \xb_1, \xb_2 $
be the eigenvectors of $ \tilde{B}_0,\tilde{B}_1, \tilde{B}_2 $ respectively,
corresponding to the eigenvalue 0. It follows that $ A_0,A_1,A_2,A_3 $ are the
matrices
\bigskip
\begin{center}
\scalebox{0.9}{
\begin{tikzpicture}
\node at (-6.5,0) {$A_0 = 
\begin{pmatrix}
\tilde{A}_0 & \tilde{A}_1  & 0 & 0 & 0 & \cdots &
0 \\
\tilde{A}_3  & \tilde{A}_0 & \tilde{A}_1 & 0 & 0 & \cdots &
0 \\
0 & \tilde{A}_3 & \tilde{A}_0 & \tilde{A}_1 & 0 & \cdots &
0 \\
\vdots & \vdots & \ddots & \ddots & \ddots & \vdots & \vdots \\
0 & \cdots & 0 & \tilde{A}_3 & \tilde{A}_0 & \tilde{A}_1 &
0  \\
0 & \cdots & 0 & 0 & \tilde{A}_3 & \tilde{A}_0 & \tilde{A}_1  \\
0 & \cdots & 0 & 0 & 0 & \tilde{A}_3 &
\tilde{A}_0   \\
\end{pmatrix}
,$};
\node at (0,0) {$ $};
\end{tikzpicture}
}
\end{center}
\bigskip
\begin{center}
\scalebox{0.9}{
\begin{tikzpicture}
\node at (-7,0) {$ A_1 = 
\begin{pmatrix}
0 & 0  & 0 & 0 & 0 & \cdots &
0 \\
0  & 0 & 0 & 0 & 0 & \cdots &
0 \\
0 & 0 & 0 & 0 & 0 & \cdots &
0 \\
\vdots & \vdots & \vdots & \ddots & \vdots & \vdots & \vdots \\
0 & 0 & 0 & 0 & 0 & \cdots &
0  \\
0 & 0 & 0 & 0 & 0 & \cdots &
0  \\
\tilde{A}_1 & 0 & 0 & 0 & 0 & \cdots & 0   \\
\end{pmatrix},
$};
\node at (0,0) {$ $};
\end{tikzpicture}
}
\end{center}
\bigskip

\begin{center}
\scalebox{0.9}{
\begin{tikzpicture}
\node at (0,0) {$A_2 = 
\begin{pmatrix}
0 & 0  & 0 & \cdots & 0 & 0 &
\tilde{A}_2 \\
0  & 0 & 0 & \cdots & 0 & \tilde{A}_2 &
0 \\
0 & 0 & 0 & \cdots & \tilde{A}_2 & 0 &
0 \\
\vdots & \vdots & \vdots & \reflectbox{$\ddots$} & \vdots & \vdots & \vdots \\
0 & 0 & \tilde{A}_2 & \cdots & 0 & 0 &
0  \\
0 & \tilde{A}_2 & 0 & \cdots & 0 & 0 &
0  \\
\tilde{A}_2 & 0 & 0 & \cdots & 0 & 0 & 0   \\
\end{pmatrix},
$};
\node at (4.3,0) {and};
\node at (5.7,0) {$A_3 = A_1^T$ .};
\end{tikzpicture}
}
\end{center}

By definition,
\begin{equation}
\label{eq: bj_matrix}
B_j = i^{0j}A_0 + i^jA_1 + i^{2j}A_2 + i^{3j}A_3
= A_0 + i^jA_1 + (-1)^{j}A_2 + i^{3j}A_3
\end{equation} 
for $j = 0, 1, 2, 3 $, so the following matrices are constructed
\begin{align*}
 B_0 &= A_0 + A_1 + A_2 + A_3 
 &&B_1 = A_0 + iA_1 - A_2 - iA_3 \\
 B_2 &= A_0 - A_1 + A_2 - A_3 
 &&B_3 = A_0 - iA_1 - A_2 + iA_3.\\
\end{align*}
Writing $ B_j $ in terms of the matrices $ \A_0, \A_1, \A_2, \A_3 $ we get the following matrix

\[
\scalebox{1}{
$
\left(
\begin{smallmatrix}
\A_0 & \A_1  & \0 & \0 & \0 & \0 & \cdots & \0 & \0 & \0 & (-1)^j\A_2 + i^{3j}\A_3 \\
\A_3 & \A_0  & \A_1 & \0 & \0 & \0 & \cdots & \0 & \0 & (-1)^j\A_2 & \0 \\
\0 & \A_3  & \A_0 & \A_1 & \0 & \0 & \cdots & \0 & (-1)^j\A_2 & \0 & \0 \\
\vdots &  & \ddots & \ddots & \ddots &  &  & \bdots & & &\vdots\\
\0 & \cdots  & \0 & \A_3 & \A_0 & \A_1 & (-1)^j\A_2 & \0 & \0 & \cdots & \0 \\
\0 & \cdots  & \0 & \0 & \A_3 & \A_0 + (-1)^j\A_2 & i^{0j}\A_1  & \0 & \0 & \cdots & \0 \\
\0 & \cdots  & \0 & \0 & (-1)^j\A_2 & \A_3 & \A_0 & \A_1 & \0 & \cdots & \0 \\
\vdots &  &  & \bdots &  &  & \ddots & \ddots & \ddots & & \vdots\\
\0 & \0  & (-1)^j\A_2 & \0 & \cdots & \0 & \0 & \A_3 & \A_0 & \A_1 & \0 \\
\0 & (-1)^j\A_2  & \0 & \0 & \cdots & \0 & \0 & \0 & \A_3 & \A_0 & \A_1 \\
i^j\A_1+(-1)^j\A_2 & \0  & \0 & \0 & \cdots & \0 & \0 & \0 & \0 & \A_3 & \A_0 \\
\end{smallmatrix}
\right).
$
}
\]

For simplicity of notation let $ \xh_1 = -\A_2\xb_1 $ and $ \xh_2 = -\xb_2 $. We show that 
$ x_0 = \bigoplus_{m=1}^{2q+1} \tilde{x}_0 $, 
$ x_1 = \bigoplus_{m=1}^{q}  (\tilde{x}_1 \oplus \hat{x}_1) \oplus \tilde{x}_1
$, and
$ x_2 = \bigoplus_{m=1}^{q}  (\tilde{x}_2 \oplus \hat{x}_2) \oplus \tilde{x}_2 $, 
are eigenvectors corresponding to eigenvalue 0 for $ B_0 $, $ B_1 $, and $ B_2 $
respectively. Since $ B_3 = B_1^T $ it follows that $ B_3 $ also has a
zero eigenvalue so we omit showing that $ B_3 $ has an eigenvalue of zero.
Observe that

\[
B_0 =
\left(
\begin{smallmatrix}
\A_0 & \A_1  & \0 & \0 & \0 & \0 & \cdots & \0 & \0 & \0 & \A_2 + \A_3 \\
\A_3 & \A_0  & \A_1 & \0 & \0 & \0 & \cdots & \0 & \0 & \A_2 & \0 \\
\0 & \A_3  & \A_0 & \A_1 & \0 & \0 & \cdots & \0 & \A_2 & \0 & \0 \\
\vdots &  & \ddots & \ddots & \ddots &  &  & \bdots & & &\vdots\\
\0 & \cdots  & \0 & \A_3 & \A_0 & \A_1 & \A_2 & \0 & \0 & \cdots & \0 \\
\0 & \cdots  & \0 & \0 & \A_3 & \A_0 + \A_2 & \A_1  & \0 & \0 & \cdots & \0 \\
\0 & \cdots  & \0 & \0 & \A_2 & \A_3 & \A_0 & \A_1 & \0 & \cdots & \0 \\
\vdots &  &  & \bdots &  &  & \ddots & \ddots & \ddots & & \vdots\\
\0 & \0  & \A_2 & \0 & \cdots & \0 & \0 & \A_3 & \A_0 & \A_1 & \0 \\
\0 & \A_2  & \0 & \0 & \cdots & \0 & \0 & \0 & \A_3 & \A_0 & \A_1 \\
\A_1+\A_2 & \0  & \0 & \0 & \cdots & \0 & \0 & \0 & \0 & \A_3 & \A_0 \\
\end{smallmatrix}
\right).
\]
The product $ B_0x_0 $ reduces down to the following vector
\[
\begin{pmatrix}
\A_0\xb_0 + \A_1\xb_0  + \A_2\xb_0 + \A_3\xb_0 \\
\A_0\xb_0 + \A_1\xb_0  + \A_2\xb_0 + \A_3\xb_0 \\
\vdots\\
\A_0\xb_0 + \A_1\xb_0  + \A_2\xb_0 + \A_3\xb_0 \\
\A_0\xb_0 + \A_1\xb_0  + \A_2\xb_0 + \A_3\xb_0 \\
\end{pmatrix}
=
\begin{pmatrix}
\tilde{B}_0\xb_0\\
\tilde{B}_0\xb_0\\
\vdots\\
\tilde{B}_0\xb_0\\
\tilde{B}_0\xb_0\\
\end{pmatrix}
=
0,
\]
since $ \tilde{B}_0 = \tilde{A}_0 + \tilde{A}_1 + \tilde{A}_2 + \tilde{A}_3 $. 
\bigskip

To compute $ B_1x_1 $ consider the fact that $ \tilde{x}_1 = [i,1+i,1]^T $ is an
eigenvector for $ \tilde{B}_1 $. So by definition, $ \hat{x}_1 = [-1,-1-i,-i]^T $
and 
\[
\tilde{A}_0\hat{x}_1 =
\begin{pmatrix}
0 & 1 & 0 \\
1 & 0 & 1 \\
0 & 1 & 0 
\end{pmatrix}
\begin{pmatrix}
-1  \\
-1-i  \\
-i  
\end{pmatrix}
=-A_0\tilde{x}_1
\]
\[
\tilde{A}_1\hat{x}_1 =
\begin{pmatrix}
0 & 0 & 0 \\
0 & 0 & 0 \\
1 & 0 & 0 
\end{pmatrix}
\begin{pmatrix}
-1  \\
-1-i  \\
-i  
\end{pmatrix}
=
\begin{pmatrix}
0  \\
0  \\
-1  
\end{pmatrix}
=
i\begin{pmatrix}
0 & 0 & 0 \\
0 & 0 & 0 \\
1 & 0 & 0 
\end{pmatrix}
\begin{pmatrix}
i  \\
1+i  \\
1  
\end{pmatrix}
=i\tilde{A}_1\tilde{x}_1.
\]
Since $ \tilde{A}_2^2 = I $ it follows that $ \tilde{A}_2\hat{x}_1 = -\tilde{x}_1 $.
Also,
\[
\tilde{A}_3\hat{x}_1 =
\begin{pmatrix}
0 & 0 & 1 \\
0 & 0 & 0 \\
0 & 0 & 0 
\end{pmatrix}
\begin{pmatrix}
-1  \\
-1-i  \\
-i  
\end{pmatrix}
=
\begin{pmatrix}
-i  \\
0  \\
0  
\end{pmatrix}
=
-i\begin{pmatrix}
0 & 0 & 1 \\
0 & 0 & 0 \\
0 & 0 & 0 
\end{pmatrix}
\begin{pmatrix}
i  \\
1+i  \\
1  
\end{pmatrix}
=-i\tilde{A}_3\tilde{x}_1.
\]

In other words, 
\[
\tilde{A}_0\xh_1 = (-1-i)\mathbf{1}^T \, ,  \A_1\xh_1 =
i\A_1\xb_1 \, ,  \A_2\xh_1 = -\xb_1 \, , \text{ and }  \A_3\xh_1 = -i\A_3\xb_1
\]
and these values are used to reduce the entries of the next product. We have
that $ B_1x_1 $ is
\[
\left(
\begin{smallmatrix}
\A_0 & \A_1  & \0 & \0 & \0 & \0 & \cdots & \0 & \0 & \0 & -\A_2 - i\A_3 \\
\A_3 & \A_0  & \A_1 & \0 & \0 & \0 & \cdots & \0 & \0 & -\A_2 & \0 \\
\0 & \A_3  & \A_0 & \A_1 & \0 & \0 & \cdots & \0 & -\A_2 & \0 & \0 \\
\vdots &  & \ddots & \ddots & \ddots &  &  & \bdots & & &\vdots\\
\0 & \cdots  & \0 & \A_3 & \A_0 & \A_1 & -\A_2 & \0 & \0 & \cdots & \0 \\
\0 & \cdots  & \0 & \0 & \A_3 & \A_0 + -\A_2 & \A_1  & \0 & \0 & \cdots & \0 \\
\0 & \cdots  & \0 & \0 & -\A_2 & \A_3 & \A_0 & \A_1 & \0 & \cdots & \0 \\
\vdots &  &  & \bdots &  &  & \ddots & \ddots & \ddots & & \vdots\\
\0 & \0  & -\A_2 & \0 & \cdots & \0 & \0 & \A_3 & \A_0 & \A_1 & \0 \\
\0 & -\A_2  & \0 & \0 & \cdots & \0 & \0 & \0 & \A_3 & \A_0 & \A_1 \\
i\A_1+-\A_2 & \0  & \0 & \0 & \cdots & \0 & \0 & \0 & \0 & \A_3 & \A_0 \\
\end{smallmatrix}
\right)
x_1.
\]
We show that the product $ B_1x_1 $ is given by the vector,
\[
\begin{pmatrix}
\A_0\xb_1 + \A_1\xh_1  + (-\A_1 - i\A_3)\xb_1 \\
--------------- \\
\A_3\xb_1 + \A_0\xh_1  + \A_1\xb_1 + (-\A_2\xh_1) \\
\A_3\xh_1 + \A_0\xb_1  + \A_1\xh_1 + (-\A_2\xb_1) \\
\A_3\xb_1 + \A_0\xh_1  + \A_1\xb_1 + (-\A_2\xh_1) \\
\A_3\xh_1 + \A_0\xb_1  + \A_1\xh_1 + (-\A_2\xb_1) \\
\vdots\\
\A_3\xh_1 + \A_0\xb_1  + \A_1\xh_1 + (-\A_2\xb_1) \\
\A_3\xb_1 + \A_0\xh_1  + \A_1\xb_1 + (-\A_2\xh_1) \\
--------------- \\
\A_3\xh_1 + (\A_0 - \A_2)\xb_1 + \A_1\xh_1 \\
--------------- \\
\A_3\xh_1 + \A_0\xb_1  + \A_1\xh_1 + (-\A_2\xb_1) \\
\A_3\xb_1 + \A_0\xh_1  + \A_1\xb_1 + (-\A_2\xh_1) \\
\vdots\\
\A_3\xb_1 + \A_0\xh_1  + \A_1\xb_1 + (-\A_2\xh_1) \\
\A_3\xh_1 + \A_0\xb_1  + \A_1\xh_1 + (-\A_2\xb_1) \\
--------------- \\
(i\A_1 - \A_2)\xb_1 + \A_3\xh_1  + \A_0\xb_1
\end{pmatrix}
= 0
\]
which implies that both matrices $ B_1 $ and $ B_3 $ has a zero eigenvalue.
Note the that each entry in the product takes on one of the following
values,
\begin{align*}
\A_0\xb_1+\A_1\xh_1+(-\A_2-i\A_3)\xb_1&=\A_0\xb_1+i\A_1\xb_1-\A_2\xb_1-i\A_3\xb_1
\nonumber\\
&=(\A_0+i\A_1-\A_2-i\A_3)\xb_1 =\tilde{B}_1\tilde{x}_1 = 0 \nonumber\\
\A_3\xb_1 + \A_0\xh_1  + \A_1\xb_1 +
(-\A_2\xh_1)&= (1,0,0)^T + (-1-i,-1-i,-1-i)^T \nonumber \\
&\quad +(0,0,i)^T +(i,1+i,1)^T  = 0 \nonumber\\
\A_3\xh_1 + \A_0\xb_1  + \A_1\xh_1 +
(-\A_2\xb_1)&=-i\A_3\xb_1+\A_0\xb_1+i\A_1\xb_1-\A_2\xb_1\\
&=(\A_0+i\A_1-\A_2-i\A_3)\xb_1 =\tilde{B}_1\tilde{x}_1 = 0 \nonumber\\
\A_3\xh_1 + (\A_0 - \A_2)\xb_1 + \A_1\xh_1 &= 0 \quad \text{ by }
\hyperref[eq: bj_matrix]{(\ref{eq: bj_matrix})}  \nonumber\\
(i\A_1 - \A_2)\xb_1 + \A_3\xh_1  + \A_0\xb_1 &= i\A_1\xb_1 - \A_2\xb_1 -
i\A_3\xb_1 + \A_0\xb_1 \nonumber \\
&=(\A_0+i\A_1-\A_2-i\A_3)\xb_1 =\tilde{B}_1\tilde{x}_1 = 0. \nonumber
\end{align*}
Finally, we compute $ B_2x_2 $,

\[
\left(
\begin{smallmatrix}
\A_0 & \A_1  & \0 & \0 & \0 & \0 & \cdots & \0 & \0 & \0 & \A_2 - \A_3 \\
\A_3 & \A_0  & \A_1 & \0 & \0 & \0 & \cdots & \0 & \0 & \A_2 & \0 \\
\0 & \A_3  & \A_0 & \A_1 & \0 & \0 & \cdots & \0 & \A_2 & \0 & \0 \\
\vdots &  & \ddots & \ddots & \ddots &  &  & \bdots & & &\vdots\\
\0 & \cdots  & \0 & \A_3 & \A_0 & \A_1 & \A_2 & \0 & \0 & \cdots & \0 \\
\0 & \cdots  & \0 & \0 & \A_3 & \A_0 + \A_2 & \A_1  & \0 & \0 & \cdots & \0 \\
\0 & \cdots  & \0 & \0 & \A_2 & \A_3 & \A_0 & \A_1 & \0 & \cdots & \0 \\
\vdots &  &  & \bdots &  &  & \ddots & \ddots & \ddots & & \vdots\\
\0 & \0  & \A_2 & \0 & \cdots & \0 & \0 & \A_3 & \A_0 & \A_1 & \0 \\
\0 & \A_2  & \0 & \0 & \cdots & \0 & \0 & \0 & \A_3 & \A_0 & \A_1 \\
-\A_1+\A_2 & \0  & \0 & \0 & \cdots & \0 & \0 & \0 & \0 & \A_3 & \A_0 \\
\end{smallmatrix}
\right)
x_2 \\
=
\left(
\begin{smallmatrix}
\A_0\xb_2 + \A_1(-\xb_2)  + (\A_2 - \A_3)\xb_2 \\
--------------- \\
\A_3\xb_2 + \A_0(-\xb_2)  + \A_1\xb_2 + \A_2(-\xb_2) \\
\A_3(-\xb_2) + \A_0\xb_2  + \A_1(-\xb_2) + \A_2\xb_2 \\
\A_3\xb_2 + \A_0(-\xb_2)  + \A_1\xb_2 + \A_2(-\xb_2) \\
\A_3(-\xb_2) + \A_0\xb_2  + \A_1(-\xb_2) + \A_2\xb_2 \\
\vdots\\
\A_3\xb_2 + \A_0(-\xb_2)  + \A_1\xb_2 + \A_2(-\xb_2) \\
\A_3(-\xb_2) + \A_0\xb_2  + \A_1(-\xb_2) + \A_2\xb_2 \\
--------------- \\
\A_3(-\xb_2) + (\A_0 + \A_2)\xb_2 + \A_1(-\xb_2) \\
--------------- \\
\A_3(-\xb_2) + \A_0\xb_2  + \A_1(-\xb_2) + \A_2\xb_2 \\
\A_3\xb_2 + \A_0(-\xb_2)  + \A_1\xb_2 + \A_2(-\xb_2) \\
\vdots\\
\A_3(-\xb_2) + \A_0\xb_2  + \A_1(-\xb_2) + \A_2\xb_2 \\
\A_3\xb_2 + \A_0(-\xb_2)  + \A_1\xb_2 + \A_2(-\xb_2) \\
--------------- \\
(-\A_1 + \A_2)\xb_2 + \A_3(-\xb_2)  + \A_0\xb_2
\end{smallmatrix}
\right).
\]

Each entry in the previous vector is $ (\A_0 - \A_1 + \A_2 - \A_3)\xb_2 $ which
is zero. This shows that $ B_2x_2 = 0 $. Since the adjacency matrix of $ G $ was
used to establish $ \operatorname{M}(G) = \operatorname{Z}(G) $, by
\hyperref[cor:M_equal_Z_field_independence]{Corollary
\ref{cor:M_equal_Z_field_independence}}  the graph $ G $ has field independent
minimum rank and $ A(G) $ is universally optimal matrix.
\end{proof}

\section{Concluding comments}
Methods were used to determine the equility of the maximum nullity and zero
forcing number of some graph families over an arbitrary field. Although
equalilty of the maximum nullity and zero forcing number of the Extended Cube
graph was determine over the real numbers, equalilty does not hold in general.
However, the following conjecture may be used to characterize the Extended Cube
graphs for which their maximum nullity and zero forcing number are equal over
an arbitrary field.

\begin{conj}
\label{conj:extended_general_bidiakis_cube_field_independent}
Let $ t \equiv 0,1,2 \bmod 6 $ and $ r $ be an integer which is greater than $
\lfloor t/6 \rfloor $. Then the extended cube graphs $
\operatorname{ECG}(t,6r-t-4) $ has field independent minimum rank and their
adjacency matrices are universally optimal.
\end{conj}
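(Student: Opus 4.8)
The plan is to follow the template of the proof of Theorem~\ref{thm:generalized_bidiakis_cube_m_z}. Write $k = 6r-t-4$ (the condition $r > \lfloor t/6\rfloor$ is exactly what makes $k\ge 0$) and $G = \operatorname{ECG}(t,k)$, so $G$ has $n = 8+2(t+k) = 12r$ vertices. By Corollary~\ref{cor:extended_bidiakis_max_nullity_zero_forcing} we already know $\xi(G) = \operatorname{M}(G) = \operatorname{Z}(G) = 4$, and by Corollary~\ref{cor:M_equal_Z_field_independence} it is enough to show that $A(G)$ itself realizes the maximum nullity, i.e.\ $\operatorname{null}(A(G)) = 4$ over $\mathbb{R}$. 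Since $\operatorname{null}(A(G))\le\operatorname{M}(G)=4$ is automatic, the whole problem reduces to exhibiting a four-dimensional subspace of $\ker A(G)$.

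The new ingredient is the choice of automorphism: when $t\ne k$ the graph $G$ is not of the form $\operatorname{ECG}(s,s)$, so the order-$4$ ``quarter rotation'' of Observation~\ref{obs:bidiakis_cube_automorphism} does not apply. Instead I would use the rotation $\rho$ of the Hamilton cycle of $G$ by $n/2 = 6r$ vertices. Inspecting the four original chords $\{0,5\},\{1,4\},\{2,7\},\{3,6\}$ and the rungs shows that $\rho$ is a uniform automorphism of $\operatorname{ECG}(t,k)$ for every $t,k$: it has orbit size $2$, interchanges the two vertical runs, interchanges the two horizontal runs, and acts on the cube vertices by $0\leftrightarrow 4$, $1\leftrightarrow 5$, $2\leftrightarrow 6$, $3\leftrightarrow 7$. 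Applying Theorem~\ref{thm:equitable_decomposition_of_graphs} with $\phi=\rho$, orbit size $2$, $\omega=-1$, and $T_0$ one half of the Hamilton cycle produces a real decomposition $A(G)\sim B_0\oplus B_1$ with $B_0 = A_0+A_1$, $B_1 = A_0 - A_1$, each of order $6r$; hence $\operatorname{null}(A(G)) = \operatorname{null}(B_0)+\operatorname{null}(B_1)$ and it suffices to find two null vectors of $B_0$ and two of $B_1$.

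A null vector of $B_0$ is the $T_0$-part of a $\rho$-symmetric vector $x$ with $A(G)x=0$; substituting $x_{b_i}=x_{a_i}$ on the rungs turns the equations of $A(G)x=0$ along each subdivision run into the recurrence $x_{j-1}+x_j+x_{j+1}=0$, whose solutions are periodic with period $3$, while the eight vertex equations at $0,1,\dots,7$ collapse (after the identifications $x_4=x_0$, $x_5=x_1$, $x_6=x_2$, $x_7=x_3$) to four equations in $x_0,x_1,x_2,x_3$. Similarly a null vector of $B_1$ corresponds to a $\rho$-antisymmetric null vector, along whose runs one gets $x_{j-1}-x_j+x_{j+1}=0$ (period $6$, with real solutions), with the analogous four junction equations. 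Propagating the periodic pattern along the four runs, the resulting linear system has rank depending on $t,k$ only through $t\bmod 3$ and $k\bmod 3$ for $B_0$, and $t\bmod 6$ and $k\bmod 6$ for $B_1$; the hypothesis $t\equiv 0,1,2\pmod 6$ forces $(t\bmod 6,\,k\bmod 6)\in\{(0,2),(1,1),(2,0)\}$, and the heart of the argument is that in exactly these residue classes each system carries a solution space of dimension at least $2$. One then records the two null vectors explicitly, generalizing the vectors $\tilde x_0 = [1,-2,1]^T$, $\tilde x_2 = [1,0,-1]^T$ and $\tilde x_1,\tilde x_3$ of Example~\ref{ex:bidiakis_cube} by repeating them along the runs with the appropriate period, just as the block vectors $x_0,x_1,x_2$ were assembled in the proof of Theorem~\ref{thm:generalized_bidiakis_cube_m_z}. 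With $\operatorname{null}(A(G)) = 4 = \operatorname{Z}(G)$ in hand, Corollary~\ref{cor:M_equal_Z_field_independence} (taking $\lambda=0$) gives field independence of the minimum rank and universal optimality of $A(G)$.

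The hard part is the last step: showing, in each of the three residue cases, that the junction/chord equations together with the run recurrences (which can themselves be singular when a run length is $\equiv 2\pmod 3$, a phenomenon that supplies localized null vectors) leave a solution space of dimension at least $2$, and writing the null vectors in closed form. This is a finite but delicate computation: essentially one must check that a period-$3$ (resp.\ period-$6$) pattern can be made to close up consistently around the cycle in spite of the phase shifts imposed at the eight cube vertices and the four chords. It is also the step that necessarily uses the residue hypothesis: for $t\equiv 3,4,5\pmod 6$ one expects $\operatorname{null}(A(G))<4$, consistent with Example~\ref{ex:extende_cube_not_field_indenpendent}, where $\operatorname{ECG}(0,3)\cong C_7\square P_2$ is shown not to have field independent minimum rank. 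A minor technical caution is that, because the order-$2$ automorphism gives $\omega=-1$, the whole calculation (including the period-$6$ antisymmetric part) can and should be carried out over $\mathbb{R}$, with no passage to $\mathbb{C}$.
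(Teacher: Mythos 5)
This statement is a \emph{conjecture} in the paper: the author gives no proof, only computational verification for $r\in\{2,\dots,12\}$ and $t\in\{0,1,2,6,7,8\}$, together with a proof of the single subfamily $\operatorname{ECG}(6q+1,6q+1)$ (Theorem~\ref{thm:generalized_bidiakis_cube_m_z}). So there is no proof in the paper to compare against, and what you have written is in any case a plan rather than a proof. Your overall architecture is the right one and matches the paper's treatment of the special case: reduce everything to showing $\operatorname{null}(A(G))=4$ over $\mathbb{R}$, then invoke Corollary~\ref{cor:extended_bidiakis_max_nullity_zero_forcing} and Corollary~\ref{cor:M_equal_Z_field_independence}. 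Your observation that the half-turn $\rho(x)=x+n/2$ is a uniform order-$2$ automorphism of $\operatorname{ECG}(t,k)$ for \emph{all} $t,k$ is correct and genuinely useful (one checks that $\rho$ sends the chord $\{i,\,t+2k+5-i\}$ to the chord $\{k+1-i,\,t+k+4+i\}$, and similarly for the horizontal chords), and it is a sensible replacement for the order-$4$ rotation of Observation~\ref{obs:bidiakis_cube_automorphism}, which is only available when $t=k$.

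However, the proposal has two genuine gaps. First, the entire mathematical content of the conjecture --- that each block $B_0$, $B_1$ of the decomposition has nullity at least $2$ exactly when $t\equiv 0,1,2\pmod 6$ --- is deferred to ``a finite but delicate computation'' that you never perform; nothing in the proposal establishes it, so no proof has been given. Second, the reduction you do describe is incorrect. A $\rho$-symmetric vector satisfies $x_{\rho(v)}=x_v$, but $\rho$ does \emph{not} map a rung endpoint to its partner across the rung: the partner of the top-run vertex $i$ is $t+2k+5-i$, whereas $\rho(i)=i+t+k+4$, and these coincide only for $i=(k+1)/2$. Consequently $x_{t+2k+5-i}=x_{k+1-i}$, and the kernel equation at an interior run vertex becomes the reflection-coupled system $x_{i-1}+x_{i+1}+x_{k+1-i}=0$, not the three-term recurrence $x_{i-1}+x_i+x_{i+1}=0$. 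The period-$3$ / period-$6$ propagation argument, and hence the identification of the admissible residue classes, does not go through as stated; you would need either a different automorphism (the natural reflections have fixed points or fail to exchange the two horizontal runs) or a direct analysis of the coupled system.
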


Using computational software, $ \operatorname{null}(A(
\operatorname{ECG}(t,6r-t-4) )) = 4 = \operatorname{Z}(
\operatorname{ECG}(t,6r-t-4) ) $ for every $ r \in \{ 2,\dots,12 \}$ when $ t
\in \{ 0,1,2,6,7,8 \}$ .  Note that in
\hyperref[conj:extended_general_bidiakis_cube_field_independent]{Conjecture
\ref{conj:extended_general_bidiakis_cube_field_independent}} when $ t= 6q + 1 $
for some nonnegative integer $ q $ it is the case that $ t \equiv 1 \bmod 6 $.
Moreover,  when $ r  = 2q + 1$, $ 6r-t-4 = 6q+1 $, and $ r > \lfloor t/6
\rfloor = \lfloor \tfrac{6q+1}{6} \rfloor = q  $. In this case, $
\operatorname{ECG}(t,6r-t-4) $ is the same as the graph $
\operatorname{ECG}(6q+1,6q+1) $ as in
\hyperref[thm:generalized_bidiakis_cube_m_z]{Theorem
\ref{thm:generalized_bidiakis_cube_m_z}}.  Also, $ t \equiv 1 \bmod 6 $ implies
$ 6r-t-4 \equiv 1 \bmod 6 $, $ t \equiv 0 \bmod 6 $ implies $ 6r-t-4 \equiv 2
\bmod 6 $, and $ t \equiv 2 \bmod 6 $ implies $ 6r-t-4 \equiv 0 \bmod 6 $.

\bibliographystyle{abbrv}
\bibliography{refs}
\end{document}